\documentclass[11pt]{article}
\usepackage{srcltx}
\usepackage{eurosym}
\usepackage{mathtools}
\usepackage{amsmath}
\usepackage{amsfonts}
\usepackage{amssymb}
\usepackage{amsthm}
\usepackage{comment}
\usepackage{tensor} 

\allowdisplaybreaks

\usepackage{graphicx}
\usepackage{amsmath} 
\DeclareMathOperator{\sgn}{sgn}
\usepackage{mathrsfs}
\usepackage{xcolor}
\usepackage{exscale}
\usepackage{latexsym}
\usepackage{authblk}
\usepackage{booktabs}
\newtheorem*{assumption}{Assumption $\mathcal{A}$}
\usepackage[colorlinks,plainpages=true,pdfpagelabels,hypertexnames=true,colorlinks=true,pdfstartview=FitV,linkcolor=blue,citecolor=red,urlcolor=black]{hyperref}
\PassOptionsToPackage{unicode}{hyperref}
\PassOptionsToPackage{naturalnames}{hyperref}
\usepackage{enumerate}
\usepackage[shortlabels]{enumitem}
\usepackage{bookmark}
\usepackage{wasysym}
\usepackage[ddmmyyyy]{datetime}
\usepackage[margin=0.9in]{geometry}
\parskip = 0.00in
\headsep=0.10in
\makeatletter
\g@addto@macro\normalsize{%
	\setlength\abovedisplayskip{4pt}
	\setlength\belowdisplayskip{4pt}
	\setlength\abovedisplayshortskip{4pt}
	\setlength\belowdisplayshortskip{4pt}
}
\numberwithin{equation}{section}
\everymath{\displaystyle}
\usepackage[capitalize,nameinlink]{cleveref}
\crefname{section}{Section}{Sections}
\crefname{subsection}{Subsection}{Subsections}
\crefname{condition}{Condition}{Conditions}
\crefname{hypothesis}{Hypothesis}{Conditions}
\crefname{lemma}{Lemma}{Lemmas}
\crefname{definition}{Definition}{Definitions}

\crefformat{equation}{\textup{#2(#1)#3}}
\crefrangeformat{equation}{\textup{#3(#1)#4--#5(#2)#6}}
\crefmultiformat{equation}{\textup{#2(#1)#3}}{ and \textup{#2(#1)#3}}
{, \textup{#2(#1)#3}}{, and \textup{#2(#1)#3}}
\crefrangemultiformat{equation}{\textup{#3(#1)#4--#5(#2)#6}}%
{ and \textup{#3(#1)#4--#5(#2)#6}}{, \textup{#3(#1)#4--#5(#2)#6}}%
{, and \textup{#3(#1)#4--#5(#2)#6}}

\Crefformat{equation}{#2Equation~\textup{(#1)}#3}
\Crefrangeformat{equation}{Equations~\textup{#3(#1)#4--#5(#2)#6}}
\Crefmultiformat{equation}{Equations~\textup{#2(#1)#3}}{ and \textup{#2(#1)#3}}
{, \textup{#2(#1)#3}}{, and \textup{#2(#1)#3}}
\Crefrangemultiformat{equation}{Equations~\textup{#3(#1)#4--#5(#2)#6}}%
{ and \textup{#3(#1)#4--#5(#2)#6}}{, \textup{#3(#1)#4--#5(#2)#6}}%
{, and \textup{#3(#1)#4--#5(#2)#6}}

\crefdefaultlabelformat{#2\textup{#1}#3}

\newtheorem{theorem} {Theorem}[section]
\newtheorem{proposition} [theorem]{Proposition}

\newtheorem{lemma}[theorem]{Lemma}

\newtheorem{counter example}[theorem]{Counter Example}
\newtheorem{remark}[theorem] {Remark}
\newtheorem{definition}[theorem] {Definition}





\def\CC{{\rm \kern.24em \vrule width.02em height1.4ex depth-.05ex \kern-.26emC}}

\def\TagOnRight

\def\AA{{it I} \hskip-3pt{\tt A}}

\def\QQ{\rlap {\raise 0.4ex \hbox{$\scriptscriptstyle |$}} {\hskip -0.1em Q}}


\makeatletter
\newcommand{\vo}{\vec{o}\@ifnextchar{^}{\,}{}}
\makeatother

\def\YYint#1#2#3{{\setbox0=\hbox{$#1{#2#3}{\iint}$}
		\vcenter{\hbox{$#2#3$}}\kern-.50\wd0}}


\def\XXint#1#2#3{{\setbox0=\hbox{$#1{#2#3}{\int}$}
		\vcenter{\hbox{$#2#3$}}\kern-.50\wd0}}

\makeatletter
\def\namedlabel#1#2{\begingroup
	\def\@currentlabel{#2}%
	\label{#1}\endgroup
}
\makeatother
\makeatletter
\newcommand{\rmh}[1]{\mathpalette{\raisem@th{#1}}}
\newcommand{\raisem@th}[3]{\hspace*{-1pt}\raisebox{#1}{$#2#3$}}
\makeatother


\newcounter{desccount}

\newcommand{\descref}[2]{\hyperref[#1]{\textnormal{\textcolor{black}{}\textcolor{blue}{ #2}\textcolor{black}{}}}}

\newcommand{\dref}[2]{\hyperref[#1]{\textcolor{black}{(}\textcolor{blue}{\bf #2}\textcolor{black}{)}}}
\newcommand{\be} {\begin{eqnarray}}
	\newcommand{\ee} {\end{eqnarray}}
\newcommand{\Bea} {\begin{eqnarray*}}
	\newcommand{\Eea} {\end{eqnarray*}}

\DeclareMathOperator{\lip}{{\rm Lip}}


\newcommand{\abs}[1]{\left| #1\right|}






\newcounter{whitney}
\refstepcounter{whitney}

\newcounter{ineqcounter}
\refstepcounter{ineqcounter}
\makeatletter
\def\ps@pprintTitle{%
	\let\@oddhead\@empty
	\let\@evenhead\@empty
	\def\@oddfoot{}%
	\let\@evenfoot\@oddfoot}
\makeatother
\usepackage[doublespacing]{setspace}
\usepackage[titletoc,toc,page]{appendix}


\makeatletter
\newcommand{\refcheckize}[1]{%
	\expandafter\let\csname @@\string#1\endcsname#1%
	\expandafter\DeclareRobustCommand\csname relax\string#1\endcsname[1]{%
		\csname @@\string#1\endcsname{##1}\wrtusdrf{##1}}%
	\expandafter\let\expandafter#1\csname relax\string#1\endcsname
}
\makeatother

\refcheckize{\cref}
\refcheckize{\Cref}

\makeatletter
\makeatother

%
\makeatletter
\newcommand{\mainsectionstyle}{%
	\renewcommand{\@secnumfont}{\bfseries}
	\renewcommand\section{\@startsection{section}{2}%
		\z@{.5\linespacing\@plus.7\linespacing}{-.5em}%
		{\normalfont\bfseries}}%
}
\makeatother
\usepackage{pgf,tikz}
\usetikzlibrary{arrows}
\usetikzlibrary{decorations.pathreplacing}

\usepackage{xpatch}
\xpatchcmd{\MaketitleBox}{\hrule}{}{}{}
\xpatchcmd{\MaketitleBox}{\hrule}{}{}{}

\linespread{1}
\date{}


\usepackage{scalerel}


\makeatletter

\usepackage{subcaption}
\usepackage{hyperref}
\usepackage{caption}
\usepackage{subcaption}

\usepackage[utf8]{inputenc}
\allowdisplaybreaks
\usepackage{amsmath}
\usepackage{graphicx}
\usepackage{mathtools} 
\usepackage{hyperref}       
\usepackage{url}      
\usepackage{mathrsfs}
\usepackage{graphicx}
\usepackage{array}
\usepackage{color} 
\usepackage{tabularx}
\usepackage{amsmath,mathdots,amsthm}
\usepackage{amssymb}
\usepackage{amsfonts}
\usepackage{xcolor}
\usepackage{bm}
\usepackage{soul}
\usepackage{float}
\usepackage{cite}

\hypersetup{
	colorlinks=true,                          
	linkcolor=blue, 
	citecolor=blue, 
	urlcolor=black  
} 

\newtheorem{thm}{Theorem}[section]

\newtheorem{Lemma}[thm]{Lemma} 
\newtheorem{ass}[thm]{Assumption}

 \numberwithin{equation}{section}

\numberwithin{equation}{section}

\allowdisplaybreaks
\newcommand{\nrm}[1]{ \left\vert\left\vert #1 \right\vert\right\vert }

\newcommand{\pd}[2]{\frac{\partial #1}{\partial #2}}
\newcommand{\prn}[1]{\left( #1 \right)}

\newcommand{\I}{\mathbf{I}}
\newcommand{\Q}{\mathbf{Q}}
\newcommand{\hlf}{\frac{1}{2}}

\newcommand{\B}{\mathbf{B}}
\newcommand{\bS}{\mathbf{S}}
\newcommand{\Hess}[1]{\mathbf{H}_{#1}}

\newcommand{\rel}{^{\alpha}}
\newcommand{\intR}[1]{\int_\mathbb{R} #1 \, dx}
\newcommand{\eps}{\varepsilon}
\newcommand{\setR}{{\mathbb R}}

\newcommand{\dx}{\Delta x}
\newcommand{\dt}{\Delta t}
\newcommand{\F}{\mathbf{f}}


\newcommand{\red}[1]{\textcolor[rgb]{0.95,0.20,0.20}{#1}}
\newcommand{\pdt}[1]{\frac{\partial #1}{\partial t}}

\title{On hyperbolic approximations for a class of dispersive and diffusive-dispersive equations}
\author[1]{Rahul Barthwal\thanks{\href{mailto:rahul.barthwal@mathematik.uni-stuttgart.de}{rahul.barthwal@mathematik.uni-stuttgart.de}}}
\author[2]{Firas Dhaouadi\thanks{\href{mailto:firas.dhaouadi@bordeaux-inp.fr}{firas.dhaouadi@bordeaux-inp.fr}}}
\author[1]{Christian Rohde\thanks{\href{mailto:christian.rohde@mathematik.uni-stuttgart.de}{christian.rohde@mathematik.uni-stuttgart.de}}}

\affil[1]{\footnotesize Institute of Applied Analysis and Numerical Simulation, University of Stuttgart\\

Pfaffenwaldring 57, D-70569 Stuttgart, Germany}
\affil[2]{\footnotesize Institut de Mathématiques de Bordeaux, Université de Bordeaux, CNRS UMR 5251\\

Bordeaux INP, INRIA, F-33400, Talence, France}
\date{}


\hypersetup{
pdftitle={},
pdfauthor={},
pdfkeywords={},
}

\begin{document}
\maketitle
\begin{abstract}
We introduce novel approximate systems for dispersive and diffusive-dispersive equations with nonlinear fluxes. For purely dispersive equations, we construct a  first-order, strictly hyperbolic approximation.   
Local well-posedness of smooth solutions is achieved by constructing a unique symmetrizer that applies to arbitrary smooth fluxes. Under stronger conditions on the fluxes, we provide a strictly convex entropy for the hyperbolic system that corresponds to the energy of the underlying dispersive equation.
To approximate diffusive-dispersive equations, we rely on a viscoelastic damped system that is compatible with the found entropy for the hyperbolic approximation of the dispersive evolution. For the resulting hyperbolic-parabolic approximation, we provide a global well-posedness result. 
Using the relative entropy framework \cite{dafermos2005hyperbolic}, we prove that the solutions of the approximate systems converge to solutions of the original equations.\\
The structure of the new approximate systems allows to apply standard numerical simulation methods from the field of hyperbolic balance laws. We confirm the convergence of our approximations even beyond the validity range of our theoretical findings on set of test cases covering different target equations. We show the applicability of the approach for strong nonlinear effects leading to oscillating or shock-layer-forming behavior.

\end{abstract}
{\textbf{Keywords.} Hyperbolic balance laws, dispersive and diffusive-dispersive equations, relative entropy, generalized KdV equations, non-classical shock waves.}\\\\
\medskip 
{\textbf{Mathematics subject classification.}  35A35, 35A01, 35G20, 35L45, 65M08.

\section{Introduction}
We consider for some interval $I \subset \setR$   scalar    balance laws of the form
\begin{equation}\label{main}
    u_t + {{f(u)}}_x = \varepsilon u_{xx}+  \gamma u_{xxx}~~{\rm{in}}~I_T:=I\times (0, T), \, ~T>0,
\end{equation}
with associated initial data 
\begin{align}\label{initialdata}
u(\cdot, 0)=u_0.
\end{align}
In \eqref{main}, $u=u(x,t)\in\mathbb{R}$ is the unknown. $f:\mathbb{R}\rightarrow \mathbb{R}$ denotes  a smooth flux function, $\gamma$ is a nonzero real parameter, $u_0:I\to \setR$ indicates the initial function.
We consider both cases, i.e.,  $\varepsilon=0$ leading to a purely dispersive evolution and the diffusive-dispersive equation with $\varepsilon>0$.\\
 Many mathematical models for nonlinear wave propagation in the low-frequency regimes lead to \eqref{main} with $\eps =0$. 
They have been analyzed extensively and we can only mention some examples: The model \eqref{main} represents the Korteweg-De Vries (KdV) equation for $f(u)=u^2/2$ \cite{whitham1965non}, the modified Korteweg De-Vries (mKdV) equation for $f(u)=u^3$ \cite{shearer2015traveling}, the generalized KdV equation for $f(u)=u^p+u, ~p\in \mathbb{N}$ \cite{bona2022numerical}, and the  Gardner equation for $f(u)=u^3+\mu u^2$ with $\mu\in \mathbb{R}$\cite{kamchatnov2012undular}.
The model \eqref{main} with $\eps >0$ can be considered as a toy model to describe phase transition dynamics when viscous and capillary effects are taken into account \cite{shearer2015traveling}. 
We refer the interested readers to the review paper\cite{el2017dispersive} for more details on dispersive and diffusive-dispersive equations and their physical significance; see also \cite{whitham2011linear, whitham1965non}.

Apart from its significance in physics, the model \eqref{main} comes with several mathematical challenges. In particular, detecting non-classical shock waves, such as nonlinear dispersive shocks or undercompressive shocks due to the coupling of the third-order term with the flux function, has gained attention, see, e.g., \cite{hayes1997non, hayes1999undercompressive, lefloch2004non}. In recent years, a lot of crucial work has been done for the analysis of diffusive-dispersive scalar balance laws of the form \eqref{main}. For the cubic-flux case $f(u)=u^3$ (the mKdV–Burgers model), the existence and structure of undercompressive traveling waves were rigorously characterized, with subsequent studies addressing stability; see, e.g., \cite{jacobs1995traveling, el2017dispersive, schulze1999undercompressive, shearer2015traveling} and references therein. Later, undercompressive dynamics in certain diffusive-dispersive systems were observed  numerically, offering strong confirmation and additional understanding of wave selection mechanisms \cite{bertozzi1998contact, Chalonsetal}. On the dispersive side, the broader theory of dispersive shock waves offers a unifying modulation framework for oscillatory shock-like structures generated by equations in the KdV/mKdV family and their viscous perturbations; see \cite{el2016dispersive, el2017dispersive}.\\
Notably, the equation \eqref{main}  comes with an energy-like structure. Sufficiently smooth solutions of \eqref{main} satisfy 
\begin{equation}\label{energylaw}
\frac{d}{dt} E [u] + \dfrac{d}{dx}Q[u]
=- \varepsilon \left(\sgn(\gamma) f'(u) u_x^2+|\gamma| u_{xx}^2\right),\,
\end{equation}
with  the  energy $E[u]$ and the energy flux $Q[u]$ given  by  
\begin{equation}\label{energy}
\begin{array}{rcl}
    E[u]&=& \sgn(\gamma) F(u)+\dfrac{|\gamma|  u_x^2}{2}, \quad F'(u)=f(u),\\[1.2ex]
    Q[u]&=& \dfrac{f(u)^2}{2}+|\gamma| (u_x f(u)_x-f(u)u_{xx})-\eps \sgn(\gamma) u_xf(u)-\eps |\gamma| u_x u_{xx}\\
    & &{} +\dfrac{\sgn(\gamma) \gamma^2 u_{xx}^2}{2}-\sgn(\gamma)\gamma^2 u_x u_{xxx}.
\end{array}
\end{equation}
If $\sgn(\gamma) F(u)$ is strictly convex or in other words $\sgn(\gamma)f'(u)\geq \delta$ for some $\delta>0$, then the energy $E$ becomes a strictly convex Lyapunov functional and \eqref{energylaw} becomes an energy inequality, which then provides some $H^1$-control on $u$. In particular, for the dispersive case ($\eps=0$), the energy $E$ is conserved. Relying on this energy structure and higher-order energy estimates, classical results for (generalized) KdV/mKdV establish local and global well-posedness in $ {H}^{s}$-spaces, which provide the analytical foundation for studying solutions emanating from sufficiently regular data; see, e.g., \cite{kenig1991well, tao2006nonlinear, killip2019kdv, Colliander2003, farahkdv2011} and references cited therein. \\
Beyond direct analysis of \eqref{main}, there have been attempts to provide lower-order approximations for \eqref{main}. Such relaxations are typically tailored to specific examples of \eqref{main}  so as to preserve key structural features (e.g., energy preservation or dissipation) while simplifying the dynamics. For the diffusive–dispersive law \eqref{main} with $\gamma>0$, Corli\&Rohde \cite{corli2012singular} proposed an elliptic relaxation in which an auxiliary variable is introduced and coupled to a reduced-order evolution through a screened Poisson equation. Their approximation takes the form.
\begin{equation}
    u\rel_{t} + f(u\rel)_x = \varepsilon\, u\rel_{xx} - \alpha \prn{u\rel - c\rel}_x, 
    \quad -\gamma  c\rel_{xx} = \alpha \prn{u\rel - c\rel}.
    \label{eq:scrn_poiss}
\end{equation}
Relying on the energy structure of the approximate system, it was proven  that as $\alpha\rightarrow \infty$, the sequence of solutions ${\{(u\rel, c\rel)\}}_{\alpha>0}$ of the initial value problem for \eqref{eq:scrn_poiss}
converge to some function $(u, c)$, such that $u$ solves \eqref{main}, \eqref{initialdata}.
Extending this idea, Corli et al.~\cite{corli2014parabolic} introduced a parabolic relaxation by adding temporal dynamics for the auxiliary variable. The parabolic relaxation system for $\gamma>0$ takes the form.
\begin{equation}
    u\rel_{t} + f(u\rel)_x = \varepsilon\, u\rel_{xx} - \alpha \prn{u\rel - c\rel}_x, \quad 
    \beta c\rel_t-\gamma \,c\rel_{xx} = \alpha \prn{u\rel - c\rel},
    \label{eq:relaxed_scrn_heat}
\end{equation}
where the parameter $\beta$ is related with the relaxation parameter $\alpha$ such that $\beta(\alpha)\rightarrow 0$ as $\alpha\rightarrow \infty$. By higher-order energy estimates,  convergence of the sequence of solutions ${\{(u\rel, c\rel)\}}_{\alpha>0}$ of \eqref{eq:relaxed_scrn_heat} to the solutions of \eqref{main}, \eqref{initialdata} has been established as $\alpha \rightarrow \infty$. While both elliptic and parabolic approximations capture the diffusive–dispersive behavior effectively, they still result in 
second-order and mixed-type systems. These 
cannot be treated by standard numerical methods. In particular, the numerical simulation becomes demanding in the stiff limit 
regime for $\alpha \to \infty$.
On the other hand,  first-order hyperbolic or second-order hyperbolic-parabolic systems admit a broad class of high-resolution methods (like, e.g., finite volume methods with classical Riemann solvers and limiters), making them straightforward to implement and comparatively inexpensive; see, e.g.,~\cite{dhaouadi2022first, dhaouadi2025first}. 
\medskip

In this article, therefore, we propose alternative hyperbolic and hyperbolic–parabolic systems that approximate solutions of 
\eqref{main}, \eqref{initialdata} while retaining essential structural features and enabling efficient computation. The proposed approximations
also rely on the introduction of an auxiliary variable, like in \eqref{eq:scrn_poiss}, \eqref{eq:relaxed_scrn_heat}, but let it evolve by special wave equations. However, our proposed system does not rely on the sign of dispersion in contrast to the elliptic and parabolic relaxation systems and considers both the cases $\gamma>0$ and $\gamma<0$.\\
In Section \ref{sec: model}, we first formulate the suggested approximation models for the purely dispersive case $\eps =0$ in \eqref{main}.  By transformation of the unknowns, we can recast 
the approximation in the form of a first-order $(4\times 4)$-system of balance laws; see \eqref{hyperbolic_system}}. This system turns out to be strictly hyperbolic, admits explicit Riemann invariants, and preserves the Hamiltonian structure of \eqref{main}. For monotone fluxes, we can provide a convex entropy which resembles the energy structure of \eqref{main} as in \eqref{energylaw}. 
In the second part of Section \ref{sec: model}, we provide a hyperbolic-parabolic approximation in \eqref{hyperbolic_parabolic_system} using a damped wave operator. 
Besides local well-posedness, we establish a global well-posedness result
for initial data close to constant equilibrium in Theorem \ref{global_wellposedness}. Again, we exploit the entropy structure which is inherited from the hyperbolic approximation \eqref{hyperbolic_system} and ensures dissipative behavior.  
The main novel contributions of Section \ref{sec: model} are the two hyperbolic and hyperbolic-parabolic approximations \eqref{hyperbolic_system} and \eqref{hyperbolic_parabolic_system}.\\
Section \ref{sec: convergence} is devoted to prove the convergence of the solutions of the proposed systems to solutions of \eqref{main} using the relative entropy framework. As our main analytical contributions, we prove in  Theorem \ref{relative_entropy_theorem_1}  the convergence towards solutions of \eqref{main} in the purely dispersive case and verify in Theorem \ref{theo:dd}  the convergence in the diffusive-dispersive case. Let us note that we  consider in Theorem \ref{relative_entropy_theorem_1} weak entropy solutions of the 
approximate system \eqref{hyperbolic_system}, whereas Theorem \ref{theo:dd} concerns classical solutions of \eqref{hyperbolic_parabolic_system} which exist  globally under the assumptions of Theorem \ref{global_wellposedness}.\\
In Section \ref{sec: Numerics}, we provide several test cases for a variety of dispersive and diffusive-dispersive equations. We show that 
the approximations \eqref{hyperbolic_system} and \eqref{hyperbolic_parabolic_system} can be solved numerically by a standard approach from the realm of hyperbolic balance laws. The method applies to all test cases, including those with strong nonlinear effects. The study confirms the theoretical convergence results from Section \ref{sec: convergence}.\\
Conclusions and future outlooks are provided in Section \ref{sec: conclusions}.
\medskip
%

%
%

We conclude the introduction with some remarks on the available literature on first-order approximations for diffusive-dispersive equations. Such models have long been studied in diverse settings. In particular, many hyperbolic relaxation models for higher-order partial differential equations have been suggested in recent years, see, e.g., \cite{biswas2025traveling, dhaouadi2025first, favrie2017rapid, gavrilyuk2022hyperbolic, gavrilyuk2024conduit,ranocha2025high} and references cited therein. Numerical investigations in all these works suggest a good convergence towards the solutions of the original equations, but mostly lack a rigorous convergence analysis or do not exploit or rely on the energy structure of the original problem. In particular, a rigorous justification of the Favrie-Gavrilyuk 
approximation \cite{favrie2017rapid} to the Serre-Green-Naghdi model was provided in \cite{duchene2019rigorous}; however, most of the other relaxation systems still lack rigorous convergence proofs. A more recent contribution by Giesselmann\&Ranocha \cite{giesselmann2025convergence} addresses this gap by proving the convergence for several hyperbolic relaxation systems via the relative entropy framework. Building on this idea, we show that the same approach applies to our proposed approximate system \eqref{hyperbolic_system} using its convex entropy. 
We refer the interested readers to \cite{giesselmann2017relative, dafermos2005hyperbolic, diperna1979uniqueness} and references cited therein for more details on the relative entropy framework. 
\subsection*{Notations}\label{Notation}
Throughout the article, we use the following standard notations. We denote by $C^k(D)$, the space of $k$-times continuously differentiable functions defined in some domain $D$ and $k\in \mathbb{N} \cup \{ \infty \}$. By $C^{\lip}_0(D)$, we denote the space of Lipschitz continuous functions having a compact support. 
Further, for $p \in [1,\infty) \cup \{\infty\}$, we denote the Lebesgue space on $D$ as $L^p(D)$ and the $L^p(D)$-norm as ${\lVert \cdot \rVert}_{L^p(D)}$, which for a function $g \in L^p(D)$ is defined as
\[
    {\lVert g\rVert}_{L^p(D)} = \biggl(\int_{D} |g(x)|^p \,dx\biggr)^{\frac{1}{p}} ~ \text{for}~p \in[1,\infty), \quad 
{\lVert g \rVert}_{L^{\infty}(D)}
    = {\rm esssup}_{x\in D} \big\{ |g| \big\}  ~\text{for}~ p =\infty.\]
We denote by $H^k(D)$ the Sobolev space of order $k$ consisting of 
square-integrable functions whose weak derivatives up to order $k$ are also square-integrable, that is
\[
    H^k(D)
    := \bigl\{ g \in {L}^{2}(D) : \partial^m g \in {L}^{2}(D)
        \text{ for all multi-indices}~ m \text{ with } |m| \le k \bigr\},
\]
equipped with the norm
\[
    {\lVert g \rVert}_{H^k(D)}^2
    := \sum_{|m| \le k} {\lVert \partial^m g \rVert}_{{L}^{2}(D)}^2,
\]
where $\partial^m g$ denotes the weak derivative of $g$ of order $m$. \\
Finally, for a Banach space $X$ and $T>0$, we denote by $L^p(0, T;X)$, the Bochner space of measurable functions $g:(0,T)\to X$ with an induced norm
\[
    {\lVert g \rVert}_{L^p(0,T;X)}
    = \biggl( \int_0^T {\lVert u(t) \rVert}_{X}^p \,\mathrm{d}t \biggr)^{1/p},
\]
for $p \in [1,\infty)$, while for $p = \infty$
\[
{\lVert g \rVert}_{L^{\infty}(0,T;X)}
    = \sup_{t\in[0, T]} {\lVert g(t) \rVert}_{X}.
\]

\section{Hyperbolic and hyperbolic-parabolic approximations\label{sec: model}}
In this section, we propose the relaxation approximations for both the purely  dispersive equation \eqref{main} with $\eps=0$ in Section \ref{sec:main1} 
and the diffusive-dispersive equation \eqref{main} with $\eps >0$ in Section \ref{sec:main2}.
The approximations can be seen as extensions of the elliptic approach from 
\cite{corli2012singular}, but replacing the elliptic operator by different types of second-order hyperbolic wave operators. These will be reformulated
by introducing first- and second-order systems, which are shown to be of hyperbolic and hyperbolic-parabolic structure, respectively. In the dispersive case, we show that the proposed system is strictly hyperbolic and is locally well-posed via a unique symmetrizer. Moreover, we succeed in showing that the first-order system is derivable from Hamilton's principle and conserves an associated entropy. The approach is then extended to the dissipative case, for which we succeed in providing a complete entropy framework (see \cite{dafermos2005hyperbolic}) that is compatible with the energy related to the limit problem \eqref{main}, \eqref{initialdata}. We exploit the entropy structure and prove local and global well-posedness results for the corresponding initial value problems.

\subsection{A first-order approximation of the purely dispersive equation\label{sec:main1}}

We start from \eqref{main} with $\eps =0$.  Thus, we consider for $\gamma \in \setR\setminus \{0\}$, the dispersive equation
\begin{equation}
	u_t + 
    {f(u)}_x = \gamma u_{xxx},~~\text{in}~I_T,
	\label{eq:dispersive}
\end{equation}
subject to periodic boundary conditions and the 
initial condition
\begin{equation}
	u(\cdot,0) = u_0 \mbox{ in } I.
	\label{eq:dispersive_ini}
\end{equation}
The existence of classical solutions for such dispersive equations with periodic boundary conditions has been analyzed in various settings. In particular, for the generalized KdV equation, local and global well-posedness for high regularity solutions in $H^s$-spaces is a well-established result. We refer the interested readers to the classical work of Kenig et al. \cite{kenig1991well} in this regard, where the local well-posedness of $H^s$ solutions for $s>3/4$ was developed. For global well-posedness of smooth solutions in $H^s$-spaces for modified and generalized KdV equations in both $\mathbb{R}$ and $\mathbb{T}$ (periodic BCs), we refer the interested readers to Colliander et al. \cite{Colliander2003}, Farah et al. \cite{farahkdv2011}, and references therein. For a more general flux function and variable dispersion, we refer to the recent work of Mietka \cite{mietka2017well}. In particular, for results related to the periodic setting, we also refer to \cite{staffilani1997solutions, guan2014kdv}.\medskip \\
As outlined in the introduction, an important property of 
classical solutions of \eqref{eq:dispersive} is the energy conservation such that for the energy/energy flux pair $(E,Q)$ from  \eqref{energy}, we have the relation
\begin{equation}\label{gradientflow}
\dfrac{d}{dt}{E[u]}+ \dfrac{d}{dx}{Q[u]}=0.
\end{equation}
We search now for a first-order approximation of \eqref{eq:dispersive} that resembles
this gradient flow structure. 
Analogously to the screened Poisson equation \eqref{eq:scrn_poiss} and the screened heat equation \eqref{eq:relaxed_scrn_heat}, we relax the Laplacian term $u_{xx}$ in \eqref{eq:dispersive} by a screened linear wave operator. 
For some relaxation parameter $\alpha >0$,  we  seek then for a pair 
$(u\rel,c\rel)  $  that satisfies  in $I_T$ the approximate system   
\begin{equation}\label{eq:waveform}
	\begin{array}{rcl} 
		u\rel_t + f(u\rel)_x &= & - \sgn(\gamma)\alpha \prn{u\rel - c\rel}_x, \\[1.1ex]
		\beta c\rel_{tt}-|\gamma| c\rel_{xx} &= & 
        \alpha \prn{u\rel - c\rel},
	\end{array}
\end{equation}	
subject to  periodic boundary conditions, and the prepared initial data 
\begin{equation}\label{eq:wave_equation_ini}  
u\rel (\cdot,0) = c\rel(\cdot,0) =  u_0, \quad c\rel_t(\cdot,0) =    - {f(u_0)}_x - |\gamma| u_{0,xxx}, \quad \alpha(u\rel-c\rel) (\cdot,0)= - |\gamma| u_{0,xx} \quad \mbox{ in } I. 
\end{equation}
The parameter $\beta$   in ${\eqref{eq:waveform}}_2$ will be expressed in the 
sequel in terms of $\alpha$.  In the limit $\alpha\rightarrow \infty$ we expect to recover \eqref{eq:dispersive} if $\beta=\beta(\alpha)$ vanishes. Naturally, the characteristic speed associated with the wave operator ${\eqref{eq:waveform}}_2$ given by $v_\beta := \sqrt{|\gamma|/\beta}$ goes to $\infty$ as $\beta \to 0$, in line with the instantaneous signal speed of the elliptic operator in \eqref{eq:scrn_poiss}.\medskip \\
Now, in order to reduce the coupled mixed-order system \eqref{eq:waveform} to an equivalent purely first-order form, let us consider a change of variables following \cite{dhaouadi2019extended,dhaouadi2025first}, i.e.,
\begin{equation}
		w\rel 
        = c\rel_t, \, \quad
		p\rel =   c\rel_x, \quad 
		\psi\rel = \alpha \prn{u\rel - c\rel}.  
	\label{eq:w_p_psi}
\end{equation}
We then substitute these quantities  into \eqref{eq:waveform} and supply the expressions for the time-derivatives of $p\rel$ and $\psi\rel$  as closure equations to obtain a system of first-order
balance laws of the form
\begin{equation}
\begin{aligned}
	u\rel_t + \prn{f(u\rel) + \sgn(\gamma) \psi\rel}_x &=\phantom{-}0, \\
	\psi\rel_t+\alpha \prn{ f(u\rel)+\sgn(\gamma) \psi\rel}_x &= -\alpha\, w\rel,\\
		w\rel_{t}-v_\beta^2\, p\rel_{x} &= \phantom{-}\beta^{-1}\,  \psi\rel, \\
	p\rel_t - \phantom{v_\beta^2}w\rel_x&=\phantom{-}0
\end{aligned}  \qquad \mbox{ in } I_T.
	\label{hyperbolic_system}
\end{equation}
Note that if $w{\rel}\rightarrow u_t$, $\psi{\rel}\rightarrow -|\gamma| u_{xx}$ and $p{\rel}\rightarrow u_x$ as $\alpha\rightarrow \infty$, we recover the original equation \eqref{eq:dispersive}. Thus, the system \eqref{hyperbolic_system} appears to be a consistent approximation for \eqref{eq:dispersive}. 
\medskip \\
To maintain the consistency with the initial data from \eqref{eq:wave_equation_ini}, the  prepared  initial conditions 
\begin{equation}
u\rel(\cdot,0)= u_0,   \,  \psi\rel(\cdot,0)= -|\gamma| u_{0,xx},\,
w\rel(\cdot,0) =  - {f(u_0)}_x - |\gamma| u_{0,xxx}, \, 
p\rel(\cdot,0) =   u_{0,x} \mbox{ in }I
    \label{IC_BC_Hyp}
\end{equation}
are supposed to hold.\\
In Theorem \ref{relative_entropy_theorem_1} we prove that solutions of the 
periodic initial value problem \eqref{hyperbolic_system}, \eqref{IC_BC_Hyp}
converge for $\alpha \to \infty$  to solutions of \eqref{eq:dispersive}, \eqref{eq:dispersive_ini} if $\beta = \alpha^{-1}$ is chosen.
\subsection{Qualitative properties of the first-order approximation}
\subsubsection{Hyperbolicity, symmetrizability and local well-posedness}\label{sec: hyperbolicity}
In this section we omit the superscript $(.)\rel$ on the approximation unknowns  and simply write ${\mathbf U} = (u, \psi, w, p)^T$ for  $ {\mathbf U}\rel = (u\rel, \psi\rel, w\rel, p\rel)^T\in  \setR^4$, if no confusion is caused.\\ 
In order to verify the hyperbolicity of the system \eqref{hyperbolic_system}, we  consider a smooth solution $\mathbf U$ and  cast \eqref{hyperbolic_system} into the balance law  form
\begin{equation}
\mathbf{U}_t + {{\mathbf f}(\mathbf{U})}_x = \mathbf{b}(\mathbf{U}) \Longleftrightarrow
	\mathbf{U}_t + \mathbf{Df}(\mathbf{U})\mathbf{U}_x = \mathbf{b}(\mathbf{U}),
    \label{eq:homogeneous_system}
\end{equation} 
where $\mathbf{Df}(\mathbf{U})$ is the Jacobian of the flux ${\mathbf f} = {\mathbf f}({\mathbf U})$. The flux   and the linear source ${\mathbf b} = {\mathbf b}( {\mathbf U})$ are given by 
\begin{equation*}
\mathbf{f}(\mathbf{U}) = 
	\left(
	\begin{array}{c}
		f(u)+ \sgn(\gamma)\psi \\
		\alpha (f(u)+\sgn(\gamma)\psi) \\
	-	v^2_\beta p\\
	 -w 
	\end{array}
	\right), \quad 
\ \mathbf{b}(\mathbf{U}) = \mathbf{S} \mathbf{U},
\, \,\quad
\mathbf{S} = \begin{pmatrix}
0 &0 &0 & 0 \\
0 &0 &-\alpha & 0 \\
0 &\beta^{-1} &0 & 0 \\
0 &0 &0 & 0 
\end{pmatrix}, 
%
%
\end{equation*}
such that  the Jacobian of the flux is explicitly given by
\begin{equation*}
 \mathbf{Df}(\mathbf{U}) = 
\left(
\begin{array}{cccc}
	f'(u) & \sgn(\gamma)   & 0 & 0 \\
	\alpha f'(u) & \alpha \sgn(\gamma)  & 0 & 0 \\
	0 & 0 & 0 & -v_\beta^2 \\
	0 & 0 & -1 & 0 \\
\end{array}
\right).
\end{equation*}
The eigenvalues of $\mathbf{Df}(\mathbf{U})$  compute as the    real numbers $\lambda_1, \ldots, \lambda_4$ given by 
\begin{equation}
	\lambda_1=- v_\beta,\quad \lambda_2=0, \quad \lambda_{3} = v_\beta, \quad \lambda_4 = f'(u)+\alpha \sgn(\gamma).
\end{equation}
While the ordering $\lambda_1 < \lambda_2 < \lambda_3$ is direct, the placement of $\lambda_4$ within this ranking depends on the scaling of the free parameters $\alpha$ and $\beta$. In all what follows, we choose $\alpha$ such that $\abs{\lambda_4} > \lambda_3$ and ensure $\lambda_4$ does not change sign under the sufficient condition  
\begin{gather}
	\alpha > \max \abs{f'(u)}.
\end{gather}
Under these conditions, depending on the dispersion sign one obtains 
\begin{align*}
	\lambda_1 < \lambda_2 < \lambda_3 < \lambda_4 \quad \text{for} \ \sgn(\gamma) = 1, \qquad  \lambda_4<\lambda_1 < \lambda_2 < \lambda_3 \quad \text{for} \  \sgn(\gamma) = -1.
\end{align*}
In both cases the approximate system \eqref{hyperbolic_system} is  strictly hyperbolic. The corresponding right eigenvectors  ${{\bf r}_1,\ldots,{\bf r}_4} \in \setR^4$ can be explicitly computed and are given by 
\begin{equation}\label{eigen-vectors}
	{\mathbf{R}} :=  \big({\bf r}_1|\cdots|{\bf r}_4\big) =  \left(
	\begin{array}{cccc}
		0 & - \sgn(\gamma) & 0 & 1 \\
		0 & f'(u) & 0 & \alpha \\
		v_{\beta} & 0 & -v_{\beta} & 0 \\
		1 & 0 & 1 & 0
	\end{array}
	\right).
\end{equation} 
We proceed to prove that the hyperbolic system \eqref{hyperbolic_system} can be symmetrized using a unique symmetrizer. To be precise, we prove the following.
\begin{Lemma}[Symmetrizability and local well-posedness of the initial value problem for \eqref{hyperbolic_system}]\label{lem:wp}
For $\alpha > \max \abs{f'(u)}$, the hyperbolic system \eqref{hyperbolic_system} can be converted into an equivalent symmetric hyperbolic form. In particular, the hyperbolic system \eqref{hyperbolic_system} is a Friedrichs-symmetrizable system.\\
For
\[
\mathbf{U}_0 := (u_0, \psi_0, w_0, p_0)^{\top} \in ({H}^{s}(I))^4,~s > 3/2,
\]
there exists a time $T^*\in (0, \infty)$ such that the initial value problem  for \eqref{hyperbolic_system}  and     $\mathbf{U}(\cdot,0)= \mathbf{U}_0 $ in $I$
is solved in $I\times [0, T^*)$ by a unique function  $ \mathbf{U}$
such that
\[
\mathbf{U}(\cdot, t) \in C\!\left([0, T^*); ({H}^{s}(I))^4\right) 
\cap C^1\!\left([0, T^*); (H^{s-1}(I))^4\right)
\]
holds for $ s>3/2$. 
\end{Lemma}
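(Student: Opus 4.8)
The plan is to split the proof into two essentially independent tasks: (i) exhibiting a Friedrichs symmetrizer for \eqref{hyperbolic_system}, and (ii) deducing local well-posedness from the standard theory of quasilinear symmetric hyperbolic systems. The structural observation that drives the first task is that, in the ordering $\mathbf{U}=(u,\psi,w,p)^{\top}$, the flux Jacobian $\mathbf{Df}(\mathbf{U})$ is block-diagonal, decoupling into a $(u,\psi)$-block $M(u)=\left(\begin{smallmatrix} f'(u) & \sgn(\gamma) \\ \alpha f'(u) & \alpha\,\sgn(\gamma)\end{smallmatrix}\right)$ and a constant $(w,p)$-block $\left(\begin{smallmatrix} 0 & -v_\beta^{2} \\ -1 & 0\end{smallmatrix}\right)$. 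I would therefore look for a block-diagonal symmetrizer $\mathbf{A}_0(\mathbf{U})=\mathrm{diag}(A_1(u),A_2)$ and treat each block in turn.

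The $(w,p)$-block is immediate: the constant matrix $A_2=\mathrm{diag}(1,v_\beta^{2})$ is symmetric positive definite and renders $A_2\left(\begin{smallmatrix} 0 & -v_\beta^2 \\ -1 & 0\end{smallmatrix}\right)=\left(\begin{smallmatrix} 0 & -v_\beta^2 \\ -v_\beta^2 & 0\end{smallmatrix}\right)$ symmetric. The $(u,\psi)$-block is the crux: since $\det M(u)=0$, it is singular with eigenvalues $0$ and $\lambda_4=f'(u)+\alpha\,\sgn(\gamma)$, and a short computation shows that the symmetry constraint $A_1 M$ symmetric forces any constant candidate to be rank one, hence merely positive semidefinite. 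I would therefore use a genuinely $u$-dependent symmetrizer built from the eigenvectors in \eqref{eigen-vectors}: restricting $\mathbf{R}$ to the columns $\mathbf{r}_2,\mathbf{r}_4$ that span the block gives $R_1(u)=\left(\begin{smallmatrix} -\sgn(\gamma) & 1 \\ f'(u) & \alpha\end{smallmatrix}\right)$, and I would set $A_1(u)=(R_1(u)R_1(u)^{\top})^{-1}$. By construction $A_1$ is symmetric positive definite and $A_1 M$ is symmetric; since $\det\!\big(R_1R_1^{\top}\big)=(\det R_1)^2=\big(f'(u)+\alpha\,\sgn(\gamma)\big)^{2}$, the inverse is well-defined and, crucially, uniformly positive definite precisely because the hypothesis $\alpha>\max|f'(u)|$ forces $|f'(u)+\alpha\,\sgn(\gamma)|\ge \alpha-\max|f'(u)|>0$. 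Assembling $\mathbf{A}_0=(\mathbf{R}\mathbf{R}^{\top})^{-1}$ (equivalently the block matrix above), one obtains a symmetric positive definite $\mathbf{A}_0(\mathbf{U})$, depending smoothly on $\mathbf{U}$, with $\mathbf{A}_0\,\mathbf{Df}=(\mathbf{R}^{-1})^{\top}\Lambda\,\mathbf{R}^{-1}$ symmetric because $\Lambda$ is diagonal; this is the asserted symmetrizer and establishes the Friedrichs-symmetrizable structure.

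For local well-posedness I would left-multiply \eqref{hyperbolic_system} by $\mathbf{A}_0(\mathbf{U})$ to place it in the quasilinear symmetric hyperbolic form $\mathbf{A}_0(\mathbf{U})\mathbf{U}_t+\big(\mathbf{A}_0\mathbf{Df}\big)(\mathbf{U})\mathbf{U}_x=\mathbf{A}_0(\mathbf{U})\mathbf{S}\mathbf{U}$, in which $\mathbf{A}_0$ and $\mathbf{A}_0\mathbf{Df}$ are symmetric, $\mathbf{A}_0$ is uniformly positive definite on the range of $\mathbf{U}$, and the right-hand side is a smooth (indeed linear, hence globally Lipschitz) zeroth-order term that does not affect the local theory. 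Invoking the standard existence and uniqueness theorem for such systems on the periodic domain (see, e.g., \cite{dafermos2005hyperbolic}), with the one-dimensional regularity threshold $s>d/2+1=3/2$, then yields a unique maximal solution $\mathbf{U}\in C([0,T^{*});(H^s(I))^4)\cap C^1([0,T^{*});(H^{s-1}(I))^4)$.

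The main obstacle is the singular $(u,\psi)$-block: because $\det M\equiv 0$, symmetrizability cannot come from a fixed matrix, so one is forced into a state-dependent symmetrizer, after which the real work is to certify that its smallest eigenvalue stays bounded away from zero. This is exactly where $\alpha>\max|f'(u)|$ enters, through $\det(R_1R_1^{\top})=(f'(u)+\alpha\,\sgn(\gamma))^2$. A secondary point to handle carefully is that this lower bound must persist along the flow: since $H^s(I)\hookrightarrow L^\infty(I)$ for $s>1/2$, the initial range of $u$ is bounded, and a short-time continuity argument keeps $u$ inside a slightly enlarged compact set, so that the hypothesis—and hence the uniform positive definiteness of $\mathbf{A}_0$—remains valid up to $T^{*}$.
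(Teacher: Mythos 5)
Your proof is correct, but it follows a genuinely different route from the paper's. The paper never constructs a symmetrizer matrix in the original unknowns: it instead performs a nonlinear change of dependent variables $\boldsymbol{\Phi}=\mathcal{T}(\mathbf{U})$ with $\phi_1=\sgn(\gamma)f(u)+\psi$, $\phi_2=\alpha u-\psi$, $\phi_3=v_\beta p$, $\phi_4=w$ (essentially the Riemann-invariant coordinates), checks that $\det D\mathcal{T}(\mathbf{U})=v_\beta\lambda_4\neq 0$ under $\alpha>\max|f'(u)|$ so that $\mathcal{T}$ is bijective, and observes that in the new unknowns the convective matrix $\tilde{\mathbf{A}}(\boldsymbol{\Phi})$ is already symmetric (diagonal apart from the $w$--$p$ wave block); Friedrichs symmetrizability and the classical local existence theory are then invoked in those variables. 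You stay in the original variables and exploit the block-diagonal structure of $\mathbf{Df}(\mathbf{U})$: a constant positive diagonal matrix symmetrizes the wave block, while the eigenvector-based matrix $A_1(u)=\big(R_1(u)R_1(u)^{\top}\big)^{-1}$ handles the singular $(u,\psi)$-block, the assembled symmetrizer being the classical $(\mathbf{R}\mathbf{R}^{\top})^{-1}$ that exists for any strictly hyperbolic system with a smooth eigenbasis. Both arguments hinge on exactly the same nondegeneracy, $|\lambda_4|=|f'(u)+\alpha\sgn(\gamma)|\ge \alpha-\max|f'(u)|>0$. What your route buys: it is the generic, pointwise construction (no nonlinear change of unknowns to invert, no need to transport the $H^s$ regularity of the data through $\mathcal{T}$ and back), and your remark that no \emph{constant} matrix can symmetrize the $(u,\psi)$-block cleanly explains why a state-dependent symmetrizer is unavoidable. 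What the paper's route buys: the transformed system is literally symmetric (indeed diagonal in its genuinely nonlinear part), and the change of variables coincides with the Riemann invariants exhibited elsewhere in the paper, so the two structural results reinforce each other. One cosmetic slip on your side: $(\mathbf{R}\mathbf{R}^{\top})^{-1}$ is not exactly equal to your block matrix $\mathrm{diag}\big(A_1,A_2\big)$ with $A_2=\mathrm{diag}\big(1,v_\beta^{2}\big)$ --- its $(w,p)$-block is $\mathrm{diag}\big(1/(2v_\beta^{2}),1/2\big)$ --- but both choices symmetrize the constant wave block, so nothing in the argument breaks.
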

\begin{proof}
In order to prove this lemma, first consider the  map
\begin{equation*}
	\mathcal{T}: \mathbb{R}^4 \mapsto \mathbb{R}^4, \qquad 
	\mathbf{U} = \left(
	\begin{array}{c}
		u \\
		\psi \\
		w \\
		p 
	\end{array}
	\right) \longmapsto 
	\boldsymbol{\Phi} = 	\left(
	\begin{array}{c}
		\phi_1 \\
		\phi_2 \\
		\phi_3 \\
		\phi_4 
	\end{array}
	\right), \quad \text{with}~ 	\begin{cases}
		\phi_1 =   \sgn(\gamma) f(u) + \psi, \\
		\phi_2 = \alpha \,u - \psi, \\
		\phi_3 = v_\beta\,p, \\
		\phi_4 = w.
	\end{cases} 
\end{equation*}
The Jacobian of this transformation is given by
\begin{equation*}
	D\mathcal T(\mathbf{U}) = \left(
	\begin{array}{cccc}
		 \sgn(\gamma) f'(u) & 1  & 0 & 0 \\
		\alpha   & \!\!\!\!-1  & 0 & 0 \\
		0 & 0 & 0 & v_\beta \\
		0 & 0 & 1 & 0 \\
	\end{array}
	\right), 
\end{equation*}
{such that}  $\det\prn{D\mathcal T(\mathbf{U})} = v_\beta\lambda_4\ne0$.
This shows that the mapping $\mathcal{T}$ is bijective, and thus the system \eqref{eq:homogeneous_system} can be converted into the equivalent form
\begin{equation*}
		\boldsymbol{\Phi}_t + \mathbf{\tilde A}(\boldsymbol{\Phi})\boldsymbol{\Phi}_x = \mathbf{\tilde b}(\boldsymbol{\Phi}),
\end{equation*} 
with the symmetric matrix $\mathbf{\tilde A}= \mathbf{\tilde A}(\boldsymbol{\Phi})$  given by
\begin{equation*}
\mathbf{\tilde A}(\boldsymbol{\Phi}) = \left(
	\begin{array}{cccc}
		  \sgn(\gamma)\alpha + f'(u(\boldsymbol{\Phi})) & 0  & 0 & 0 \\
		0   & 0  & 0 & 0 \\
		0 & 0 & 0 & -v_\beta \\
		0 & 0 & -v_\beta &  0 \\
	\end{array}
	\right)
\end{equation*}
and $\mathbf{\tilde b}(\boldsymbol{\Phi})=(0, -\alpha \phi_4, \psi(\phi_1, \phi_2), 0)^\top$ such that $\psi(\boldsymbol{\Phi})$ satisfies $\sgn(\gamma) f(u(\boldsymbol{\Phi}))+\psi=\phi_1$ and $\alpha u(\boldsymbol{\Phi})-\psi=\phi_2$. Here, the quantity $u(\boldsymbol{\Phi})$ is simply the value of $u$, computed from $\boldsymbol{\Phi}$ through the inverse transform $\mathcal{T}^{-1}$.\\
Clearly, under this change of variables, the system \eqref{hyperbolic_system} is a Friedrichs-symmetrizable system, and thus the local well-posedness of the Cauchy problem for classical solutions is a direct consequence of classical results on the theory of hyperbolic balance laws, see, e.g.,~\cite{dafermos2005hyperbolic}.
\end{proof}
\begin{remark} In Section \ref{sec:entropy}
below we show that \eqref{hyperbolic_system} is equipped with an 
entropy/entropy-flux pair with a convex entropy provided that  $\sgn(\gamma)f'(u)\geq \delta>0$ holds. This observation
also leads to a local well-posedness result like that in Lemma \ref{lem:wp}.
But note that  Lemma \ref{lem:wp} requires $f$ to be smooth only, without any  assumption on the sign of $f'$.  
\end{remark}
\subsubsection{Riemann invariants}
We explore the mathematical structure of the system \eqref{hyperbolic_system} and prove in this section that the system \eqref{hyperbolic_system} possesses an entire set of linearly independent Riemann invariants, which allows us to convert the system \eqref{hyperbolic_system} into an equivalent diagonal form as well. 
\begin{Lemma}[Riemann invariants for the system \eqref{hyperbolic_system}]
The	system \eqref{hyperbolic_system} possesses a full set of Riemann invariants, which form a coordinate system. 
The Riemann invariants $R_1=R_1({\bf{U}}), \ldots, R_4 = R_4(\bf{U})$ are given by  
	\begin{equation*}
		R_1 ({\bf{U}})= w +v_\beta\, p , \quad R_2({\bf{U}}) = \alpha\, u - \psi, \quad R_3 ({\bf{U}})= w - v_\beta \, p, \quad R_4({\bf{U}}) =  \sgn(\gamma) f(u) +  \psi. 
	\end{equation*}
\end{Lemma}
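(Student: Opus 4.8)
The plan is to avoid recomputing left eigenvectors and instead exploit the explicit right eigenvectors $\mathbf r_1,\dots,\mathbf r_4$ already recorded in the columns of $\mathbf R$ in \eqref{eigen-vectors}. Recall that the functions $R_1,\dots,R_4$ diagonalize \eqref{hyperbolic_system} and form a coordinate system precisely when each gradient $\nabla R_i$ is a left eigenvector of $\mathbf{Df}(\mathbf U)$ for the eigenvalue $\lambda_i$; by biorthogonality of the eigenbases for the distinct eigenvalues (strict hyperbolicity), this is equivalent to the relations $\nabla R_i\cdot\mathbf r_j=0$ for all $j\neq i$, together with $\nabla R_i\cdot\mathbf r_i\neq0$. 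Establishing these sixteen inner-product identities is the core of the proof, so first I would record the (constant or near-constant) gradients with respect to $\mathbf U=(u,\psi,w,p)^{\top}$,
\[
\nabla R_1=(0,0,1,v_\beta),\quad \nabla R_2=(\alpha,-1,0,0),\quad \nabla R_3=(0,0,1,-v_\beta),\quad \nabla R_4=(\sgn(\gamma)f'(u),1,0,0).
\]

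Then I would pair these against the columns of $\mathbf R$. The off-diagonal products all vanish: $\nabla R_1$ and $\nabla R_3$ annihilate $\mathbf r_2,\mathbf r_4$ trivially (disjoint blocks) and annihilate each other's eigenvectors because the $v_\beta^2$ contributions cancel, while $\nabla R_2,\nabla R_4$ annihilate $\mathbf r_1,\mathbf r_3$ trivially; the only identity that is not immediate is $\nabla R_4\cdot\mathbf r_2=-f'(u)+f'(u)=0$, which relies on $\sgn(\gamma)^2=1$. The diagonal products are nonzero, namely $\nabla R_1\cdot\mathbf r_1=2v_\beta$, $\nabla R_3\cdot\mathbf r_3=-2v_\beta$, $\nabla R_2\cdot\mathbf r_2=-\lambda_4$, and $\nabla R_4\cdot\mathbf r_4=\sgn(\gamma)\lambda_4$, which confirms that each $\nabla R_i$ is a genuine (nonvanishing) left-eigenvector direction for $\lambda_i$.

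It then remains to verify that the $R_i$ constitute a coordinate system. For this I would evaluate the Jacobian of $\mathbf U\mapsto(R_1,R_2,R_3,R_4)^{\top}$, whose rows are the gradients above. Its block structure (rows $R_1,R_3$ supported on $(w,p)$, rows $R_2,R_4$ on $(u,\psi)$) factorizes the determinant, up to a permutation sign, as the product of the $(w,p)$-minor $-2v_\beta$ and the $(u,\psi)$-minor $\alpha+\sgn(\gamma)f'(u)=\sgn(\gamma)\lambda_4$; hence the Jacobian determinant equals $\pm2\,v_\beta\,\sgn(\gamma)\,\lambda_4$. This is nonzero since $v_\beta>0$ and, under the standing assumption $\alpha>\max\abs{f'(u)}$ of Lemma \ref{lem:wp}, one has $\abs{\lambda_4}\geq\alpha-\abs{f'(u)}>0$, so the transformation is everywhere a local diffeomorphism. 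Finally, to exhibit the diagonal form I would multiply \eqref{eq:homogeneous_system} on the left by each $\nabla R_i$; since $\nabla R_i\,\mathbf{Df}(\mathbf U)=\lambda_i\,\nabla R_i$, the flux term collapses and yields the decoupled scalar balance laws $\partial_t R_i+\lambda_i\,\partial_x R_i=\nabla R_i\cdot\mathbf b(\mathbf U)$.

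I do not expect a genuine conceptual obstacle: once the right eigenvectors from \eqref{eigen-vectors} are in hand, the argument is a finite linear-algebra verification. The two points that require care are the sign identity $\sgn(\gamma)^2=1$, which is what makes the $R_4$-relations close, and the invocation of $\alpha>\max\abs{f'(u)}$ to guarantee $\lambda_4\neq0$, ensuring that the change of variables is nondegenerate on the whole state space rather than only locally.
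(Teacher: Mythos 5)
Your proposal is correct, and all sixteen inner products, the diagonal values ($2v_\beta$, $-\lambda_4$, $-2v_\beta$, $\sgn(\gamma)f'(u)+\alpha=\sgn(\gamma)\lambda_4$), and the Jacobian factorization check out against the data in \eqref{eigen-vectors}. The route, however, differs from the paper's. The paper simply computes the gradients $\boldsymbol{\ell}_i=\nabla_{\mathbf U}R_i$ (the same four row vectors you wrote down) and verifies by direct multiplication that $\boldsymbol{\ell}_i\,\mathbf{Df}(\mathbf U)=\lambda_i\,\boldsymbol{\ell}_i$; this is self-contained, needs neither the right eigenvectors nor any spectral-gap information, and is valid for every choice of the parameters. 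You instead check the biorthogonality relations $\nabla R_i\cdot\mathbf r_j=0$, $j\neq i$, against the columns of $\mathbf R$ and then invoke the fact that, when the eigenvalues are simple, a nonzero row vector annihilating $\mathbf r_j$ for all $j\neq i$ must be proportional to the left eigenvector of $\lambda_i$. That argument is sound, but note it leans on full strict hyperbolicity — not merely $\lambda_4\neq 0$, but also $\lambda_4\neq\pm v_\beta$, i.e.\ the standing ordering condition $\abs{\lambda_4}>\lambda_3$ imposed in Section \ref{sec: hyperbolicity} — so your proof is conditional on those parameter restrictions while the paper's direct verification is not. On the other hand, your proof is more complete on two points the paper treats implicitly: the explicit nonvanishing Jacobian determinant $\pm 2v_\beta(\alpha+\sgn(\gamma)f'(u))$, which is what literally justifies the phrase ``form a coordinate system,'' and the derivation of the decoupled equations $\partial_t R_i+\lambda_i\,\partial_x R_i=\nabla R_i\cdot\mathbf b(\mathbf U)$, which the paper only mentions in the remark following the lemma.
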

\begin{proof}
It suffices to show that the row vectors $\boldsymbol{\ell}_i$ defined as
\begin{equation*}
\boldsymbol{\ell}_i=	\boldsymbol{\ell}_i(\mathbf{U})=  \big({R_{i,u}}(\mathbf{U}),  {R_{i,\psi}}(\mathbf{U}), {R_{i,w}}(\mathbf{U}), {R_{i,p}}(\mathbf{U})  \big) \quad    (i=  1,\ldots, 4, \, \mathbf{U} \in \setR^4),
\end{equation*}
are each a left eigenvector associated with $\lambda_i$. A direct computation gives
\begin{equation*}
	\boldsymbol{\ell}_1 = \prn{0,0, 1, v_\beta},\quad  \boldsymbol{\ell}_2 = \prn{\alpha, -1, 0, 0}, \quad \boldsymbol{\ell}_3 = \prn{0,0,1,-v_\beta}, \quad \boldsymbol{\ell}_4 = \prn{ \sgn(\gamma) f'(u),1,0,0},
\end{equation*}     
and straightforward algebra shows that 
\begin{equation*}
	\boldsymbol{\ell}_i\,\mathbf{Df}(\mathbf{U})~=~\lambda_i\, \boldsymbol{\ell}_i\quad (i=  1,\ldots, 4),
\end{equation*} 
which establishes the result.
\end{proof}
Note that the full set of explicit Riemann invariants allows us to make the system completely diagonal along the characteristic directions. This allows, e.g., to find exact solutions of the  Riemann problem  for the  hyperbolic system \eqref{hyperbolic_system} or to devise proper 
boundary conditions  \eqref{hyperbolic_system} beyond our periodic choice.
\subsubsection{Entropy/entropy-flux pair and weak entropy solutions of \eqref{hyperbolic_system}}\label{sec:entropy}
In Section \ref{sec: hyperbolicity}, we have already proven that the system \eqref{hyperbolic_system} is a Friedrichs-symmetrizable system and that the associated initial value problem is locally well-posed for classical solutions.
Since \eqref{hyperbolic_system} is a system of nonlinear hyperbolic balance laws, we also require a notion of weak solvability. In fact,  we show that \eqref{hyperbolic_system}  is equipped with an entropy/entropy-flux pair if the flux $f$ is monotone.  Then we can 
naturally define weak entropy solutions, see, e.g.,~\cite{dafermos2005hyperbolic}.\\
Let us first collect some assumptions on the behavior of $f$ and some growth conditions needed later on in Section \ref{sec: convergence}. 
\begin{assumption}\namedlabel{ass_fmonotone}{\ensuremath{{\mathcal{A}}}}
Let $L>0$ be a constant such that the smooth function  $f$ from \eqref{main}
satisfies 
\[
|f'(u)|,~| f''(u)|~ \le L \quad (u\in \setR).
\]
Further,  there is a constant $\delta >0$ such that the lower bound $\sgn(\gamma)f'(u)\geq \delta >0$ holds for all $u\in \setR$.  
\end{assumption}%
Note that the latter condition ensures that the function
\[ \sgn(\gamma)F(u) = \sgn(\gamma)\int_0^u
 f(v)\, dv
 \] is uniformly convex in $\setR$. It is satisfied for the choice  $|f(u)|=au^{2n-1}+bu, ~(n\in \mathbb{N})$ with $a,b >0$ which has been used in, e.g., the seminal works \cite{bona2022numerical,benjamin1972model}.
 
Under the Assumption \ref{ass_fmonotone}, we can provide an entropy/entropy-flux pair for the system \eqref{hyperbolic_system}. By an entropy/entropy-flux pair for the $\alpha$-parameterized system \eqref{hyperbolic_system}, we mean a pair 
$(E^{\alpha}, Q^{\alpha}): \mathbb{R}^4\mapsto \mathbb{R}^2$ such that $E^{\alpha}$ is convex  with positive definite  Hessian $\nabla^2_{{\mathbf U}} E^{\alpha}$ in $\setR^4$ and such that the pair  $(E^{\alpha}, Q^{\alpha})$ satisfies 
\begin{equation}\label{entropy}
\nabla_{\mathbf{U}} E^{\alpha}(\mathbf{U})^\top\mathbf{Df(U)}=\nabla_{\mathbf{U}}Q^{\alpha}(\mathbf{U})^\top
 \text{ for all $\mathbf{U}\in \mathbb{R}^4$}.
\end{equation}
A specific entropy/entropy-flux pair for the system \eqref{hyperbolic_system} is given in the following lemma.
\begin{Lemma}[Entropy/entropy-flux pair for \eqref{hyperbolic_system}] \label{lemma_entropy}
The pair $(E^{\alpha}, Q^{\alpha})$ with 
\begin{equation}\label{convex-entropy}
    E^{\alpha}({\mathbf U}) = \sgn(\gamma) f(u) + \frac{|\gamma|}{2} {p}^2 + \frac{1}{2\alpha} \psi^2+ \frac{\beta }{2}{w}^2, 
\end{equation}
and 
\begin{align}\label{entropy-flux}
    Q^{\alpha}({\mathbf U})= \dfrac{1}{2}f(u)^2+f(u) \psi+\sgn(\gamma)\dfrac{\psi^2}{2}- |\gamma| pw 
\end{align}    
is an entropy/entropy-flux pair for \eqref{hyperbolic_system}. 
\end{Lemma}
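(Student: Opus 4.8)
The plan is to verify directly the two defining requirements of an entropy/entropy--flux pair: strict convexity of $E^\alpha$ and the compatibility relation \eqref{entropy}. Both reduce to short computations once one exploits the block structure of the flux Jacobian: $\mathbf{Df}(\mathbf U)$ decouples into a $(u,\psi)$ block and a $(w,p)$ block, and the candidate $E^\alpha$, $Q^\alpha$ split accordingly as a sum of a function of $(u,\psi)$ and a function of $(w,p)$.

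First I would establish convexity by computing the Hessian. Reading the scalar part of $E^\alpha$ as the energy primitive $\sgn(\gamma)F(u)$ with $F'=f$ (so that it matches the energy $E[u]$ in \eqref{energy} and its $u$--second derivative is $\sgn(\gamma)f'(u)$), the matrix $\nabla^2_{\mathbf U}E^\alpha$ is diagonal with entries $\sgn(\gamma)f'(u)$, $\tfrac1\alpha$, $\beta$, $|\gamma|$ in the coordinates $(u,\psi,w,p)$. Under Assumption~\ref{ass_fmonotone} one has $\sgn(\gamma)f'(u)\ge\delta>0$, while $\alpha,\beta,|\gamma|>0$ by construction; hence every diagonal entry is bounded below by the positive constant $\min\{\delta,\tfrac1\alpha,\beta,|\gamma|\}$, and $E^\alpha$ is uniformly, in particular strictly, convex on $\mathbb{R}^4$. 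This step is essentially forced: the stated coefficient $\tfrac{|\gamma|}{2}$, $\tfrac1{2\alpha}$, $\tfrac{\beta}{2}$ on $p^2,\psi^2,w^2$ are exactly those needed below, and the scalar term has to be the primitive of $f$, since a bare $\sgn(\gamma)f(u)$ would produce a Hessian entry $\sgn(\gamma)f''(u)$ that Assumption~\ref{ass_fmonotone} does not sign.

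Next I would verify \eqref{entropy}, i.e.\ $\nabla_{\mathbf U}E^\alpha{}^\top\mathbf{Df}(\mathbf U)=\nabla_{\mathbf U}Q^\alpha{}^\top$, by splitting it into two independent $2\times2$ checks. For the $(w,p)$ block one has $\nabla_{(w,p)}E^\alpha=(\beta w,|\gamma|p)$; multiplying on the right by $\left(\begin{smallmatrix}0&-v_\beta^2\\-1&0\end{smallmatrix}\right)$ and using the defining relation $v_\beta^2=|\gamma|/\beta$ collapses the coefficients to $(-|\gamma|p,-|\gamma|w)$, which is exactly $\nabla_{(w,p)}Q^\alpha$. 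For the $(u,\psi)$ block one computes $\nabla_{(u,\psi)}E^\alpha=(\sgn(\gamma)f(u),\tfrac{\psi}{\alpha})$ and multiplies by $\left(\begin{smallmatrix}f'(u)&\sgn(\gamma)\\\alpha f'(u)&\alpha\sgn(\gamma)\end{smallmatrix}\right)$; here the factor $\tfrac1\alpha$ in the $\psi$--entry is precisely what cancels the $\alpha$ in the second row of the block, leaving two scalar identities to be matched against $\partial_u Q^\alpha$ and $\partial_\psi Q^\alpha$.

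The computation carries no deep obstacle; the only delicate point is the sign bookkeeping in the $(u,\psi)$ block, where the sign of the dispersion $\sgn(\gamma)$, the primitive of $f$, and the quadratic coefficient $\tfrac1{2\alpha}$ of $\psi^2$ must conspire so that both scalar identities hold simultaneously. This is also what pins down the admissible pair intrinsically: requiring $\nabla^2_{\mathbf U}E^\alpha\,\mathbf{Df}(\mathbf U)$ to be symmetric, which is the structural criterion guaranteeing the existence of an entropy flux, forces the $u$--second derivative of $E^\alpha$ to equal $\sgn(\gamma)f'(u)$ and ties the quadratic coefficients to $\alpha,\beta,|\gamma|$ exactly as written; the flux $Q^\alpha$ is then recovered, up to an additive constant, by integrating $\nabla_{\mathbf U}Q^\alpha=\mathbf{Df}(\mathbf U)^\top\nabla_{\mathbf U}E^\alpha$ over the separable blocks.
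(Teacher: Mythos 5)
Your strategy is the same as the paper's (direct verification of convexity and of the compatibility relation \eqref{entropy}), just organized block-wise, and two of your three computations are correct and complete: the Hessian/convexity argument (including your correct reading of the term $\sgn(\gamma)f(u)$ in \eqref{convex-entropy} as the primitive $\sgn(\gamma)F(u)$ with $F'=f$) and the $(w,p)$ block. The genuine gap is exactly where you defer the work: in the $(u,\psi)$ block you stop at ``two scalar identities to be matched against $\partial_u Q^\alpha$ and $\partial_\psi Q^\alpha$'' without performing the match, and that is the only place where the claim can fail --- and it does fail for $\sgn(\gamma)=-1$. Using the (correct) block you wrote down,
\begin{equation*}
\bigl(\sgn(\gamma)f(u),\ \tfrac{\psi}{\alpha}\bigr)
\begin{pmatrix} f'(u) & \sgn(\gamma) \\ \alpha f'(u) & \alpha\sgn(\gamma) \end{pmatrix}
=\Bigl(f'(u)\bigl(\sgn(\gamma)f(u)+\psi\bigr),\ f(u)+\sgn(\gamma)\psi\Bigr),
\end{equation*}
whereas differentiating \eqref{entropy-flux} gives $\partial_u Q^\alpha=f'(u)\bigl(f(u)+\psi\bigr)$. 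The first components agree only when $\sgn(\gamma)=1$; for $\sgn(\gamma)=-1$ they differ by $2f(u)f'(u)$, which is not identically zero under Assumption $\mathcal{A}$. So with $Q^\alpha$ exactly as stated, \eqref{entropy} holds only for $\gamma>0$; the flux compatible with $E^\alpha$ for either sign is
\begin{equation*}
Q^{\alpha}(\mathbf U)=\sgn(\gamma)\tfrac{1}{2}f(u)^2+f(u)\psi+\sgn(\gamma)\tfrac{\psi^2}{2}-|\gamma|pw,
\end{equation*}
which is what your own integration recipe $\nabla_{\mathbf U}Q^\alpha=\mathbf{Df}(\mathbf U)^\top\nabla_{\mathbf U}E^\alpha$ actually produces. (Your structural remark is sound: $\nabla^2_{\mathbf U}E^\alpha\,\mathbf{Df}(\mathbf U)$ is symmetric for either sign of $\gamma$, so an entropy flux exists; it is just not the one written in \eqref{entropy-flux} when $\gamma<0$.)

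For comparison, the paper's proof does carry out the multiplication, but it does so with a matrix whose $(1,1)$ entry is $\sgn(\gamma)f'(u)$, which is inconsistent with the Jacobian of the flux of \eqref{hyperbolic_system} stated earlier in the paper (the $u$-component of the flux is $f(u)+\sgn(\gamma)\psi$, so that entry is $f'(u)$); this compensating slip is precisely what makes the product come out as $f'(u)(f(u)+\psi)$ and match \eqref{entropy-flux}. Since you use the correct Jacobian, you cannot appeal to that cancellation: to close your proof you must either restrict the verification to $\sgn(\gamma)=1$, or carry the dispersion sign onto the $\tfrac12 f(u)^2$ term of the flux as above, and in either case actually write out and confirm both scalar identities rather than asserting that the signs ``conspire.''
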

\begin{proof} 
In order to prove this lemma, we first compute the gradients of $E^{\alpha}$ and $Q^{\alpha}$ as
\begin{align*}
    \nabla_{\mathbf{U}} E^{\alpha}(\mathbf{U})&=(\sgn(\gamma) f(u),  \psi/\alpha, \beta w, |\gamma| p)^\top,\\
    \nabla_{\mathbf{U}} Q^{\alpha}(\mathbf{U})&=(f'(u)(f(u) +\psi), f(u)+\sgn(\gamma)\psi, -|\gamma| p, -|\gamma| w)^\top.
\end{align*}
Then, we  directly compute 
\begin{align*}
   \nabla_{\mathbf{U}} E^{\alpha}(\mathbf{U})^\top\mathbf{Df(U)}&=(\sgn(\gamma) f(u),  \psi/\alpha, \beta w, \gamma p) \left(
\begin{array}{cccc}
	\sgn(\gamma) f'(u) & \sgn(\gamma)   & 0 & 0 \\
	\alpha f'(u) & \alpha \sgn(\gamma)  & 0 & 0 \\
	0 & 0 & 0 & -v_\beta^2 \\
	0 & 0 & -1 & 0 \\
\end{array}
\right)\\
&=\big(f'(u)(f(u) +\psi), f(u)+\sgn(\gamma) \psi, -|\gamma| p, -|\gamma| w\big)\\
&= \nabla_{\mathbf{U}} Q^{\alpha}(\mathbf{U})^\top.
\end{align*}
Hence, the pair $(E^{\alpha}, Q^{\alpha})$ satisfies the condition \eqref{entropy}. Due to the Assumption \ref{ass_fmonotone}, the entropy $E^{\alpha}$ is strictly convex, and the lemma is proven.
\end{proof}
In passing, we note that the entropy/entropy pair $(E^{\alpha}, Q^{\alpha})$  is asymptotically consistent with the pair $(E, Q)$ for \eqref{eq:dispersive} from \eqref{gradientflow} in the sense that  $(E^{\alpha}, Q^{\alpha})$ collapses to $(E, Q)$  when we let $\alpha \to \infty$.\\ 
One can readily check that the balance term $\mathbf b$  in \eqref{eq:homogeneous_system}
satisfies 
\[
\nabla_{\mathbf{U}} E^{\alpha}(\mathbf{U})\cdot \mathbf{b(U)}= 0
\]
for all $\mathbf{U}\in \mathbb{R}^4$. This property and the  compatibility condition \eqref{entropy} imply that any smooth solution $\mathbf{U}$ of \eqref{hyperbolic_system} is entropy conservative, i.e., 
\[{E^{\alpha}(\mathbf{U})}_t+{Q^{\alpha}(\mathbf{U})}_x  =   \nabla_{\mathbf{U}} E^{\alpha}(\mathbf{U})\cdot \mathbf{b(U)} = 0 \mbox{ in $I_T$}.
\]
In other words, the balance term does not dissipate the energy $E^{\alpha}$. 
For this reason, we cannot directly apply any standard results like \cite{yong2004entropy} to deduce global well-posedness of classical solutions (for at least appropriate initial data).  Therefore, we have to account for discontinuous solutions and introduce a 
weak solution concept for the hyperbolic system \eqref{hyperbolic_system}.
\begin{definition}\label{weak_soln}
 We call a function $\mathbf{U} \in L^\infty_{\mathrm{loc}}\big(I_T;\,\mathbb{R}^4\big)$ a weak solution of the system \eqref{hyperbolic_system} with initial data 
$\mathbf{U}_0 \in L^\infty(I;\mathbb{R}^4)$, if we have for each vector-valued function $  {\bm \varphi} \in C_0^{\lip} \big( I_T;\,\mathbb{R}^4\big)$ the identity 
\begin{equation*}
    \int_{0}^{T}\int_{I} 
    \Big( \mathbf{U}\cdot {\bm \varphi}_t
          + \mathbf{f}(\mathbf{U})\cdot {\bm \varphi}_x
          + \mathbf{b}(\mathbf{U})\cdot   {\bm \varphi}\Big)\,dx\,dt
    + \int_{I} \mathbf{U}_0\cdot {\bm \varphi}(\cdot, 0)\,dx = 0.
\end{equation*}
\end{definition}
In view of explicit entropy/entropy-flux pairs of \eqref{hyperbolic_system}, we define weak entropy solutions of \eqref{hyperbolic_system} as follows.
\begin{definition}\label{entropy_soln}
We call a function $\mathbf{U}\in {L}_{\mathrm{loc}}^{\infty}(I_T; ~\mathbb{R}^4 )$ a weak  entropy solution of the system \eqref{hyperbolic_system} associated with the entropy \eqref{convex-entropy} if it is a weak solution and if 
\begin{align}\hspace*{-0.25cm}
    \int_{0}^{T}\int_{I} \Big(E^{\alpha}(\mathbf{U})\varphi_t+Q^{\alpha}(\mathbf{U})\varphi_x+\nabla_{\mathbf{U}}E^{\alpha}(\mathbf{U})\cdot \mathbf{b(U)}\varphi\Big) \, dx\, dt+\int_{I} E^{\alpha}(\mathbf{U}_0)\varphi(\cdot, 0)\, dx\geq 0. 
\end{align}
holds for all non-negative functions $\varphi \in C^{\lip}_0( I_T)$.
\end{definition}
\subsubsection{Least action principle and energy conservation: Hamiltonian structure of the system \eqref{hyperbolic_system}}
In this section, we use the principle of classical mechanics to show that the entropy of the hyperbolic system \eqref{hyperbolic_system} is a Hamiltonian with respect to an augmented Lagrangian. In particular, the entropy \eqref{convex-entropy} can then be interpreted as the energy of the approximate system \eqref{hyperbolic_system}.\\ 
First, we recall that the dispersive equation \eqref{eq:dispersive} is derivable from a least action principle (see \S 11.7 in \cite{whitham2011linear}), as it can be considered as a generalized KdV equation. Indeed we consider the scalar potential $\eta= \eta(x,t)$ such that ${u}(x,t) = \eta_x(x,t)$ and define the Lagrangian $\mathcal{L}$ as
\begin{equation}
	\mathcal{L}[\eta_t,\eta_x,\eta_{xx}] := \int_I \prn{\sgn(\gamma)\left(\frac{1}{2}\eta_{x}\eta_{t} + F(\eta_x)\right) + \frac{|\gamma|}{2} \prn{\eta_{xx}}^2} dx, 
    \label{eq:org_Lagrangian}
\end{equation}
and then apply Hamilton's principle for $t_1,t_2\in [0,T]$ to the action 
\begin{equation}
	\mathcal{S}[\eta_t,\eta_x,\eta_{xx}] = \int_{t_0}^{t_1} \mathcal{L}[\eta_t,\eta_x,\eta_{xx}] \, dt. 
\end{equation}
This yields the Euler-Lagrange equation
\begin{equation}
	\prn{\pd{\mathcal{L}}{\eta_t}}_t + \prn{\pd{\mathcal{L}}{\eta_x}}_x - \prn{\pd{\mathcal{L}}{\eta_{xx}}}_{xx}= \sgn(\gamma)\prn{\eta_{xt} + \prn{f(\eta_x)}_x} - |\gamma| \eta_{xxx} = 0,
\end{equation}
which corresponds to \eqref{eq:dispersive} after substituting ${u}=\eta_x$.\\
We point out that the analogous least action principle can be obtained for the hyperbolic approximation in its wave propagation form \eqref{eq:waveform}. We prove this in the following lemma.
\begin{proposition}\label{hamiltonian}
The hyperbolic system \eqref{hyperbolic_system} can be formulated as a Hamiltonian system with an augmented Lagrangian of the form
\begin{equation}
	\mathcal{L}_\alpha[\eta_t,\eta_x,c,c_t,c_{x}] := \int_I \prn{\sgn(\gamma)\left(\frac{1}{2}\eta_x\eta_t + F(\eta_x)\right)+ \frac{|\gamma|c_{x}^2}{2}  + \frac{\alpha}{2}\prn{\eta_x - c}^2  - \frac{\beta}{2}c_t^2} dx. 
    \label{eq:lagrangian_aug}
\end{equation}	
Moreover, the Hamiltonian density with respect to the augmented Lagrangian \eqref{eq:lagrangian_aug} satisfies 
\begin{align}
    \mathcal{H}_{\alpha}[\eta_t,\eta_x,c,c_t,c_{x}]=-\int_{I}E^{\alpha}(\mathbf{U})\, dx,
\end{align}
where $E^{\alpha}(\mathbf{U})$ is the entropy of the hyperbolic system \eqref{hyperbolic_system} defined in \eqref{convex-entropy}.
\end{proposition}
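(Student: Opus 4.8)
The plan is to verify that the Euler--Lagrange equations generated by $\mathcal{L}_\alpha$ reproduce the wave-propagation form \eqref{eq:waveform}, and then to carry out the Legendre transform and identify the resulting Hamiltonian density with $-E^\alpha$. I treat $\eta$ and $c$ as the two independent fields, link the potential to the conserved variable via $u=\eta_x$, and reserve the change of variables \eqref{eq:w_p_psi} --- that is $w=c_t$, $p=c_x$, $\psi=\alpha(u-c)$ --- for the final identification. Note that the augmented Lagrangian density, which I abbreviate as $\ell_\alpha$, carries no $\eta_{xx}$-dependence (the term $\tfrac{|\gamma|}{2}\eta_{xx}^2$ of \eqref{eq:org_Lagrangian} has been traded for the $c$-dependent terms), so the variation with respect to $\eta$ is of two-term type.

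First I would derive the two field equations from
\[
\ell_\alpha = \sgn(\gamma)\Big(\tfrac12\eta_x\eta_t + F(\eta_x)\Big) + \tfrac{|\gamma|}{2}c_x^2 + \tfrac{\alpha}{2}(\eta_x-c)^2 - \tfrac{\beta}{2}c_t^2 .
\]
Since $\ell_\alpha$ has no explicit $\eta$-dependence, the $\eta$-equation reads $\partial_t(\partial\ell_\alpha/\partial\eta_t) + \partial_x(\partial\ell_\alpha/\partial\eta_x)=0$; with $\partial\ell_\alpha/\partial\eta_t=\tfrac12\sgn(\gamma)\eta_x$ and $\partial\ell_\alpha/\partial\eta_x=\tfrac12\sgn(\gamma)\eta_t+\sgn(\gamma)f(\eta_x)+\alpha(\eta_x-c)$ (using $F'=f$), the two half-contributions of $\eta_{xt}$ add up, and after multiplying by $\sgn(\gamma)$ and setting $u=\eta_x$ one recovers exactly ${\eqref{eq:waveform}}_1$, i.e.\ $u_t+f(u)_x=-\sgn(\gamma)\alpha(u-c)_x$. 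The $c$-equation $\partial\ell_\alpha/\partial c-\partial_t(\partial\ell_\alpha/\partial c_t)-\partial_x(\partial\ell_\alpha/\partial c_x)=0$, with $\partial\ell_\alpha/\partial c=-\alpha(\eta_x-c)$, $\partial\ell_\alpha/\partial c_t=-\beta c_t$ and $\partial\ell_\alpha/\partial c_x=|\gamma|c_x$, yields the damped wave equation $\beta c_{tt}-|\gamma|c_{xx}=\alpha(u-c)$, which is ${\eqref{eq:waveform}}_2$. As \eqref{eq:waveform} is equivalent to \eqref{hyperbolic_system} under \eqref{eq:w_p_psi}, this already establishes the Lagrangian, hence Hamiltonian, character of the system.

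For the second assertion I would introduce the conjugate momenta $\pi_\eta:=\partial\ell_\alpha/\partial\eta_t=\tfrac12\sgn(\gamma)\eta_x$ and $\pi_c:=\partial\ell_\alpha/\partial c_t=-\beta c_t$, and compute the Hamiltonian density through the Legendre transform $\pi_\eta\eta_t+\pi_c c_t-\ell_\alpha$. The decisive simplifications are that the term $\tfrac12\sgn(\gamma)\eta_x\eta_t$, being linear in $\eta_t$, occurs identically in $\pi_\eta\eta_t$ and in $\ell_\alpha$ and hence cancels, while the net $c_t^2$-contribution, namely $\pi_c c_t=-\beta c_t^2$ minus the $-\tfrac{\beta}{2}c_t^2$ present in $\ell_\alpha$, equals $-\tfrac{\beta}{2}c_t^2$. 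What remains is
\[
\pi_\eta\eta_t+\pi_c c_t-\ell_\alpha = -\Big[\sgn(\gamma)F(\eta_x)+\tfrac{|\gamma|}{2}c_x^2+\tfrac{\alpha}{2}(\eta_x-c)^2+\tfrac{\beta}{2}c_t^2\Big].
\]
Substituting $u=\eta_x$, $p=c_x$, $w=c_t$ and using $\tfrac{\alpha}{2}(\eta_x-c)^2=\tfrac{1}{2\alpha}\psi^2$ (since $\psi=\alpha(u-c)$), this density matches $-E^\alpha(\mathbf{U})$ term by term, and integrating over $I$ gives $\mathcal{H}_\alpha=-\int_I E^\alpha(\mathbf{U})\,dx$.

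I do not anticipate a genuine obstacle: the computation is essentially careful bookkeeping of variational derivatives. The two points that deserve attention are structural rather than technical. The field $\eta$ enters $\mathcal{L}_\alpha$ only linearly in $\eta_t$ (a degenerate, Clebsch-type Lagrangian), so $\pi_\eta$ carries no $\eta_t$-dependence; this degeneracy is precisely what forces the $\eta_t$-term to cancel and leaves the Hamiltonian free of any kinetic $\eta_t$-contribution. The augmentation $-\tfrac{\beta}{2}c_t^2$ carries a sign-indefinite (negative) kinetic coefficient, and it is this sign that flips the whole density and accounts for $\mathcal{H}_\alpha=-\int_I E^\alpha$ rather than $+\int_I E^\alpha$. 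I would also record that, to be consistent with the gradient $\partial_u E^\alpha=\sgn(\gamma)f(u)$ used in the proof of Lemma~\ref{lemma_entropy}, the $u$-part of $E^\alpha$ in \eqref{convex-entropy} must be read as $\sgn(\gamma)F(u)$ with $F'=f$; with this reading the term-by-term identification above is exact.
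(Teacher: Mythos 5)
Your proof is correct and follows essentially the same route as the paper: derive the two Euler--Lagrange equations from $\mathcal{L}_\alpha$ to recover \eqref{eq:waveform}, then compute the partial Legendre transform $\pi_\eta\eta_t+\pi_c c_t-\ell_\alpha$ and identify the resulting density with $-E^{\alpha}(\mathbf{U})$ under the substitutions $u=\eta_x$, $w=c_t$, $p=c_x$, $\psi=\alpha(u-c)$. Your closing remark is also warranted: the term $\sgn(\gamma)f(u)$ in \eqref{convex-entropy} should indeed be read as $\sgn(\gamma)F(u)$ with $F'=f$, as confirmed by the gradient $\nabla_{\mathbf{U}}E^{\alpha}$ computed in Lemma~\ref{lemma_entropy} and by the form of the energy appearing in \eqref{parabolic_estimate}.
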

\begin{proof}
Indeed, let us consider the so-called augmented Lagrangian
\begin{align*}
	\mathcal{L}_\alpha[\eta_t,\eta_x,c,c_t,c_{x}] = \int_I \prn{\sgn(\gamma)\left(\frac{1}{2}\eta_x\eta_t + F(\eta_x)\right)+ \frac{|\gamma|c_{x}^2}{2}  + \frac{\alpha}{2}\prn{\eta_x - c}^2  - \frac{\beta}{2}c_t^2} dx, 
\end{align*}	
which is built from the Lagrangian \eqref{eq:org_Lagrangian} by substituting $\eta_{xx}$ by $c_x$ and adding two relaxation terms proportional to $\alpha$ and $\beta$, respectively. Then, applying Hamilton's principle to the Lagrangian \eqref{eq:lagrangian_aug}, under the variations of $\eta$ and $c$ respectively, yields the two Euler-Lagrange equations
\begin{subequations}
\begin{align}
\prn{\pd{\mathcal{L_\alpha}}{\eta_t}}_t + \prn{\pd{\mathcal{L}_\alpha}{\eta_x}}_x &= \sgn(\gamma)\prn{\eta_{xt} + f(\eta_x)_x} + \alpha (\eta_x-c)_x = 0, 
\\ -\pd{\mathcal{L}_\alpha}{c}+\prn{\pd{\mathcal{L}_\alpha}{c_t}}_t + \prn{\pd{\mathcal{L}_\alpha}{c_x}}_x  &= -\beta c_{tt}  + |\gamma| c_{xx} + \alpha (\eta_x-c) = 0, 
\label{eq:EL_aug}
\end{align}
\end{subequations}
which again, after substituting $\eta_x =u$ reduce to the system \eqref{eq:waveform}.\\
Now the Hamiltonian density $\mathcal{H}_{\alpha}$ is defined by the partial Legendre transform of the Lagrangian density, and is given by
\begin{align*}
    \mathcal{H}_{\alpha}[\eta_t,\eta_x,c,c_t,c_{x}]=\pd{\mathcal{L_\alpha}}{\eta_t} \eta_t+\pd{\mathcal{L}_\alpha}{c_t} c_t-\mathcal{L_\alpha}[\eta_t,\eta_x,c,c_t,c_{x}].
\end{align*}
Using \eqref{eq:lagrangian_aug}, $\eta_x=u$ and in view of \eqref{convex-entropy}, we have 
\begin{align*}
    \mathcal{H}_{\alpha}[\eta_t,\eta_x,c,c_t,c_{x}]=-\int_{I}E^{\alpha}(\mathbf{U})\, dx.
\end{align*}
The lemma is then proven.
\end{proof}
Lemma \ref{hamiltonian} shows that the system \eqref{hyperbolic_system} also preserves the Hamiltonian and Lagrangian structure of the dispersive equation \eqref{eq:dispersive} as $\alpha\rightarrow \infty$, which is not possible in many relaxation systems. In particular, a Lagrangian interpretation is not available for the 
parabolic approximation from the work of Corli\&Rohde \cite{corli2014parabolic}. Therefore, we 
consider the hyperbolic approximation   \eqref{hyperbolic_system}, the more consistent choice if it is about the approximation of the purely dispersive equation 
\eqref{eq:dispersive}.

\subsection{Second-order approximations of the diffusive-dispersive equation \eqref{main}}\label{sec:main2}
We focus on the complete diffusive-dispersive equation, that is 
\eqref{main} with $\eps>0$ and $\gamma\in \mathbb{R}\setminus \{0\}$.
Thus, we  consider the  equation
\begin{equation}
	u_t + 
    {f(u)}_x = \eps u_{xx} + \gamma u_{xxx},~~\rm{in}~I_T,
	\label{eq:diffusivedispersive}
\end{equation}
subject to periodic boundary conditions and the 
initial condition
\begin{equation}
	u(\cdot,0) = u_0 \mbox{ in } I.
	\label{eq:diffusivedispersive_ini}
\end{equation}
The energy structure of the \eqref{eq:diffusivedispersive} is similar to the energy as defined in \eqref{energy}. Moreover, classical results for local and global well-posedness of classical solutions are available in $H^s$-spaces. In particular, we refer the interested reader to the work of Gallego\&Pazato \cite{Gallegogkdv} for a systematic treatment of generalized KdV-Burgers equations.\\
To approximate the diffusive-dispersive equation \eqref{eq:diffusivedispersive}, we include a viscoelastic damping operator \cite{ikehata2017critical} instead of the pure wave operator in \eqref{eq:waveform}. Precisely, we propose the following approximate system  in $I_T $ for the diffusive-dispersive equation \eqref{eq:diffusivedispersive}: 
\begin{equation}
   \begin{aligned}
u_t + f(u)_x + \sgn(\gamma)\alpha \prn{u - c}_x&=\varepsilon u_{xx}, \\[1.1ex]
    \beta (c_{tt}-\varepsilon c_{xxt})-|\gamma| c_{xx} &= \alpha \prn{u - c}
\end{aligned}
 \label{eq:damped_wave_system}
\end{equation}
With the change of variables 
\[
 w=c_t-\varepsilon c_{xx}, \quad p=c_x, \quad \psi=\alpha (u-c),
\]
one can easily convert the system \eqref{eq:damped_wave_system} into the  mixed- but lower-order  system
\begin{equation}
\begin{aligned}
	u_t + \prn{f(u) + \sgn(\gamma)\psi}_x &=\varepsilon u_{xx}, \\
	\dfrac{1}{\alpha}\psi_t+\prn{f(u)+\sgn(\gamma)  \psi}_x &= - w + \dfrac{\varepsilon}{\alpha} \psi_{xx},\\
		\beta w_{t}-|\gamma| p_{x} &=\psi , \\
	p_t - w_x&=\varepsilon p_{xx}
\end{aligned} \label{hyperbolic_parabolic_system}  \qquad \quad \mbox{in } I_T.
\end{equation}
The system \eqref{hyperbolic_parabolic_system} is complemented with 
the prepared initial data
\begin{equation}
u\rel(\cdot,0)= u_0,   \,  \psi\rel(\cdot,0)= -|\gamma| u_{0,xx},\,
w\rel(\cdot,0) =  - {f(u_0)}_x - |\gamma| u_{0,xxx}, \, 
p\rel(\cdot,0) =   u_{0,x} \mbox{ in }I.
\label{hyperbolic_parabolic_system_ini}
\end{equation}
In what follows, we prove that the entropy/entropy-flux pairs for the first-order system \eqref{hyperbolic_system} are compatible with the system \eqref{hyperbolic_parabolic_system} as well. 

\subsection{Qualitative properties of the second-order approximation}

\subsubsection{Hyperbolic-parabolic structure,  energy dissipation, and local well-posedness}
Let us again omit  in this section the superscript $(.)\rel$ on the  unknowns  and  write ${\mathbf U} = (u, \psi, w, p)^T$ for  $ {\mathbf U}\rel = (u\rel, \psi\rel, w\rel, p\rel)^T\in  \setR^4$.\\ 
With the notations from   \eqref{eq:homogeneous_system}  and the 
positive semi-definite matrix 
\begin{equation*}
{\mathbf D} = 
\begin{pmatrix}
\varepsilon &0 &0 & 0 \\
0 &\varepsilon &0 & 0 \\
0 &0 &0 & 0 \\
0 &0 &0 & \varepsilon 
\end{pmatrix}
\end{equation*}
we write the  system \eqref{hyperbolic_parabolic_system} for some smooth solution $\mathbf U$ in the form 
\begin{equation}
\mathbf{U}_t + {{\mathbf f}(\mathbf{U})}_x = \mathbf{S}\mathbf{U}  +    {\mathbf D} \mathbf{U}_{xx}\Longleftrightarrow
	\mathbf{U}_t + \mathbf{Df}(\mathbf{U})\mathbf{U}_x = 
    \mathbf{S}\mathbf{U} + {\mathbf D} \mathbf{U}_{xx}.
    \label{eq:dissipative_system}
\end{equation} 
From \eqref{eq:dissipative_system} we see that \eqref{hyperbolic_parabolic_system} is a (degenerate) hyperbolic-parabolic system. In the next step we show that for Assumption \ref{ass_fmonotone} imposed, smooth solutions of \eqref{hyperbolic_parabolic_system} dissipate the entropy $E^{\alpha}$ from \eqref{convex-entropy}. 
Recall that smooth solutions of the hyperbolic system \eqref{hyperbolic_system} conserve 
$E^{\alpha}$.

\begin{Lemma}[Entropy dissipation for \eqref{hyperbolic_parabolic_system}]\label{zero_order_estimates}
Let Assumption \ref{ass_fmonotone} hold and let 
$\mathbf U$ be a smooth solution of the hyperbolic-parabolic system \eqref{hyperbolic_parabolic_system} in $I_T$.\\
Then we have for   $E^{\alpha}$ from \eqref{entropy} in $(0,T)$ the  estimate
\begin{equation}\label{parabolic_estimate}
\begin{array}{rcl}
\dfrac{d}{dt} \int_I E^{\alpha}({\mathbf U)}\, dx
&=&\dfrac{d}{dt} \int_{I}\left( \sgn(\gamma)F(u)+\dfrac{|\gamma| p^2}{2}+\dfrac{\psi^2}{2\alpha}+\dfrac{\beta w^2}{2}\right)\, dx\\[1.9ex]
&=& -\varepsilon \displaystyle\int_{I}\left(\sgn(\gamma)f'(u) u_x^2+|\gamma| p_x^2+\dfrac{\psi_x^2}{\alpha}\right) \,dx\\[1.2ex]
&\le & 0.
\end{array}
\end{equation}
\end{Lemma}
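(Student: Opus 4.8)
The plan is to establish the entropy dissipation identity \eqref{parabolic_estimate} by a direct energy estimate, testing the system \eqref{hyperbolic_parabolic_system} against the entropy variables $\nabla_{\mathbf U}E^\alpha(\mathbf U)=(\sgn(\gamma)f(u),\,\psi/\alpha,\,\beta w,\,|\gamma|p)^\top$, which were already computed in the proof of Lemma \ref{lemma_entropy}. Recall from that lemma that the pair $(E^\alpha,Q^\alpha)$ satisfies the compatibility condition \eqref{entropy}, so the convective (first-order) part of the evolution produces only a spatial flux divergence $\partial_x Q^\alpha(\mathbf U)$, while the balance term $\mathbf b(\mathbf U)=\mathbf S\mathbf U$ was shown to satisfy $\nabla_{\mathbf U}E^\alpha(\mathbf U)\cdot\mathbf S\mathbf U=0$. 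These two facts handle everything except the new diffusion term $\mathbf D\mathbf U_{xx}$, so the entire content of the lemma reduces to computing the contribution of $\nabla_{\mathbf U}E^\alpha(\mathbf U)^\top\mathbf D\mathbf U_{xx}$ after integrating over the periodic domain $I$.

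First I would multiply \eqref{eq:dissipative_system} (equivalently \eqref{hyperbolic_parabolic_system}) by $\nabla_{\mathbf U}E^\alpha(\mathbf U)^\top$ and integrate in $x$ over $I$. The left-hand side gives $\frac{d}{dt}\int_I E^\alpha(\mathbf U)\,dx$ by the chain rule, the convective term integrates to $\int_I\partial_xQ^\alpha(\mathbf U)\,dx=0$ under periodic boundary conditions, and the source term vanishes pointwise as noted above. It remains to evaluate
\begin{equation*}
\int_I \nabla_{\mathbf U}E^\alpha(\mathbf U)^\top\,\mathbf D\,\mathbf U_{xx}\,dx
=\int_I\left(\sgn(\gamma)f(u)\,\varepsilon u_{xx}+\frac{\psi}{\alpha}\,\varepsilon\psi_{xx}+|\gamma|p\,\varepsilon p_{xx}\right)dx,
\end{equation*}
where I have used the explicit form of $\mathbf D$, which annihilates the $w$-component. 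Integrating each summand by parts in $x$ and discarding boundary terms by periodicity converts this into $-\varepsilon\int_I\big(\sgn(\gamma)f'(u)u_x^2+\tfrac{1}{\alpha}\psi_x^2+|\gamma|p_x^2\big)\,dx$; note that the first term requires the identity $\partial_x(f(u))=f'(u)u_x$ so that $\int_I f(u)u_{xx}\,dx=-\int_I f'(u)u_x^2\,dx$. This yields exactly the claimed right-hand side of \eqref{parabolic_estimate}.

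The nonpositivity of this expression then follows immediately from Assumption \ref{ass_fmonotone}: the hypothesis $\sgn(\gamma)f'(u)\ge\delta>0$ makes the first integrand nonnegative, while $|\gamma|>0$ and $\alpha>0$ ensure the remaining two terms are nonnegative, so the whole integral is $\ge 0$ and the derivative of the entropy is $\le 0$. I do not expect a genuine obstacle here, since the heavy structural work (the compatibility relation \eqref{entropy} and the annihilation property of $\mathbf S$) was already carried out in Lemma \ref{lemma_entropy}; the only point demanding mild care is the middle step of verifying that the diffusion matrix $\mathbf D$ correctly pairs with the entropy gradient so that the $w$-variable (which carries no diffusion) drops out and the surviving three integration-by-parts each produce a sign-definite quadratic gradient term. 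The periodic boundary conditions are what guarantee all boundary contributions vanish, and the smoothness of $\mathbf U$ justifies every integration by parts.
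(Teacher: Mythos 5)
Your proof is correct and follows essentially the same route as the paper's: there, too, one computes $\dfrac{d}{dt}E^{\alpha}(\mathbf U)$ by the chain rule, substitutes the equations \eqref{hyperbolic_parabolic_system} so that the convective part becomes the exact derivative $-\partial_x Q^{\alpha}(\mathbf U)$ while the source contributions cancel, and then integrates the three diffusion terms by parts under periodic boundary conditions to obtain the sign-definite right-hand side. The only cosmetic difference is that you invoke the compatibility relation \eqref{entropy} and the annihilation property $\nabla_{\mathbf U}E^{\alpha}(\mathbf U)\cdot\mathbf b(\mathbf U)=0$ already recorded around Lemma \ref{lemma_entropy}, whereas the paper redoes that componentwise computation inline.
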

\begin{proof}
We directly take the time derivative of $E^{\alpha}({\mathbf U})$ and compute
\begin{align*}
    \dfrac{d}{dt}E^{\alpha}({\mathbf U})= \sgn(\gamma)f(u) u_t+\beta w w_t+\dfrac{\psi}{\alpha} \psi_t +|\gamma| p p_t.
\end{align*}
Then, in view of \eqref{hyperbolic_parabolic_system}, we have
\begin{align*}
    \dfrac{d}{dt}E^{\alpha}({\mathbf U})=-\dfrac{d}{d x}\left(\dfrac{f(u)^2}{2}+f(u) \psi+\sgn(\gamma)\dfrac{\psi^2}{2}- |\gamma| pw\right)+\varepsilon\left(\sgn(\gamma)f(u) u_{xx}+|\gamma| pp_{xx}+\dfrac{\psi}{\alpha} \psi_{xx}\right).
\end{align*}
Integrating both sides with respect to $x$ and using the periodic boundary conditions, we get the statement of the lemma. 
\end{proof}
In view of Assumption \ref{ass_fmonotone}, the entropy in \eqref{convex-entropy} is strictly convex and therefore the local well-posedness of the hyperbolic-parabolic system \eqref{hyperbolic_parabolic_system} for the class of flux functions satisfying Assumption \ref{ass_fmonotone} can be obtained using the classical theory of symmetrizable hyperbolic-parabolic systems due to Kato \cite{MR390516} or Kawashima \cite{kawashima1984systems}. To be precise, we have the following.
\begin{lemma}[Local well-posedness of the initial value problem for system \eqref{hyperbolic_parabolic_system}]\label{local_existence}
    Let  the function $\mathbf{U}_0=(u_0, \psi_0, w_0, p_0)^\top$ be given such that 
\[
\mathbf{U}_0 \in ({H}^{s}(I))^4, \quad s > 3/2,
\]
holds. Suppose that the flux $f$ satisfies  Assumption \ref{ass_fmonotone}.\\
Then, there exists  for all $\alpha,\beta>0$ and $\gamma\in \mathbb{R}\setminus \{0\}$ a finite time \(T^* \in (0, \infty)\) depending on ${\lVert\mathbf{U}_0 \rVert}_{({H}^{s}(I))^4}$, such that the initial value  problem for \eqref{hyperbolic_parabolic_system} with $\mathbf{U}(\cdot,0)= \mathbf{U}_0$ 
admits a unique classical solution $\mathbf U$, which satisfies
\[
\mathbf{U}(\cdot, t) \in C\!\left([0, T^*]; ({H}^{s}(I))^4\right) 
\cap C^1\!\left([0, T^*]; (H^{s-1}(I))^4\right), \quad s>3/2.
\]
\end{lemma}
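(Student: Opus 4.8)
The plan is to place the system \eqref{hyperbolic_parabolic_system}, written in the quasilinear form \eqref{eq:dissipative_system}, into the abstract class of symmetric hyperbolic-parabolic systems treated by Kato \cite{MR390516} and Kawashima \cite{kawashima1984systems}, using the strictly convex entropy $E^\alpha$ from \eqref{convex-entropy} to manufacture the symmetrizer. First I would compute the entropy Hessian
\[
\mathbf{A}_0(\mathbf{U}) := \nabla^2_{\mathbf{U}} E^\alpha(\mathbf{U}) = \mathrm{diag}\!\left(\sgn(\gamma) f'(u),\ \tfrac{1}{\alpha},\ \beta,\ |\gamma|\right),
\]
and observe that, because Assumption \ref{ass_fmonotone} forces $\sgn(\gamma) f'(u) \ge \delta > 0$ for every $u \in \setR$, the matrix $\mathbf{A}_0$ is symmetric and \emph{uniformly} positive definite on all of $\setR^4$. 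This uniform lower bound is a convenient feature specific to our setting: no localization of the state space is required, in contrast with the generic quasilinear case.

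Second, I would multiply \eqref{eq:dissipative_system} on the left by $\mathbf{A}_0(\mathbf{U})$ and verify the three structural hypotheses. The product $\mathbf{A}_0\,\mathbf{Df}$ is symmetric; this is the differential consequence of the entropy compatibility relation \eqref{entropy} established in Lemma \ref{lemma_entropy}, and it can also be checked by direct computation (using $\beta v_\beta^2 = |\gamma|$). The viscous coefficient becomes $\mathbf{A}_0\mathbf{D} = \mathrm{diag}(\varepsilon\,\sgn(\gamma) f'(u),\ \varepsilon/\alpha,\ 0,\ \varepsilon |\gamma|)$, which is symmetric and positive semidefinite, and whose only null direction is the $w$-component. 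After reordering the unknowns so that the purely hyperbolic variable $w$ is separated from the parabolic triple $(u,\psi,p)$, the diffusion matrix acquires the block form required by Kawashima, with the parabolic block bounded below by $\varepsilon\min\{\delta,\,1/\alpha,\,|\gamma|\} > 0$, hence uniformly elliptic. The coupling between $w$ and $(u,\psi,p)$ enters only through the first- and zero-order terms (the flux and the source $\mathbf{S}\mathbf{U}$), which is permitted in this framework; the dissipation identity of Lemma \ref{zero_order_estimates} is exactly $\frac{d}{dt}\int_I E^\alpha = -\varepsilon\int_I \mathbf{U}_x^\top (\mathbf{A}_0\mathbf{D})\mathbf{U}_x\,dx \le 0$ and serves as a consistency check on this block structure.

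Third, with the symmetric hyperbolic-parabolic structure in hand and coefficients depending smoothly on $\mathbf{U}$ (Assumption \ref{ass_fmonotone} furnishing $|f'|,|f''|\le L$), I would invoke the abstract local existence theorem of Kato/Kawashima on the torus. For data $\mathbf{U}_0 \in (H^s(I))^4$ with $s > 3/2 = d/2 + 1$ (here $d=1$), this yields a unique local solution in the asserted class $C([0,T^*];(H^s(I))^4)\cap C^1([0,T^*];(H^{s-1}(I))^4)$ via the standard linearize-and-iterate scheme combined with Friedrichs-type energy estimates; the parabolic components in fact enjoy additional smoothing, but the statement records only the common regularity.

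I expect the main obstacle to be the careful verification that the \emph{degenerate} diffusion matrix $\mathbf{D}$, which vanishes identically on the $w$-slot, genuinely fits the block-structure hypothesis of the hyperbolic-parabolic theory rather than merely producing a degenerate parabolic problem. Concretely, one must confirm that after symmetrization the null direction of $\mathbf{A}_0\mathbf{D}$ aligns with a single hyperbolic mode, that the parabolic block is uniformly elliptic, and that all inter-block coupling is of lower order; once these are checked, the remaining energy estimates and the fixed-point argument are entirely standard. A secondary, purely bookkeeping point is to transcribe the classical results \cite{MR390516, kawashima1984systems} into the periodic setting, which is routine.
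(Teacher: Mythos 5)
Your proposal is correct and follows essentially the same route as the paper: the paper likewise symmetrizes \eqref{eq:dissipative_system} with the entropy Hessian $\nabla^2_{\mathbf{U}} E^{\alpha}(\mathbf{U})$ (the matrices you compute coincide with the paper's $\mathbf{S}_0$, $\mathbf{S}_1$, $\mathbf{S}_2$ in the proof of Theorem \ref{global_wellposedness}) and then invokes the classical Kato/Kawashima theory of symmetrizable hyperbolic-parabolic systems. Your additional care in checking that the degenerate diffusion block (the $w$-slot) fits Kawashima's block structure is a worthwhile elaboration of what the paper leaves implicit, but it is not a different method.
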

\begin{remark}\label{remark_non_convex}
\begin{enumerate}
    \item Note that the entropy dissipation for our system \eqref{hyperbolic_parabolic_system} in Lemma \ref{zero_order_estimates} coincides with the energy dissipation for the elliptic approximation \eqref{eq:scrn_poiss} (see Lemma 3.3 in \cite{corli2012singular}). In particular, this shows that the proposed system \eqref{hyperbolic_parabolic_system} does not introduce any additional diffusion into the dynamics. In contrast, the parabolic approximation in \cite{corli2014parabolic} exhibits stronger diffusive effects than the proposed system \eqref{hyperbolic_parabolic_system} (see Lemma 3.5 in \cite{corli2014parabolic}).
\item The condition on the convexity of $\sgn(\gamma)F(u)$ for the diffusive-dispersive case can be further relaxed. Assume that $\sgn(\gamma)f'(u)\geq -\bar{f}$ for some $\bar{f}>0$. Then one can choose a number $\kappa$ such that $\kappa+\bar{f}>\delta>0$. In that case, we can have a similar energy inequality with a shift in $\sgn(\gamma)F(u)$ given by $\sgn(\gamma)F(u)+\dfrac{\kappa u^2}{2}$, which still makes the energy convex. However, this can only work for the diffusive-dispersive equation, as it gives enough dissipation to absorb the residual terms for a fixed $\varepsilon>0$.   
\item Note that the system \eqref{hyperbolic_system} preserves the energy of the limit equation \eqref{eq:diffusivedispersive} regardless of the choice of flux functions. In this work, we restrict ourselves to the class of flux functions satisfying Assumption~\ref{ass_fmonotone}, as the convergence analysis in Section~\ref{sec: convergence} relies on the uniform convexity of the entropy $E_\alpha$. Nevertheless, the numerical examples in Section~\ref{sec: Numerics} indicate that the approximate system \eqref{hyperbolic_system} provides accurate approximations even for equations, where the flux function does not satisfy Assumption~\ref{ass_fmonotone}.
\end{enumerate}
\end{remark}
\subsubsection{Global well-posedness of classical solutions of \eqref{hyperbolic_parabolic_system} for prepared initial data}
In this section, we prove that the local solutions developed in Lemma \ref{local_existence} can be extended to $(0, \infty)$ as long as the initial data is in a sufficiently small neighborhood of a constant equilibrium point $\mathbf{U}^*=(u^*, 0, 0, 0)^\top$. In particular, we prove that the system \eqref{hyperbolic_parabolic_system} belongs to the class of partially diffusive systems satisfying the Shizuta-Kawashima (SK) condition \cite{shizuta1985systems, kawashima1984systems}. Precisely, we prove the following.
\begin{theorem}\label{global_wellposedness}
  Let $\alpha,\beta>0$ and $\gamma\in \mathbb{R}\setminus \{0\}$ be given. Suppose that the flux $f$ satisfies  Assumption \ref{ass_fmonotone}. Then there exists a $\Lambda>0$ such that for all $\mathbf{U}_0=(u_0, \psi_0, w_0, p_0)^\top\in ({H}^{s}(I))^4$, $s \geq 2, ~s\in \mathbb{N}$, with
  \[
 \left(
 {\lVert  u_0-u^*\rVert}_{{H}^{s}(I)}^2+\dfrac{1}{\alpha}{\lVert  \psi_0\rVert}_{{H}^{s}(I)}^2+\beta{\lVert w_0\rVert}_{{H}^{s}(I)}^2+|\gamma|{\lVert p_0\rVert}_{{H}^{s}(I)}^2
 \right)<\Lambda.
  \]
the initial value  problem for \eqref{hyperbolic_parabolic_system} has a unique classical solution $\mathbf{U}$ with $\mathbf{U}(\cdot,0)= \mathbf{U}_0$. It satisfies 
  \begin{align}
     \mathbf{U}-\mathbf{U}^*\in C([0, \infty); ({H}^{s}(I))^4)\cap  C^1([0, \infty); ({H}^{s-1}(I))^4).
  \end{align}
Furthermore, we have for a positive constant $C=C(\Lambda)$ the 
estimate
\begin{equation}\label{energy-estimates}
\begin{aligned}
\sup_{0\leq t<\infty}&\left({\lVert u-u^*\rVert}_{{H}^{s}(I)}^2+\dfrac{1}{\alpha}{\lVert \psi\rVert}_{{H}^{s}(I)}^2+\beta {\lVert w\rVert}_{{H}^{s}(I)}^2+|\gamma|{\lVert p\rVert}_{{H}^{s}(I)}^2\right)\\
&+ \varepsilon \int_{0}^{t}\left({\lVert  u-u^*\rVert}_{{H}^{s+1}(I)}+|\gamma|{\lVert  p\rVert}_{{H}^{s+1}(I)}+\dfrac{1}{\alpha}{\left\lVert \psi\right\rVert}_{{H}^{s+1}(I)}\right)\, d \tau\\
&\leq C(\Lambda)\bigg({\lVert u_0-u^*\rVert}_{{H}^{s}(I)}^2+\dfrac{1}{\alpha}{\lVert \psi_0\rVert}_{{H}^{s}(I)}^2+\beta {\lVert w_0\rVert}_{{H}^{s}(I)}^2+|\gamma|{\lVert p_0\rVert}_{{H}^{s}(I)}^2\bigg).
\end{aligned}
\end{equation}
\end{theorem}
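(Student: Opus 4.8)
The plan is to combine the local existence of Lemma~\ref{local_existence} with a uniform-in-time a~priori estimate and a continuation argument, in the spirit of the energy method of Kato~\cite{MR390516} and Kawashima~\cite{kawashima1984systems} for symmetrizable hyperbolic--parabolic systems. First I would pass to the perturbation $\mathbf{V} := \mathbf{U}-\mathbf{U}^*$, noting that $\mathbf{U}^*=(u^*,0,0,0)^\top$ is a constant equilibrium of \eqref{hyperbolic_parabolic_system}, and symmetrize \eqref{eq:dissipative_system} with the Hessian $A^0(\mathbf{U}) := \nabla^2_{\mathbf U}E^{\alpha}(\mathbf{U}) = \mathrm{diag}\big(\sgn(\gamma)f'(u),\,\alpha^{-1},\,\beta,\,|\gamma|\big)$ of the entropy \eqref{convex-entropy}. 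Under Assumption~\ref{ass_fmonotone} this $A^0$ is \emph{globally} uniformly positive definite (two-sided bounds from $\delta\le\sgn(\gamma)f'\le L$), the product $A^0\,\mathbf{Df}$ is symmetric, the diffusion $A^0\mathbf{D}$ is symmetric positive semi-definite with kernel exactly the $w$-direction, and one checks that $A^0\mathbf{S}$ is skew-symmetric (equivalently $\nabla_{\mathbf U}E^{\alpha}\cdot\mathbf{b}=0$), so the relaxation term is entropy-neutral. Finally, since $\mathbf{Df}\,(0,0,1,0)^\top=(0,0,0,-1)^\top$, the $w$-direction spanning $\ker\mathbf{D}$ is not an eigenvector of the hyperbolic flux; this is precisely the genuine-coupling (Shizuta--Kawashima) condition~\cite{shizuta1985systems,kawashima1984systems}, which ensures the degeneracy of $\mathbf{D}$ is benign.

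With this structure, the global statement reduces by a standard continuity argument to the a~priori bound
\begin{equation*}
\frac{d}{dt}\,\mathcal{E}_s(t) + c\,\varepsilon\,\mathcal{D}_s(t) \;\le\; C\,\mathcal{E}_s(t)^{1/2}\,\mathcal{D}_s(t),
\end{equation*}
where $\mathcal{E}_s := \sum_{k=0}^{s}\tfrac{1}{2}\int_I\big\langle A^0\,\partial_x^k\mathbf{V},\,\partial_x^k\mathbf{V}\big\rangle\,dx$ is equivalent to $\|u-u^*\|_{H^s}^2+\tfrac1\alpha\|\psi\|_{H^s}^2+\beta\|w\|_{H^s}^2+|\gamma|\|p\|_{H^s}^2$, and $\mathcal{D}_s := \|\partial_x u\|_{H^s}^2+\tfrac1\alpha\|\partial_x\psi\|_{H^s}^2+|\gamma|\|\partial_x p\|_{H^s}^2$ is the dissipation furnished by the three diffusive components. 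I would establish this by applying $\partial_x^k$ to \eqref{eq:dissipative_system} for $0\le k\le s$, pairing with $A^0\,\partial_x^k\mathbf{V}$, and summing; the case $k=0$ is exactly Lemma~\ref{zero_order_estimates} rewritten for the perturbation.

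The decisive structural observation, which I would exploit at every order, is that the nonlinearity enters \eqref{hyperbolic_parabolic_system} only through $f(u)$, i.e.\ only in the upper-left $(u,\psi)$ block of $A^0\,\mathbf{Df}$, while its $(w,p)$ block $\bigl(\begin{smallmatrix}0&-|\gamma|\\-|\gamma|&0\end{smallmatrix}\bigr)$ and the matrix $\mathbf{S}$ are constant. Consequently, integrating the symmetric transport term by parts over the periodic domain, the constant $(w,p)$ block contributes a perfect $x$-derivative that vanishes, the skew source drops out, and every remaining commutator and variable-coefficient error is a product of derivatives of $f'(u)$ with derivatives of the dissipated components $u$ and $\psi$ alone---no uncontrolled derivative of $w$ ever appears. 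These error terms I would bound by Moser-type and Gagliardo--Nirenberg inequalities together with the one-dimensional embedding $H^s(I)\hookrightarrow L^\infty(I)$ for $s\ge2$, producing the factor $\mathcal{E}_s^{1/2}\mathcal{D}_s$ above; here Assumption~\ref{ass_fmonotone} already renders $A^0$ uniformly positive definite, so smallness is needed only to absorb the nonlinear prefactor $C\mathcal{E}_s^{1/2}$.

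Once the differential inequality is in hand, choosing $\Lambda$ so small that $C\,\mathcal{E}_s^{1/2}\le \tfrac12 c\,\varepsilon$ on $\{\mathcal{E}_s<\Lambda\}$ yields $\frac{d}{dt}\mathcal{E}_s+\tfrac12 c\,\varepsilon\,\mathcal{D}_s\le0$; hence $\mathcal{E}_s$ is non-increasing, remains below $\Lambda$, and $\int_0^\infty\mathcal{D}_s\,d\tau<\infty$. The usual open--closed continuity argument then upgrades the local solution of Lemma~\ref{local_existence} to a global one and, after integration in time, delivers \eqref{energy-estimates}. I expect the main obstacle to be the bookkeeping in the higher-order step: one must verify carefully that the entropy symmetrization really does confine all nonlinear and commutator errors to the diffusive $(u,\psi)$-components---so that the absent $w$-dissipation is never required to close the estimate---while keeping every constant independent of $t$. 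Note that, because no dissipation is available for $w$, the argument yields only boundedness, not decay, of $\|w\|_{H^s}$, which is exactly what \eqref{energy-estimates} records.
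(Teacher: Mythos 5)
Your proposal is correct, and it rests on exactly the same structural pillars as the paper's proof: symmetrization of \eqref{eq:dissipative_system} by the entropy Hessian $\nabla^2_{\mathbf U}E^{\alpha}(\mathbf U)$, skew-symmetry of the symmetrized source (so that $\partial_x^k(\mathbf U-\mathbf U^*)^\top \nabla^2_{\mathbf U}E^{\alpha}\,\mathbf S\,\partial_x^k(\mathbf U-\mathbf U^*)=0$ and the relaxation term never enters the $H^k$-energy identities), and the Shizuta--Kawashima condition ($\ker\mathbf D$ spanned by the $w$-direction, which is not an eigenvector of the flux). Where you part ways with the paper is the concluding step: the paper, having verified these hypotheses, simply invokes the classical global-existence theorem for partially diffusive symmetric hyperbolic--parabolic systems (Theorem 4.3 in \cite{kawashima1984systems}), whereas you re-derive the global a priori bound by hand via the differential inequality $\frac{d}{dt}\mathcal E_s+c\,\varepsilon\,\mathcal D_s\le C\,\mathcal E_s^{1/2}\mathcal D_s$ and a continuation argument. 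The paper's route is shorter and inherits the full strength of the cited theory; your route is self-contained and makes explicit a fact the citation hides, namely that for this particular system the general compensating-matrix machinery (which is how the SK condition is normally exploited to manufacture dissipation in $\ker\mathbf D$) is not needed to close the existence estimate: since the nonlinearity sits only in the $(u,\psi)$-block while the $(w,p)$-block and $\mathbf S$ are constant-coefficient, the $w$-coupling cancels exactly by skew-symmetry and every commutator error is a product of derivatives of the dissipated components $u,\psi$ alone --- so SK is really only needed for decay or to invoke the black-box theorem, a point your closing remark about boundedness-versus-decay of $\lVert w\rVert_{H^s}$ correctly reflects. One bookkeeping item you flagged does deserve care: the term $\varepsilon\int_I f''(u)\,u_{xx}\,\lvert\partial_x^s u\rvert^2\,dx$ coming from differentiating the symmetrizer in time is cubic in dissipated quantities and must be split by Gagliardo--Nirenberg interpolation (in one dimension) to land in the admissible form $C\,\mathcal E_s^{1/2}\mathcal D_s$; this works, so your scheme closes, but it is precisely the step where a naive bound would produce $\varepsilon\,\mathcal D_s^{3/2}$ and break the absorption argument.
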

\begin{proof}
Since the system \eqref{hyperbolic_parabolic_system} is endowed with an entropy/entropy-flux pair, it can be converted into a symmetric hyperbolic-parabolic form by using the Hessian of the entropy $E^{\alpha}(\mathbf{U})$ as its symmetrizer. By multiplying the equation \eqref{eq:dissipative_system} with $\nabla^2_{\mathbf{U}} E^{\alpha} (\mathbf{U})$, we obtain the symmetric form of \eqref{eq:dissipative_system} as
\begin{align}\label{symmetric_form} \mathbf{S}_0(\mathbf{U})\mathbf{U}_t+\mathbf{S}_1(\mathbf{U})\mathbf{U}_x-\mathbf{S_2(U)}\mathbf{U}_{xx}=\mathbf{S_3(U)\mathbf{U}},
 \end{align}
 where
\begin{align*}
    \mathbf{S_0(U)}&=\nabla^2_{\mathbf{U}} E^{\alpha} (\mathbf{U})=\begin{pmatrix}
\sgn(\gamma)f'(u) &0 &0 & 0 \\
0 & 1/\alpha &0 & 0 \\
0 &0 &\beta & 0 \\
0 &0 &0 & |\gamma| 
\end{pmatrix},\\
\mathbf{S_1(U)}&=\nabla^2_{\mathbf{U}} E^{\alpha}(\mathbf{U})\mathbf{Df(U)}=\begin{pmatrix}
(f'(u))^2 & f'(u) &0 & 0 \\
f'(u) & \sgn(\gamma) &0 & 0 \\
0 &0 &0 & -|\gamma| \\
0 &0 &-|\gamma| &0 
\end{pmatrix},\\
\mathbf{S_2(U)}&=\nabla^2_{\mathbf{U}} E^{\alpha} (\mathbf{U})\mathbf{D}=\begin{pmatrix}
\varepsilon \sgn(\gamma) f'(u) &0 &0 & 0 \\
0 & \varepsilon/\alpha &0 & 0 \\
0 &0 &0 & 0 \\
0 &0 &0 & |\gamma| \varepsilon
\end{pmatrix}, \\
\mathbf{S_3(U)}&=\nabla^2_{\mathbf{U}} E^{\alpha} (\mathbf{U})\mathbf{S}=
\begin{pmatrix}
0 &0 &0 & 0 \\
0 & 0 &-1 & 0 \\
0 &1 &0 & 0 \\
0 &0 &0 & 0
\end{pmatrix}.
\end{align*}
Clearly, the system \eqref{symmetric_form} belongs to the general class of symmetric partially diffusive hyperbolic-parabolic systems with a skew-symmetric source matrix. \\
We first verify that the flux and the diffusion matrix of the system \eqref{symmetric_form} satisfy the Shizuta-Kawashima (SK) conditions \cite{shizuta1985systems, kawashima1984systems}. The kernel of the diffusion matrix $\mathbf{S_2(U)}$ is given by
\[
\ker \mathbf{S_2(U)}
= \big\{ (0,0,w,0)^{\top} : w \in \mathbb{R} \big\}
= \operatorname{span}\big\{ (0,0,1,0)^{\top} \big\}.
\]
A straightforward computation shows that no eigenvectors of $\mathbf{S_1(U)}$ lies in $\ker \mathbf{S_2(U)}$ and thus we have for all eigenvalues $\lambda$ of  $\mathbf{S_1(U)}$ 
\[
\ker \mathbf{S_2(U)}\cap \ker (\mathbf{S_1(U)}-\lambda \mathbf{I})=\{0\}.
\]
This shows that the system \eqref{hyperbolic_parabolic_system} or equivalently \eqref{eq:dissipative_system} belongs to the partially diffusive systems satisfying the SK coupling conditions. Moreover, the source term does not contribute to the higher-order energy dissipation (or increment). To be precise, it is easy to see that the source term vanishes after multiplying by energy multipliers for $H^s$-estimates ($s=0, 1, 2, \ldots$), or in other words
\[
\partial_x^s (\mathbf{U}-\mathbf{U}^*)^\top \mathbf{S_3(U)}\partial_x^s (\mathbf{U}-\mathbf{U}^*)=0, \quad s=0, 1, 2, \ldots.
\]
This, in particular, allows us to follow the classical theory for the global well-posedness of smooth solutions of partially diffusive systems (see e.g. Theorem 4.3 in \cite{kawashima1984systems}). Therefore, the results of the theorem are a direct consequence of the classical theory due to Kawashima \cite{kawashima1984systems}.
\end{proof}
\begin{remark}
 There are recent results on the local and global well-posedness for partially diffusive systems where the initial data need less regularity and, e.g., belong to some Besov space instead. We refer the interested reader to the recent work of Adogbo\&Danchin \cite{Adogbo_JDE} for more details. However, for our analysis, we work with $H^s$-spaces. 
\end{remark}

\section{Hyperbolic approximations and asymptotics}\label{sec: convergence}
In this section, we first apply a Chapman–Enskog expansion on the variables of the system \eqref{eq:waveform} and \eqref{eq:damped_wave_system} to provide a formal justification of the consistency of the proposed system(s) as $\alpha \rightarrow \infty$. The Chapman–Enskog expansion provides an asymptotic expansion for the solutions of \eqref{eq:waveform} and \eqref{eq:damped_wave_system} and identifies \eqref{eq:dispersive} and \eqref{eq:diffusivedispersive} as the limit equation(s), respectively. However, this argument is not rigorous at the level of entropy solutions. To obtain a rigorous convergence result, we complement the formal expansion with a relative entropy framework to the systems \eqref{hyperbolic_system} and \eqref{hyperbolic_parabolic_system}. Precisely, we prove the convergence of the solutions of approximate systems to the classical solutions of \eqref{eq:diffusivedispersive} for $\varepsilon\geq 0$ and the flux functions satisfying Assumption \ref{ass_fmonotone}.\medskip\\
Throughout this section, we denote the solutions of the approximate systems \eqref{hyperbolic_system} and \eqref{hyperbolic_parabolic_system} as $\mathbf{U}\rel=(u\rel, \psi\rel, w\rel, p\rel)^\top$ and the solution of the diffusive-dispersive equation \eqref{eq:diffusivedispersive} by $\mathbf{{U}}=({u}, {\psi}, {w}, {p})^\top=({u}, -|\gamma| {u}_{xx}, {u}_t, {u}_x)^\top$ for $\varepsilon\geq 0$. Based on the proposed systems \eqref{hyperbolic_system} and \eqref{hyperbolic_parabolic_system}, we divide our convergence analysis into two different sections.  
\subsection{Dispersive asymptotics  for the first-order hyperbolic system \eqref{hyperbolic_system}}
We  discuss  the asymptotics for $\alpha \to \infty$ of the sequence ${\{ {\mathbf U}^\alpha\}}_{\alpha >0}$ solving  \eqref{hyperbolic_system} for prepared initial data \eqref{hyperbolic_parabolic_system_ini} and periodic boundary conditions. We will show in Theorem \ref{relative_entropy_theorem_1}
that the sequence converges to a solution of the dispersive equation 
\eqref{eq:dispersive} subject to the initial datum from \eqref{eq:dispersive_ini}. \\
To motivate a scaling of $\beta$ in terms of the parameter $\alpha$ we 
consider Chapman-Enskog expansions. Precisely we choose $\beta = \alpha^{-1}$ and assume that the solutions of the system \eqref{eq:waveform} admit a smooth expansion in the parameter $\alpha$ such that 
\[
\begin{array}{rcl}
u^\alpha(x,t) &= &u^0(x,t) + \alpha^{-1} u^1(x,t) + \alpha^{-2} u^2(x,t) + \mathcal{O}(\alpha^{-3}), \\
c^\alpha(x,t) &=& c^0(x,t) + \alpha^{-1} c^1(x,t) + \alpha^{-2} c^2(x,t) + \mathcal{O}(\alpha^{-3}).
\end{array}
\]
Inserting these expansions in $\eqref{eq:waveform}_2$ allows to write
\begin{equation}
 \alpha \prn{u^0 - c^0} + \prn{u^1 - c^1 + |\gamma| c^0_{xx}} + \alpha^{-1} (u^2-c^2 -c^0_{tt}+ |\gamma| c^1_{xx})  =  \mathcal{O}(\alpha^{-2}).
\label{eq:series_1}
\end{equation}
Then, successively equating the series coefficients to zero in \eqref{eq:series_1} leads to
\begin{equation}
u^0 - c^0 =0, \quad u^1 - c^1 = -|\gamma| c^0_{xx}, \quad  u^2 - c^2 = c^0_{tt}-|\gamma| c^1_{xx},
\end{equation}
and therefore
\begin{align*}
\alpha\prn{u^\alpha-c^\alpha} &= -|\gamma| c^0_{xx} +\alpha^{-1}(c^0_{tt}-|\gamma| c^1_{xx}) + \mathcal{O}(\alpha^{-2}) \\
    &= -|\gamma| u^0_{xx} + \alpha^{-1} (u^0_{tt}-|\gamma|\prn{u^1 + |\gamma| u^0_{xx}}_{xx}) + \mathcal{O}(\alpha^{-2}) \\ 
    &= -|\gamma| \prn{u^0 + \alpha^{-1} u^1}_{xx} + \alpha^{-1}(u^0_{tt} -\gamma^2 u^0_{xxxx} + \mathcal{O}(\alpha^{-2}) \\ 
& = -|\gamma| u^\alpha_{xx} +\alpha^{-1} (u\rel_{tt} -\gamma^2 {u}^\alpha_{xxxx}) + \mathcal{O}(\alpha^{-2}).
\end{align*}
Using the last relation in  $\eqref{eq:waveform}_1$ finally yields
\begin{equation}
     u\rel_t + f(u\rel)_x = \sgn(\gamma)|\gamma| u\rel_{xxx}  + \alpha^{-1} (\gamma^2u\rel_{xxxxx}-u\rel_{ttx} ) + \mathcal{O}(\alpha^{-2}).
\end{equation}
Thus, given the suggested scaling $\beta = \alpha^{-1}$, the leading order error term with respect to the dispersive equation \eqref{eq:dispersive} is first-order in $\alpha^{-1}$.
\bigskip\\
In what follows, we prove that  weak entropy solutions of \eqref{hyperbolic_system}-\eqref{IC_BC_Hyp} converge to the solutions of the dispersive equation \eqref{eq:dispersive} when the 
parameter  $\alpha\rightarrow \infty$ as our first main result.
\begin{theorem}[Main result I]\label{relative_entropy_theorem_1}
 Let   ${u}_0\in H^6(I)$, and suppose that Assumption \ref{ass_fmonotone} holds. We assume  that 
 ${u}\in {L}^{2}(0, T;H^3(I))$ with ${u}_{tt}\in {L}^{2}(I_T)$ and ${\lVert{u}\rVert}_{L^\infty(0, T;H^4(I))}<\infty$
 is a classical solution of \eqref{eq:dispersive}, \eqref{eq:dispersive_ini}. Consider for $\beta= \alpha^{-1}$, a 
 sequence   ${\{\mathbf{U}^\alpha=(u^\alpha, \psi^\alpha, w^\alpha, p^\alpha)^\top\}}_{\alpha>0}$  
of  weak entropy solutions 
 of \eqref{hyperbolic_system}, \eqref{IC_BC_Hyp} in the sense of Definition \ref{entropy_soln}.\\
   Then there is a  constant $C= C(L,\delta,  u)\ge 0$ that is independent of $\alpha$ such that
    \begin{align}\label{convergence_rates}
        {\lVert u\rel-{u}\rVert}_{L^{\infty}\big(0, T; {L}^{2}(I)\big)}+ {\lVert p\rel-{u}_x \rVert}_{L^{\infty}\big(0, T; {L}^{2}(I)\big)}\le C\sqrt{\dfrac{1}{\alpha}}
    \end{align}
    holds.
\end{theorem}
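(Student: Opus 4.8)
The plan is to run a relative-entropy (weak--strong) stability argument comparing the weak entropy solution $\mathbf{U}^\alpha$ against the smooth comparison state $\mathbf{U} = (u,\,-|\gamma| u_{xx},\,u_t,\,u_x)^\top$ built from the exact dispersive solution. First I would insert $\mathbf{U}$ into the approximate system \eqref{hyperbolic_system} and record the residual $\mathbf{R} := \mathbf{U}_t + \mathbf{f}(\mathbf{U})_x - \mathbf{b}(\mathbf{U})$. Using $u_t + f(u)_x = \gamma u_{xxx}$ together with $p = u_x$, $w = u_t$, $\psi = -|\gamma| u_{xx}$ and $v_\beta^2 = |\gamma|/\beta$, the first and fourth equations hold exactly, while the second and third leave
\[
\mathbf{R} = \big(0,\ -|\gamma| u_{xxt},\ u_{tt},\ 0\big)^\top .
\]
The prepared data \eqref{IC_BC_Hyp} is chosen so that $\mathbf{U}^\alpha(\cdot,0) = \mathbf{U}(\cdot,0)$, whence the initial relative entropy vanishes, and the hypotheses $u_0\in H^6$ together with the stated bounds guarantee $u_{xxt},u_{tt}\in L^2(I_T)$, i.e. $\mathbf{R}\in L^2(I_T;\mathbb{R}^4)$.

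Next I would set $\mathcal{E}^\alpha(\mathbf{U}^\alpha|\mathbf{U}) = E^\alpha(\mathbf{U}^\alpha) - E^\alpha(\mathbf{U}) - \nabla E^\alpha(\mathbf{U})\cdot(\mathbf{U}^\alpha-\mathbf{U})$. Since $\nabla^2 E^\alpha(\mathbf{U}) = \mathrm{diag}(\sgn(\gamma)f'(u),\,1/\alpha,\,\beta,\,|\gamma|)$ and Assumption $\mathcal{A}$ gives $\sgn(\gamma)f'(u)\ge\delta$, uniform convexity yields the lower bound
\[
\int_I \mathcal{E}^\alpha(\mathbf{U}^\alpha|\mathbf{U})\,dx \ \ge\ \tfrac12\int_I\Big(\delta|u^\alpha-u|^2 + |\gamma|\,|p^\alpha-u_x|^2 + \tfrac1\alpha|\psi^\alpha-\psi|^2 + \beta|w^\alpha-w|^2\Big)\,dx,
\]
which already controls the two quantities in \eqref{convergence_rates}. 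I would then differentiate $\int_I\mathcal{E}^\alpha\,dx$ in time, using the weak entropy inequality of Definition \ref{entropy_soln} for $\mathbf{U}^\alpha$ (tested against $\nabla E^\alpha(\mathbf{U})$) and differentiating the $\mathbf{U}$-dependent pieces classically. Because the only nonlinearity sits in $f(u)$, the flux defect reduces to a relative flux $\mathbf{f}(\mathbf{U}^\alpha|\mathbf{U})$ that is quadratic in $u^\alpha-u$; crucially the factor $\alpha$ in the second flux component cancels the weight $1/\alpha$ in $\nabla^2 E^\alpha$, so the defect is bounded by $C(L,\|u\|_{L^\infty(0,T;H^4)})\int_I\mathcal{E}^\alpha\,dx$. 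The source contribution vanishes identically since $\nabla^2 E^\alpha(\mathbf{U})\mathbf{S}=\mathbf{S_3}$ is skew-symmetric, so $V^\top\mathbf{S_3}\,V=0$ with $V=\mathbf{U}^\alpha-\mathbf{U}$.

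The decisive term is the residual coupling $-\int_I V^\top\nabla^2 E^\alpha(\mathbf{U})\mathbf{R}\,dx$, where the weights pay off: the nonzero slots of $\nabla^2 E^\alpha(\mathbf{U})\mathbf{R}$ carry the factors $1/\alpha$ and $\beta=\alpha^{-1}$, giving
\[
\Big|\int_I V^\top\nabla^2 E^\alpha(\mathbf{U})\mathbf{R}\,dx\Big| \ \le\ \frac{|\gamma|}{\alpha}\int_I|\psi^\alpha-\psi|\,|u_{xxt}|\,dx + \beta\int_I|w^\alpha-w|\,|u_{tt}|\,dx.
\]
A Young inequality splits each term into a small multiple of the matching relative-entropy density (absorbed on the left) plus $\tfrac{C}{\alpha}\big(|u_{xxt}|^2+|u_{tt}|^2\big)$; integrating in time and using $\mathbf{R}\in L^2(I_T)$ produces a residual budget of size $C\alpha^{-1}$. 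Gronwall's lemma applied to $\frac{d}{dt}\int_I\mathcal{E}^\alpha \le C\int_I\mathcal{E}^\alpha + C\alpha^{-1}\rho(t)$ with $\int_I\mathcal{E}^\alpha|_{t=0}=0$ and $\int_0^T\rho\,dt<\infty$ then gives $\sup_{[0,T]}\int_I\mathcal{E}^\alpha\,dx\le C\alpha^{-1}$, and the convexity lower bound converts this into the claimed $\mathcal{O}(\alpha^{-1/2})$ rate.

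I expect the main obstacle to be twofold. First, the relative-entropy identity must be justified for merely weak (possibly discontinuous) entropy solutions $\mathbf{U}^\alpha$ against the smooth $\mathbf{U}$: the weak-formulation bookkeeping of the flux and source integrals, and the passage from the distributional entropy inequality to the differential inequality above, require care. Second, and more essentially, one must arrange the residual weights so that the coupling term is genuinely $\mathcal{O}(\alpha^{-1})$ rather than $\mathcal{O}(1)$; this is precisely where the scaling $\beta=\alpha^{-1}$ and the high regularity ($u_0\in H^6$, ensuring $u_{xxt},u_{tt}\in L^2(I_T)$) are both indispensable, since without the matching $1/\alpha$ factors supplied by $\nabla^2 E^\alpha$ the estimate would not close at any rate.
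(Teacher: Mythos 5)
Your proposal is correct and follows essentially the same route as the paper's proof: the same residual $\mathbf{R}=(0,\,-|\gamma| u_{xxt},\,u_{tt},\,0)^\top$, the same relative entropy with the convexity lower bound, the same observation that the $\alpha$-factor in the second flux component cancels the $1/\alpha$ Hessian weight so the flux defect is controlled by $\mathcal{E}^{\alpha}$, the same skew-symmetry cancellation of the pure source terms, and the same Young-plus-Gronwall closure with prepared data and $\beta=\alpha^{-1}$. The only difference is presentational: you state a formal differential inequality $\frac{d}{dt}\int_I \mathcal{E}^{\alpha}\,dx \le C\int_I \mathcal{E}^{\alpha}\,dx + C\alpha^{-1}\rho(t)$, whereas the paper, since $\mathbf{U}^{\alpha}$ is only a weak entropy solution, derives the Gronwall-ready integral inequality rigorously by testing the entropy inequality and the weak formulation (with ${\bm\varphi}=\varphi\,\nabla_{\mathbf{U}}E^{\alpha}(\mathbf{U})$) against Dafermos-style Lipschitz time cutoffs $\vartheta$ and letting the cutoff width tend to zero --- precisely the point of care you flag at the end.
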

\begin{proof}  
With $\psi =-|\gamma| {u}_{xx}$, $ w={u}_t $ and $ p = {u}_x$, we rewrite the dispersive limit equation \eqref{eq:dispersive} in terms of the approximate system \eqref{hyperbolic_system}, which leads to the system
\begin{equation}\label{reformulated_dispersion}
\begin{aligned}
{u}_t+f({u})_x+\sgn(\gamma){\psi}_x&=0,\\
    \dfrac{1}{\alpha}{\psi}_t+f({u})_x+\sgn(\gamma){\psi}_x&=-{w}-\dfrac{1}{\alpha} |\gamma| {u}_{xxt},\\
    \beta {w}_t-|\gamma| {p}_x&={\psi}+\beta {u}_{tt},\\
    {p}_t-{w}_x&=0.
\end{aligned}
\end{equation}
Define $ {\mathbf U} = ( u, \psi, w, p)^\top$ and fix 
the  initial datum $ {\mathbf U}_0 = ( u_0, -|\gamma|  u_{0, xx}, -f(u_0)_x-|\gamma| u_{0, xxx}, {u}_{0, x})^\top$. 
Since ${u}$ is a classical solution of the initial value problem for \eqref{eq:dispersive} with $  u (\cdot,0)= u_0$, the function
 $  {\mathbf U}$ 
is a classical solution of \eqref{reformulated_dispersion} with
${\mathbf U} (\cdot,0) =   {\mathbf U}_0$.\\
It is then straightforward to check 
that ${\mathbf U} $  satisfies for  
the entropy/entropy-flux  pair $(E^{\alpha}, Q^{\alpha})$
from Lemma \ref{lemma_entropy} 
 the integral identity 
\begin{align}\label{entrop_identity_dispersion} \hspace*{-0.3cm}
    \int_{0}^{T}\int_{I} \left(E^{\alpha}(\mathbf{{U}})\varphi_t+Q^{\alpha}(\mathbf{{U}})\varphi_x+\nabla_{\mathbf{{U}}}E^{\alpha}(\mathbf{{U}})\cdot \mathbf{\bar{b}({U})\varphi}\right) \, dx\, dt+\int_{I} E^{\alpha}(\mathbf{{U}}_0)\varphi(\cdot,0)\, dx= 0
\end{align}
 for all $\varphi \in C_{0}^{\lip}( I \times [0,T))$, $\varphi \ge 0$.
In \eqref{entrop_identity_dispersion} we used  
$\mathbf{\bar{b}({{\mathbf {{U}}}})}=\big(0, -\alpha{w}- |\gamma| {u}_{xxt}, {\beta}^{-1}{\psi}+{u}_{tt}, 0\big)^\top$.\\
Furthermore, since $\mathbf{U}^{\alpha}$ is a weak entropy solution of  \eqref{hyperbolic_system}, \eqref{IC_BC_Hyp}, we have
\begin{equation}\label{rel_entropy_identity_hyp_system}
\int_{0}^{T}\int_{I} \big(E^{\alpha}(\mathbf{U}\rel)\varphi_t+Q^{\alpha}(\mathbf{U}\rel)\varphi_x+\nabla_{\mathbf{U}}E^{\alpha}(\mathbf{U}\rel)\cdot \mathbf{b(U\rel)}\varphi\big) \, dx\, dt
 +\int_{I} E^{\alpha}(\mathbf{U}\rel_0)\varphi(\cdot,0)\, dx\geq 0. 
\end{equation}
Now, we recall that the  entropy $E^{\alpha}$ from \eqref{convex-entropy} is almost quadratic and define the associated  relative entropy $\mathcal{E}^{\alpha}$ for the approximate solution ${\mathbf U}\rel$   and the function $\mathbf{{U}}$
 according to 
\begin{equation}\label{rel_entropy_hyp_system} 
\begin{aligned}
\hspace*{-0.3cm}
    \mathcal{E}^{\alpha}(\mathbf{U}\rel| \mathbf{ U})&=E^{\alpha}(\mathbf{U}\rel)-E^{\alpha}(\mathbf{{U}})-\nabla_{\mathbf{U}}E^{\alpha}(\mathbf{{U}})(\mathbf{U}\rel-\mathbf{{U}})\\
    &=\sgn(\gamma)\big(F(u\rel)-F({u})-f({u})(u\rel-{u})\big)+\dfrac{|\gamma|}{2}(p\rel-{p})^2+\dfrac{ \beta}{2}  (w\rel-{w})^2+\dfrac{1}{2\alpha}(\psi\rel-{\psi})^2.
    \end{aligned}
\end{equation}
Recall that  $\sgn(\gamma)F$ is a uniformly convex function by Assumption \ref{ass_fmonotone}. Thus, there exists an  $\alpha$-independent  constant  $C_1>0$ such that
\begin{align}\label{relative_entropy_relation_main}
    C_1\left((u^\alpha-{u})^2+\dfrac{1}{\alpha}(\psi^\alpha-{\psi})^2+\beta(w^\alpha-{w})^2+(p^\alpha-{p})^2\right)\leq \mathcal{E}^{\alpha}(\mathbf{U}\rel|\mathbf{{U}}).
\end{align}
Now, we subtract \eqref{entrop_identity_dispersion} from \eqref{rel_entropy_identity_hyp_system} to obtain 
\begin{equation}\label{difference_idenity}
\begin{aligned}
    &\hspace*{-2cm}\int_{0}^{T}\int_{I} \big((E^{\alpha}(\mathbf{U}\rel)-E^{\alpha}(\mathbf{{U}}))\varphi_t+(Q^{\alpha}(\mathbf{U}\rel)-Q^{\alpha}(\mathbf{{U}}))\varphi_x\big) \, dx\, dt\\
+&\int_{0}^{T}\int_{I}\left(\nabla_{\mathbf{U}}E^{\alpha}(\mathbf{U}\rel)\cdot \mathbf{b(U\rel)}-\nabla_{\mathbf{{U}}}E^{\alpha}(\mathbf{{U}})\cdot \mathbf{{b}({U})}\right)\varphi \, dx\, dt\\
+&\int_{I} \left(E^{\alpha}(\mathbf{U}_0)-E^{\alpha}(\mathbf{{U}}_0)\right)\varphi(x, 0)\, dx\geq 0. 
\end{aligned}
\end{equation}
Moreover, since $\mathbf{{U}}$ is a classical solution of \eqref{reformulated_dispersion}, it must also be a solution of the system \eqref{reformulated_dispersion} in the distributional sense. If  we  choose  for  $\varphi \in C_0^{\lip}(I \times [0,T)  )$ the test function ${\bm \varphi}=\varphi  \nabla_{\mathbf{U}} E^{\alpha}(\mathbf{{U}})\in  C_0^{\lip}( I \times [0,T); \mathbb{R}^4)$ like in Definition \ref{weak_soln}, we obtain
 from the fact that $\mathbf{U}\rel$ is a weak solution, the identity
\begin{equation}\label{weak_relative}
\begin{aligned}
    &\hspace*{-0.5cm}\int_{0}^{T}\int_{I} (\mathbf{U}\rel-\mathbf{{U}})\cdot (\varphi \nabla_{\mathbf{U}} E^{\alpha}(\mathbf{{U}})_t+(\mathbf{f(U\rel)}-\mathbf{f({U})})\cdot(\varphi\nabla_{\mathbf{U}} E^{\alpha}(\mathbf{{U}}))_x\, dx\, dt\\
    +&\int_{0}^{T}\int_{I} (\mathbf{b(U\rel)}-\mathbf{{b}({U})})\cdot (\varphi \nabla_{\mathbf{U}} E^{\alpha}(\mathbf{{U}})) \, dx\, dt +\int_{I} (\mathbf{U}\rel_0-\mathbf{{U}}_0)\cdot (\varphi\nabla_{\mathbf{U}} E^{\alpha}(\mathbf{{U}}))( x, 0)\, dx= 0. 
\end{aligned}
\end{equation}
From the relations 
\[
\left(\nabla_{\mathbf{{U}}} E^{\alpha}(\mathbf{{U}})\right)_t=\mathbf{{U}}_t^\top \nabla^2_{\mathbf{{U}}} E^{\alpha}(\mathbf{{U}}) ,\quad \text{and}~ \left(\nabla_{\mathbf{{U}}} E^{\alpha}(\mathbf{{U}})\right)_x= \mathbf{{U}}_x^\top\nabla^2_{\mathbf{{U}}} E^{\alpha}(\mathbf{{U}}) ,
\]
we get after  subtracting \eqref{weak_relative} from \eqref{difference_idenity}  the identity 
\begin{equation}\label{difference_idenity_weak_form}
\begin{aligned}
    &\hspace*{-1cm}\mathcal{I}=\int_{0}^{T}\int_{I} \left(\mathcal{E}^{\alpha}(\mathbf{U}\rel|\mathbf{{U}}))\varphi_t+(\mathcal{Q}^{\alpha}(\mathbf{U}\rel|\mathbf{{U}}))\varphi_x\right) \, dx\, dt\\
+&\underbrace{\int_{0}^{T}\int_{I} \left(\mathbf{U}\rel-\mathbf{{U}})^\top \mathbf{{U}}_t^\top\nabla^2_{\mathbf{{U}}} E^{\alpha}(\mathbf{U}) +(\mathbf{f(U\rel)}-\mathbf{f({U}}))^\top\mathbf{{U}}_x^\top\nabla^2_\mathbf{U} E^{\alpha}(\mathbf{{U}}) )\right)\varphi\, dx\, dt}_{=:\mathcal{I}_1}\\
+&\underbrace{\int_{0}^{T}\int_{I}\left(\nabla_{\mathbf{U}}E^{\alpha}(\mathbf{U}\rel)-\nabla_{\mathbf{U}}E^{\alpha}(\mathbf{{U}})\right)\cdot \mathbf{b(U\rel)} \varphi\, dx\, dt}_{=:\mathcal{I}_2}+\int_{I} \mathcal{E}^{\alpha}(\mathbf{U}_0|\mathbf{{U}}_0)\varphi(x,0)\, dx\geq 0. 
\end{aligned}
\end{equation}
Here we used the relative entropy flux 
\[
\mathcal{Q}^{\alpha}(\mathbf{U}\rel| {\mathbf U})= Q^{\alpha}(\mathbf{U}^\alpha)-Q^{\alpha}(\mathbf{{U}})-\nabla_{\mathbf{U}}E^{\alpha}(\mathbf{{U}})\cdot (\mathbf{f}(\mathbf{U}^\alpha)-\mathbf{f({U})}), \]
which satisfies for an $\alpha$-independent constant $C_2\ge0$ the inequality
$|\mathcal{Q}^{\alpha}(\mathbf{U}\rel| {\mathbf U})|\leq C_2 |\mathcal{E}^{\alpha}(\mathbf{U}\rel| {\mathbf U})|.
$\\
Now we focus on the integrals $\mathcal{I}_1$ and $\mathcal{I}_2$ in \eqref{difference_idenity_weak_form}. First, consider the integral $\mathcal{I}_1$, which, in view of \eqref{reformulated_dispersion} writes as
\begin{equation}\label{I_1}
\begin{aligned}
\mathcal{I}_1&=\int_{0}^{T}\int_{I} \left(\left(\mathbf{{U}}_t\cdot(\mathbf{U}\rel-\mathbf{{U}}) +\mathbf{{U}}_x\cdot(\mathbf{f(U\rel)}-\mathbf{f({U}})) \right)^\top\nabla^2_{\mathbf{U}} E^{\alpha}(\mathbf{{U}})\right)\varphi\, dx\, dt\\
&=\int_{0}^{T}\int_{I} \left(\left(\mathbf{{U}}_x\cdot \left(\mathbf{f(U\rel)}-\mathbf{f({U}})-\mathbf{Df({U})}(\mathbf{U\rel}-\mathbf{{U}})\right) \right)^\top\nabla^2_{\mathbf{U}} E^{\alpha}(\mathbf{{U}})\right)\varphi\, dx\, dt\\
&\hspace{3 cm}+ \int_{0}^{T}\int_{I} \left(\mathbf{{b}({U})}\cdot(\mathbf{U\rel}-\mathbf{{U}})\right)^\top\nabla^2_{\mathbf{U}} E^{\alpha}(\mathbf{{U}})\varphi\, dx\, dt\\
&=\int_{0}^{T}\int_{I} \left(\nabla^2_{\mathbf{U}} E^{\alpha}(\mathbf{{U}})\left(\mathbf{f(U\rel)}-\mathbf{f({U}})-\mathbf{Df({U})}(\mathbf{U\rel}-\mathbf{{U}})\right) \cdot \mathbf{{U}}_x\right)\varphi\, dx\, dt\\
&\hspace{3 cm}+ \underbrace{\int_{0}^{T}\int_{I} \nabla^2_{\mathbf{U}} E^{\alpha}(\mathbf{{U}})(\mathbf{U\rel}-\mathbf{{U}}))\cdot \mathbf{{b}({U})}\varphi\, dx\, dt}_{=:\mathcal{I}_3},
\end{aligned}
\end{equation}
where we have used the symmetry of the Hessian matrix. \\
Now in view of \eqref{reformulated_dispersion} and the definition of $E^{\alpha}$, it is easy to obtain that
\begin{equation}\label{taylor_reminder_F}
\begin{aligned}
  \nabla^2_{\mathbf{U}} E^{\alpha}({\mathbf{U}})
  \big(\mathbf{f}(\mathbf{U}^\alpha)-\mathbf{f}({\mathbf{U}})
  &- \mathbf{Df}({\mathbf{U}})(\mathbf{U}^\alpha-{\mathbf{U}})\big)
  \cdot {\mathbf{U}}_x\\
  &= \sgn(\gamma)\big(f(u^\alpha)-f({u})-f'({u})(u^\alpha-{u})\big)
     \big(f'({u}){u}_x+{\psi}_x\big).
\end{aligned}
\end{equation}
Moreover, $\mathcal{I}_2$ from \eqref{difference_idenity_weak_form} and $\mathcal{I}_3$ from \eqref{I_1} together become
\begin{equation}\label{I2+I3}
    \begin{aligned}
\mathcal{I}_2+\mathcal{I}_3&=\int_{0}^{T}\int_{I} \left(\nabla^2_{\mathbf{U}}E^{\alpha}(\mathbf{{U}})(\mathbf{U\rel}-\mathbf{{U}}))\cdot \mathbf{{b}({U})}+\left(\nabla_{\mathbf{U}}E^{\alpha}(\mathbf{U}\rel)-\nabla_{\mathbf{U}}E^{\alpha}(\mathbf{{U}})\right)\cdot \mathbf{b(U\rel)}\right)\varphi\, dx\, dt\\
    &=\int_{0}^{T}\int_{I} \left(\dfrac{|\gamma| }{\alpha} (\psi\rel-{\psi}) {u}_{xxt}-\beta (w\rel-{w}){u}_{tt}\right)\varphi\, dx\, dt.
    \end{aligned}
\end{equation}
Thus, using \eqref{I_1}, \eqref{taylor_reminder_F} and \eqref{I2+I3} in \eqref{difference_idenity_weak_form}, the integral $\mathcal{I}$ in \eqref{difference_idenity_weak_form} reduces to  
\begin{equation}\label{difference_idenity_weak_form_simplified}
\begin{aligned}\mathcal{I}&=    \int_{0}^{T}\int_{I} \left(\mathcal{E}^{\alpha}(\mathbf{U}\rel|\mathbf{{U}})\varphi_t+\mathcal{Q}^{\alpha}(\mathbf{U}\rel|\mathbf{{U}})\varphi_x\right) \, dx\, dt\\
&\qquad +\int_{0}^{T}\int_{I} \sgn(\gamma)\big(f(u^\alpha)-f({u})-f'({u})(u^\alpha-{u})\big)
     \big(f'({u}){u}_x+{\psi}_x\big)\varphi\, dx\, dt\\
&\qquad +\int_{0}^{T}\int_{I}\left(\dfrac{|\gamma| }{\alpha} (\psi\rel-{\psi}) {u}_{xxt}-\beta (w\rel-{w}){u}_{tt}\right)\varphi\, dx\, dt+\int_{I} \mathcal{E}^{\alpha}(\mathbf{U\rel_0}|\mathbf{{U}}_0)\varphi(\cdot,0)\, dx\geq 0. 
\end{aligned}
\end{equation}
Since we are using periodic boundary conditions for the  bounded domain 
$\mathcal{I}$, we  get for  the test function $\varphi(x,t)=\vartheta(t)\in C_{0}^{\lip}([0, T))$ the derivatives  $\vartheta_t=\vartheta'(t)$ and $\vartheta_x=0$. In such a situation, the integral identity \eqref{difference_idenity_weak_form_simplified} simplifies to
the following form
\begin{equation}\label{difference_idenity_periodic}
\begin{aligned}
    &\hspace*{-0.2cm}\int_{0}^{T}\int_{I} \mathcal{E}^{\alpha}(\mathbf{U}\rel| {\mathbf U})\vartheta'\, dx\, dt+\int_{I} \left(\mathcal{E}^{\alpha}(\mathbf{U}_0\rel| {\mathbf U}_0)\right)\vartheta(\cdot, 0)\, dx\\
&\geq -\int_{0}^{T}\int_{I}\sgn(\gamma)\big(f(u^\alpha)-f({u})-f'({u})(u^\alpha-{u})\big)
     \big(f'({u}){u}_x+{\psi}_x\big)\vartheta\, dx\, dt\\
& \quad -\int_{0}^{T}\int_{I}\left(\dfrac{|\gamma| }{\alpha} (\psi\rel-{\psi}) {u}_{xxt}-\beta (w\rel-{w}){u}_{tt}\right)\vartheta \, dx\, dt.
\end{aligned}
\end{equation}
Following the proof of Theorem 5.2.1 in \cite{dafermos2005hyperbolic}, we fix the test function $\vartheta$ for some time $\sigma\in (0, T)$. The test function $\vartheta$ for some $\epsilon>0$ is given by
\begin{align}
  \vartheta(t)
  &= 
  \begin{cases}
    1, & 0 \le t < \sigma,\\[0.3em]
    \dfrac{\sigma - t}{\epsilon} + 1, & \sigma \le t \le \sigma + \epsilon,\\[0.3em]
    0, & t \ge \sigma + \epsilon,
  \end{cases} 
\end{align}
such that 
\begin{align}
  \vartheta'(t)
  &= 
  \begin{cases}
    0, & 0 \le t < \sigma,\\[0.3em]
    -\dfrac{1}{\epsilon}, & \sigma < t < \sigma + \epsilon,\\[0.3em]
    0, & t \ge \sigma + \epsilon.
  \end{cases}
\end{align}
Clearly, \eqref{difference_idenity_periodic} then reduces to 
\begin{equation}\label{difference_idenity_periodic_updated}
\begin{aligned}
    \hspace*{-0.2cm}\dfrac{1}{\epsilon}\int_{\sigma}^{\sigma+\epsilon}\int_{I} \mathcal{E}^{\alpha}(\mathbf{U}\rel| {\mathbf U})\, dx\, dt\leq&\int_{I} \mathcal{E}^{\alpha}(\mathbf{U}_0\rel| {\mathbf U}_0)\, dx+\underbrace{\int_{0}^{\sigma}\int_{I}\mathcal{R}\vartheta(t)\, dx\, dt}_{=:\mathcal{I}_4}+\underbrace{\int_{\sigma}^{\sigma+\epsilon}\int_{I}\mathcal{R}\vartheta(t)\, dx\, dt}_{=:\mathcal{I}_5},
\end{aligned}
\end{equation}
where $\mathcal{R}=\sgn(\gamma)\big(f(u^\alpha)-f({u})-f'({u})(u^\alpha-{u})\big)\big(f'({u}){u}_x+{\psi}_x\big)+\left(\dfrac{|\gamma| }{\alpha} (\psi\rel-{\psi}) {u}_{xxt}+\beta (w\rel-{w}){u}_{tt}\right).$\\
Now for $\mathcal{I}_4$, in view of Young's inequality, we have
\begin{equation}\label{I_4}
\begin{aligned}
    \mathcal{I}_4\leq &\, {\lVert \big(f'({u}){u}_x+{\psi}_x\big)\rVert}_{L^{\infty}( I \times (0, \sigma))} \int_{0}^{\sigma}\int_{I}\left|\big(f(u^\alpha)-f({u})-f'({u})(u^\alpha-{u})\big)\right|\, dx\,dt\\
&+\int_{0}^{\sigma}\int_{I}\left(\dfrac{1}{2\alpha}(\psi^\alpha-{\psi})^2+\dfrac{\beta}{2} (w^\alpha-{w})^2\right)\,dx\,dt+\int_{0}^{\sigma}\int_{I}\left(\dfrac{\gamma^2}{2\alpha}{u}_{xxt}^2+\dfrac{\beta}{2} {u}_{tt}^2 \right)\,dx\,dt.
\end{aligned}
\end{equation}
In view of Assumption \ref{ass_fmonotone}, we have ${\lVert f''\rVert}_{\infty}\leq L$ and thus
\begin{align}\label{eq: taylor_f}
\big(f(u^\alpha)-f({u})-f'({u})(u^\alpha-{u})\big)\leq \dfrac{L}{2}(u\rel-{u})^2.
\end{align}
Moreover, the uniform convexity of  $\sgn(\gamma)F$ with $\sgn(\gamma)f'(u)\geq \delta>0$ ensures
\begin{align}\label{rel_ent_2}
\dfrac{L}{2}(u\rel-{u})^2\leq \dfrac{L\sgn(\gamma)}{2\delta} \big(F(u\rel)-F({u})-f({u})(u\rel-{u})\big).
\end{align}
Therefore, using \eqref{eq: taylor_f} and \eqref{rel_ent_2} in \eqref{I_4} and in view of the relative entropy from \eqref{rel_entropy_hyp_system}, we have
\begin{align*}
    \mathcal{I}_4&\leq C_3\left(L, \delta\right)  {\lVert {u}_{xxx}\rVert}_{L^{\infty}( I \times (0, \sigma) )}\int_{0}^{\sigma}\int_{I}\mathcal{E}^{\alpha}(\mathbf{U\rel}|\mathbf{{U}})\, dx\,dt+\int_{0}^{\sigma}\int_{I}\left(\dfrac{\gamma^2}{2\alpha}{u}_{xxt}^2+\dfrac{\beta}{2} {u}_{tt}^2 \right)\,dx\,dt,
\end{align*}
where the constant $C_3=C\left(L, \delta\right) $ does not depend on the parameters $\alpha$ and $\beta$.\\
Furthermore, for $\mathcal{I}_5$ in \eqref{difference_idenity_periodic_updated}, one can apply the Cauchy-Schwarz inequality to obtain the inequality
\begin{align*}
    \mathcal{I}_5\leq \sqrt{\epsilon |I| }{\lVert\mathcal{R}\rVert}_{{L}^{2}( I \times (\sigma, \sigma+\epsilon))}\rightarrow 0, \quad \text{as}~\epsilon\rightarrow 0.
\end{align*}
Thus, by letting $\epsilon \rightarrow 0$ in \eqref{difference_idenity_periodic_updated}, we obtain using the Sobolev inequality
\begin{align*}
   \hspace{-0.2 cm} \int_{I} \mathcal{E}^{\alpha}(\mathbf{U}\rel
    (x, \sigma)| {\mathbf U}(x, \sigma))\, dx&\leq\int_{I} \mathcal{E}^{\alpha}(\mathbf{U}_0\rel| {\mathbf U}_0)\, dx+\int_{0}^{\sigma}\int_{I}\left(\dfrac{\gamma^2}{2\alpha}{u}_{xxt}^2+\dfrac{\beta}{2} {u}_{tt}^2 \right)\,dx\,dt\\
   & \qquad + C_3\left(L, \delta\right) {\lVert {u}\rVert}_{L^{\infty}((0, \sigma)\times H^4(I))} \int_{0}^{\sigma}\int_{I}\mathcal{E}^{\alpha}(\mathbf{U\rel}|\mathbf{{U})}\, dx\,dt.
\end{align*}
Therefore, arguing as in Theorem 5.2.1 in \cite{dafermos2005hyperbolic} and using Gronwall's inequality, one can conclude that
\begin{equation}
\begin{aligned}
 &\hspace{-0.4 cm}\sup_{t\in (0, T)}  \int_{I}\mathcal{E}^{\alpha}(\mathbf{U}^\alpha|\mathbf{{U}})\, dx\\
 &\leq \exp\left(C_3T{\lVert{u}\rVert}_{L^\infty(0, T;H^4(I))}\right)\left(\int_{I} \mathcal{E}^{\alpha}(\mathbf{U}_0^\alpha|\mathbf{{U}}_0)\, dx+\dfrac{\gamma^2}{2\alpha} {\lVert {u}_{xxt}\rVert}_{{{L}^{2}}(I_T)}^2+\dfrac{\beta}{2} {\lVert{u}_{tt}\rVert}_{{{L}^{2}}(I_T)}^2\right).
\end{aligned}
\end{equation}
From our choice of prepared initial data
$\mathbf{U}\rel_0=\mathbf{{U}}_0$  (see \eqref{IC_BC_Hyp}) we deduce
\begin{align*}
\int_{I}&\mathcal{E}^{\alpha}(\mathbf{U}_0^\alpha|\mathbf{{U}}_0)\, dx\\
&=\sgn(\gamma)\left(F(u_0\rel)-F({u}_0)-f({u}_0)(u_0\rel-{u}_0)\right)+\dfrac{|\gamma|}{2}(p\rel_0-{p}_0)^2+\dfrac{ \beta}{2}  (w\rel_0-{w}_0)^2+\dfrac{1}{2\alpha}(\psi\rel_0-{\psi}_0)^2=0.
\end{align*}
Then if $\alpha\rightarrow \infty$, we get with $\beta = \alpha^{-1}$ the estimate $\int_{I}\mathcal{E}^{\alpha}(\mathbf{U}^\alpha|\mathbf{{U}})\, dx \le C_4  \alpha^{-1}$.  
The convergence rate in \eqref{convergence_rates} is a consequence of \eqref{relative_entropy_relation_main}.
\end{proof}
\begin{remark}
To prove the relative entropy estimates, the condition $f''\in L^\infty(I)$  can be relaxed. What is  really needed is that there exists a constant $K>0$ such that
\[|f(u^\alpha)-f({u})-f'({u})(u^\alpha-{u})|\leq K |u^\alpha-{u}|^2\quad {\rm{uniformly~in}}~I_T.\]
\end{remark}
\subsection{Diffusive-dispersive asymptotics for the hyperbolic-parabolic system \eqref{hyperbolic_parabolic_system}}
In Theorem \ref{global_wellposedness}, we already proved the global well-posedness of the classical solutions for the system \eqref{hyperbolic_parabolic_system}. In this section, we proceed to analyze the convergence of solutions for the initial value problem 
\eqref{hyperbolic_parabolic_system}, \eqref{hyperbolic_parabolic_system_ini} with $\eps >0$ to the solutions of \eqref{eq:diffusivedispersive}, \eqref{eq:diffusivedispersive_ini}. We use again the scaling  $\beta =\alpha^{-1}$, and we employ again 
relative entropy estimates for our second main result in Theorem \ref{theo:dd}. 
Precisely, we prove the following.
\begin{theorem}[Main result II]\label{theo:dd}
  Let   ${u}_0\in H^6(I)$, and suppose that Assumption \ref{ass_fmonotone} holds. We assume  that 
 ${u}\in {L}^{2}(0, T;H^3(I))$ with ${u}_{tt}\in {L}^{2}(I_T)$ and ${\lVert{u}\rVert}_{L^\infty(0, T;H^4(I))}<\infty$
 is a classical solution of \eqref{eq:diffusivedispersive}, \eqref{eq:diffusivedispersive_ini}.
   Consider for $\beta= \alpha^{-1}$, a 
 sequence   ${\{\mathbf{U}^\alpha=(u^\alpha, \psi^\alpha, w^\alpha, p^\alpha)^\top\}}_{\alpha>0}$  
of smooth solutions 
 of \eqref{hyperbolic_parabolic_system}, \eqref{hyperbolic_parabolic_system_ini}.\\
 Then there is a  constant $C= C(L,\delta, {u})\ge 0$ that is independent of $\alpha$ such that
    \begin{align}\label{convergence_ratesdd}
        {\lVert u\rel-{u}\rVert}_{L^{\infty}\big(0, T; {L}^{2}(I)\big)}+ {\lVert p\rel-{u}_x \rVert}_{L^{\infty}\big(0, T; {L}^{2}(I)\big)}\le C\sqrt{\dfrac{1}{\alpha}}
    \end{align}
    holds.
\end{theorem}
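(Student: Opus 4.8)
The plan is to reproduce the relative-entropy argument of Theorem \ref{relative_entropy_theorem_1}, now carrying along the viscous terms encoded by the diffusion matrix $\mathbf{D}$ from \eqref{eq:dissipative_system}. Since by Theorem \ref{global_wellposedness} the approximations $\mathbf{U}^{\alpha}$ are globally smooth, I would work with the differential (rather than the weak) form throughout, which makes the integration by parts of the parabolic terms transparent. First I would reformulate \eqref{eq:diffusivedispersive} in the approximate variables as in \eqref{reformulated_dispersion}, now retaining the viscous terms: writing the limit profile $\mathbf{U}=(u,\psi,w,p)^\top$ with $\psi=-|\gamma|u_{xx}$, $p=u_x$ and $w=u_t-\varepsilon u_{xx}$ (which reduces to $w=u_t$ when $\varepsilon=0$, ensuring that the fourth equation $p_t-w_x=\varepsilon p_{xx}$ holds identically), and checking that $\mathbf{U}$ then solves \eqref{hyperbolic_parabolic_system} up to source residuals that are of order $\alpha^{-1}$ once $\beta=\alpha^{-1}$ is imposed; these residuals involve $u_{tt}$, $u_{xxt}$ and $u_{xxxx}$. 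I then introduce the relative entropy $\mathcal{E}^{\alpha}(\mathbf{U}^{\alpha}\mid\mathbf{U})$ exactly as in \eqref{rel_entropy_hyp_system} and record the coercivity bound \eqref{relative_entropy_relation_main}, valid by the uniform convexity of $\sgn(\gamma)F$ from Assumption \ref{ass_fmonotone}.

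Next I would differentiate $\int_I\mathcal{E}^{\alpha}(\mathbf{U}^{\alpha}\mid\mathbf{U})\,dx$ in time, equivalently testing the difference of the two systems against $\nabla_{\mathbf{U}}E^{\alpha}(\mathbf{U}^{\alpha})-\nabla_{\mathbf{U}}E^{\alpha}(\mathbf{U})$ and integrating over $I$. The contributions of the flux and of the skew-symmetric source $\mathbf{S}\mathbf{U}$ reproduce the structure of Theorem \ref{relative_entropy_theorem_1}: a relative-entropy-flux term that integrates to zero under periodic boundary conditions, a quadratic flux remainder bounded by $\tfrac{L}{2}(u^{\alpha}-u)^2$ and hence by $\mathcal{E}^{\alpha}$, and the $O(\alpha^{-1})$ source residuals. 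The genuinely new ingredient is the diffusion: integration by parts of the viscous terms produces the dissipative contribution $-\int_I(\mathbf{U}^{\alpha}-\mathbf{U})_x^\top\,\mathbf{S_2}(\mathbf{U})\,(\mathbf{U}^{\alpha}-\mathbf{U})_x\,dx\le 0$, where $\mathbf{S_2}(\mathbf{U})=\nabla^2_{\mathbf{U}}E^{\alpha}(\mathbf{U})\mathbf{D}$ is the positive semi-definite matrix of \eqref{symmetric_form} (diagonal with entries $\varepsilon\sgn(\gamma)f'(u)$, $\varepsilon/\alpha$, $0$, $|\gamma|\varepsilon$), together with cross terms in which one spatial derivative falls on the smooth profile $\mathbf{U}$.

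I would then estimate the remainders. The diffusion cross terms carry factors $\partial_x\mathbf{U}$ and $\partial_x^2\mathbf{U}$ that are bounded in $L^\infty$ by the hypothesis $\|u\|_{L^\infty(0,T;H^4(I))}<\infty$ (using $H^4(I)\hookrightarrow C^3$ in one dimension); by Young's inequality I split each so that a small multiple is absorbed into the good dissipation and the rest is bounded by $C\int_I\mathcal{E}^{\alpha}\,dx$. The $O(\alpha^{-1})$ source residuals are controlled using $u_{tt},u_{xxt}\in L^2(I_T)$, exactly as the flux-remainder terms are handled in the proof of Theorem \ref{relative_entropy_theorem_1}. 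Since the prepared data \eqref{hyperbolic_parabolic_system_ini} give $\mathbf{U}_0^{\alpha}=\mathbf{U}_0$ and hence $\mathcal{E}^{\alpha}(\mathbf{U}_0^{\alpha}\mid\mathbf{U}_0)=0$, Gronwall's inequality yields $\sup_{t\in(0,T)}\int_I\mathcal{E}^{\alpha}(\mathbf{U}^{\alpha}\mid\mathbf{U})\,dx\le C\alpha^{-1}$, and the coercivity \eqref{relative_entropy_relation_main} converts this into the stated rate \eqref{convergence_ratesdd}.

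The main obstacle I anticipate is the treatment of the viscous cross terms. Because $\mathbf{D}$ is degenerate, vanishing on the $w$-component, the dissipation $\int_I(\mathbf{U}^{\alpha}-\mathbf{U})_x^\top\mathbf{S_2}(\mathbf{U})(\mathbf{U}^{\alpha}-\mathbf{U})_x\,dx$ controls only the gradients of the $u$-, $\psi$- and $p$-differences, so I must verify that every cross term is compatible with this partial dissipation and never requires control of $w^{\alpha}_x-w_x$. The strict positivity $\sgn(\gamma)f'(u)\ge\delta>0$ keeps $\mathbf{S_2}(\mathbf{U})$ uniformly positive on its range and is what makes the absorption step quantitative, in line with the dissipation already found in Lemma \ref{zero_order_estimates}. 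A secondary point is bookkeeping of the $\alpha$-dependent weights (the factor $1/\alpha$ on the $\psi$-block and $\beta=\alpha^{-1}$ on the $w$-block), which must enter consistently so that all constants in the Gronwall estimate are genuinely independent of $\alpha$; this $\alpha$-uniformity is precisely what allows the $\alpha^{-1}$ rate to survive the limit.
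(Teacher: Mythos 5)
Your proposal is correct and takes essentially the same route as the paper's proof: rewrite the limit solution in the approximate variables, differentiate the relative entropy \eqref{rel_entropy_hyp_system} in time, absorb the viscous cross terms into the partial dissipation (the $\varepsilon\delta$-, $\varepsilon/\alpha$- and $\varepsilon|\gamma|$-weighted gradient terms) via Young's inequality, control the $O(\alpha^{-1})$ source residuals by the assumed regularity of $u$, and conclude with Gronwall's inequality, the coercivity \eqref{relative_entropy_relation_main}, and the vanishing of the initial relative entropy for prepared data. One remark in your favor: your choice $w=u_t-\varepsilon u_{xx}$ for the limit profile is more consistent than the paper's $w=u_t$, under which the reformulated system \eqref{eq_approxdd} does not hold exactly (its fourth equation would force $\varepsilon u_{xxx}=0$, and its second acquires an $O(1)$ discrepancy $\varepsilon u_{xx}$); with your definition all four equations hold up to exactly the $O(\alpha^{-1})$ residuals in $u_{tt}$, $u_{xxt}$, $u_{xxxx}$ that you list, and the estimate then proceeds as in the paper.
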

\begin{proof} The proof follows the lines of the proof for Theorem \ref{relative_entropy_theorem_1} but exploits the fact that $ u$ and the elements of the sequence ${\{\mathbf{U}^\alpha\}}_{\alpha>0}$  
are smooth functions.\\ 
Define $ \psi = - |\gamma|  u_{xx}$, $ w =  u_t$ and $ p =  u_x$.
Then we deduce from the fact that $ u $ solves the diffusive-dispersive equation \eqref{eq:diffusivedispersive} the validity of the system  
\begin{equation}
\begin{aligned}
{u}_t+f({u})_x+\sgn(\gamma){\psi}_x&=\varepsilon {u}_{xx},\\
    \dfrac{1}{\alpha}{\psi}_t+f({u})_x+\sgn(\gamma){\psi}_x&=-{w}+\dfrac{\varepsilon}{\alpha} {\psi}_{xx}-\dfrac{1}{\alpha} |\gamma| {u}_{xxt},\\
    \beta {w}_t-|\gamma| {p}_x&={\psi}+\beta {u}_{tt},\\
    {p}_t-{w}_x&=\varepsilon  p_{xx}.
\end{aligned}\label{eq_approxdd}
\end{equation}
Substracting \eqref{eq_approxdd}  from   \eqref{hyperbolic_parabolic_system} we obtain the error system 
\begin{equation}
\begin{aligned}
     (u\rel-{u})_t+(f(u\rel)-f({u}))_x+\sgn(\gamma) (\psi\rel-{\psi})_x&=\varepsilon (u\rel-{u})_{xx},\\
    \dfrac{1}{\alpha}(\psi\rel-{\psi})_t+(f(u\rel)-f({u}))_x+\sgn(\gamma)(\psi\rel-{\psi})_x&=-(w\rel-{w})+\dfrac{\varepsilon}{\alpha}(\psi\rel-{\psi})_{xx}+\dfrac{1}{\alpha} |\gamma| {u}_{xxt},\\
    \beta (w\rel-{w})_t-|\gamma| (p\rel-{p})_x&=(\psi\rel-{\psi})-\beta {u}_{tt},\\
    (p\rel-{p})_t-(w-{w})_x&=\varepsilon(p-{p})_{xx}.
\end{aligned}
\end{equation}
We recall the definition of the relative entropy from \eqref{rel_entropy_hyp_system}. 
A straightforward computation using the lower bound $\delta$ of $\sgn(\gamma)f'$ and the upper bound $L$ of $f''$ from Assumption \ref{ass_fmonotone} yields then
\begin{align*}
    \dfrac{d}{dt}\int_{I} \mathcal{E}^{\alpha}(\mathbf{U}\rel|\mathbf{{U}})\, dx+\varepsilon&\int_{I}\left(\delta (u\rel_x-{u}_x)^2+\dfrac{1}{\alpha}(\psi\rel_x-{\psi}_x)^2+|\gamma|(p\rel_x-{p}_x)^2\right)\, dx\\
    &\leq C_5(\delta,L){\lVert{u}_{t}\rVert}_{\infty}\int_{I}\mathcal{E}^{\alpha}(\mathbf{U}\rel|\mathbf{{U}})\, dx+\dfrac{\gamma^2}{\alpha} {\lVert {u}_{xxt}\rVert}_{{L}^{2}}^2+\beta {\lVert{u}_{tt}\rVert}_{{L}^{2}}^2.
\end{align*}
The constant $C_5$ is independent of  $\alpha$. Then, using Sobolev embedding and Gronwall's inequality, we have
\begin{equation}
\begin{array}{rcl}
 \sup_{t\in (0, T)}  \int_{I}\mathcal{E}^{\alpha}(\mathbf{U}^\alpha|\mathbf{{U}})\, dx
 &\leq &\exp\left(C_5(\delta,L)\left({\lVert{u}\rVert}_{L^\infty(0, T;H^4(I)}\right)T\right)\\[2ex]
 && \qquad \times
 \left(\int_{I} \mathcal{E}^{\alpha}(\mathbf{U}_0^\alpha|\mathbf{{U}}_0)\, dx+\dfrac{\gamma^2}{\alpha} {\lVert {u}_{xxt}\rVert}_{{L}^{2}}^2+\beta {\lVert{u}_{tt}\rVert}_{{L}^{2}}^2\right)
\end{array}
\end{equation}
The estimate \eqref{convergence_ratesdd}  is then a consequence 
of the lower bound \eqref{relative_entropy_relation_main} for
the relative entropy $\mathcal{E}^{\alpha}(\mathbf{U}^\alpha|\mathbf{{U}})$
and the fact that $ \mathcal{E}^{\alpha}(\mathbf{U}_0^\alpha|\mathbf{{U}}_0) =0$
due to the choice of the prepared initial data
\eqref{hyperbolic_parabolic_system_ini}.
\end{proof}
For numerical evidence supporting the statement of Theorem \ref{theo:dd} under the scaling choice $\beta = \alpha^{-1}$ we refer to Section \ref{num:KdV} below.

\section{Numerical results}\label{sec: Numerics}
In this section, we provide a series of numerical examples for our new
approximate systems \eqref{hyperbolic_system} and \eqref{hyperbolic_parabolic_system}. For these hyperbolic and hyperbolic-parabolic systems, reliable 
numerical schemes are available. We show that our chosen approach can 
handle strong nonlinear effects leading to oscillating or shock-wave-like 
behavior.  In particular, we provide test cases for a variety of diffusive-dispersive equations ranging from KdV, Gardner, and modified KdV-Burgers equations. We also provide test cases to capture non-classical shock waves such as dispersive shock waves (DSWs) or undercompressive shock waves with our proposed systems, which is very rare in the literature to the best of our knowledge. The results  confirm 
the convergence results obtained in Section \ref{sec: convergence}.\\
The relaxation approach we propose here applies to the rather general class of PDEs \eqref{main}. Therefore, the numerical results we present cover several cases, for both convex and non-convex flux functions, in the purely dispersive or diffusive-dispersive cases; see also Remark \ref{remark_non_convex}(iii).
\subsection{The numerical scheme}
Our discretization  approach is described using the 
abstract form
\begin{equation}
	\mathbf{U}\rel_t + {\mathbf{f}\prn{\mathbf{U}\rel}}_x = \mathbf{S}(\mathbf{U}\rel)\mathbf{U}\rel + {\mathbf D} \mathbf{U}\rel_{xx}
	\label{eq:QL_system}
\end{equation}
for both approximate systems. For $\gamma\in \mathbb{R}\setminus \{0\}$, we refer to \eqref{hyperbolic_system} for the first-order hyperbolic 
approximation of the dispersive equation \eqref{eq:dispersive} with $\eps =0$ and to \eqref{hyperbolic_parabolic_system} for the second-order hyperbolic-parabolic 
approximation of \eqref{eq:dispersive} with $\eps>0$. \\
The computational domain $I_c = [x_L,x_R]$ is subdivided into $N$ equidistant cells of length $\Delta x = (x_R-x_L)/N$. The $i^{th}$ cell is identified by its center $x_i = x_L + \prn{i+1/2} \Delta x$ and is delimited by the boundaries $x_{i\pm\hlf}$. The time $t\in[0,T]$ is discretized so that $t^n = n\, \Delta t$, where $\Delta t$ is subject to a CFL-like condition. We refer to the cell average of the numerical solution in cell $i$, at time $t^n$ by $\mathbf{U}_i^n$. \newline
Let us point out that there are by now multiple well established  numerical methods for \eqref{eq:QL_system}, see, e.g.,~\cite{GR21} for some recent retreat. We propose a semi-implicit scheme based on an explicit second-order two-step Lax-Wendroff method for the hyperbolic first-order operator of the system, combined  with an implicit treatment of the stiff source term and a standard central discretization for the second-order derivatives. The choice of a Lax-Wendroff scheme is justified by its low numerical dissipation, especially since the considered system admits potentially fast characteristic speeds. An explicit diffusive scheme may require an excessively refined mesh to conserve a good quality of the numerical solution at large times.    
An operator splitting approach for the parabolic terms is employed for the parabolic terms for the test cases where they are present.
In this setting, at each time step and in every computational cell, the new solution is computed by first evaluating the intercell values $\mathbf{U}_{i\pm\hlf}^{n+\hlf}$ via 
\begin{subequations}
\begin{align}
	\mathbf{U}_{i+\hlf}^{n+\hlf} = \hlf \prn{\mathbf{U}_{i}^n + \mathbf{U}_{i+1}^n} - \frac{\Delta t}{2 \Delta x} \prn{\mathbf{f} \prn{\mathbf{U}_{i+1}^n} - \mathbf{f} \prn{\mathbf{U}_{i}^n} } + \frac{\Delta t}{2} \, \mathbf{S}\mathbf{U}_{i+\hlf}^{n+\hlf}, \\
	\mathbf{U}_{i-\hlf}^{n+\hlf} = \hlf \prn{\mathbf{U}_{i}^n + \mathbf{U}_{i-1}^n} - \frac{\Delta t}{2 \Delta x} \prn{\mathbf{f} \prn{\mathbf{U}_{i}^n} - \mathbf{f} \prn{\mathbf{U}_{i-1}^n} } + \frac{\Delta t}{2}\, \mathbf{S}\mathbf{U}_{i-\hlf}^{n+\hlf},
\end{align} 
	\label{eq:LW_step1}
\end{subequations}
whose values can be computed explicitly as
\begin{equation}
		\mathbf{U}_{i\pm\hlf}^{n+\hlf} = \prn{\I - \frac{\dt}{2}\mathbf{S}}^{-1}\prn{\hlf \prn{\mathbf{U}_{i}^n + \mathbf{U}_{i\pm1}^n} \mp \frac{\Delta t}{2 \Delta x} \prn{\mathbf{f} \prn{\mathbf{U}_{i\pm1}^n} - \mathbf{f} \prn{\mathbf{U}_{i}^n} }}.
\end{equation} 
The solution of the hyperbolic first-order part of the system is then given by
\begin{align*}
	\mathbf{U}_{i}^{\star} = \mathbf{U}_{i}^n - \frac{\dt}{\dx} \prn{\F\prn{\mathbf{U}_{i+\hlf}^{n+\hlf}} - \F\prn{\mathbf{U}_{i-\hlf}^{n+\hlf}}} + \frac{\dt}{2} \,\mathbf{S} \prn{\mathbf{U}_{i+\hlf}^{n+\hlf} + \mathbf{U}_{i-\hlf}^{n+\hlf}}.
\end{align*} 
Finally, the solution at the new time, accounting for the second-order terms (present for $\eps >0$)  is 
\begin{equation*}
    \mathbf{U}_{i}^{n+1} = \mathbf{U}_{i}^{\star} + \frac{\dt\phantom{^2}}{\dx^2} \mathbf{D}\prn{\mathbf{U}_{i+1}^{\star} -2 \mathbf{U}_{i}^{\star} + \mathbf{U}_{i-1}^{\star}}.
\end{equation*}
The time-step is restricted by a CFL-type condition of the form 
\begin{equation*}
\dt  = \mathrm{CFL} \frac{\dx}{\lambda + \frac{2\varepsilon}{\dx}}, \quad \text{where} \quad \mathrm{CFL} \le 1 \quad \text{and} \quad \lambda = \max_{i\in \{1,\ldots,N\}}\max_{k = 1,\ldots,4} \abs{\lambda_k(\mathbf{U}_i^n)},
\end{equation*}
where $\lambda$ is the maximum signal speed and  CFL is the Courant-Friedrichs-Levy number, which we take equal to $0.9$ in all the carried out simulations.

\subsection{Numerical results}
 In an attempt to help the reader navigate this section, we recall at the beginning of each test case which  evolution equation we are approximating and for which initial conditions. We shall refer to the latter as the \textit{Original Initial Value Problem} (OIVP). The numerical computations are then carried out by using the previously proposed scheme to solve the \textit{Approximate Initial Value Problem} (AIVP) consisting of the PDE \eqref{hyperbolic_system} (or \eqref{hyperbolic_parabolic_system}) and for which the well-prepared initial conditions are supplied as follows
\begin{equation*}
u(x,0) = u_0(x), \quad \psi(x,0) = -|\gamma| u_{0,xx}, \quad w(x,0) = -f(u_0)_x + |\gamma| u_{0,xxx},\quad p(x,0) = u_{0,x}.
\end{equation*}
Note that $u_0$ is the equivalent initial condition function associated with the original  problem \eqref{main}. Concerning boundary conditions, we use either periodic boundary conditions or mixed Neumann-Dirichlet boundary conditions. Although the former is rather straightforward, the latter is designed to mimic homogeneous Neumann boundary conditions $u_x(x_L,t)=u_x(x_R,t) =0$. We shall refer to this as \textit{pseudo-Neumann boundary conditions}. Since the unknown  $c$ relaxes towards $u$ (see \eqref{eq:waveform} and  \eqref{eq:damped_wave_system}), we impose the same boundary conditions for both.  In terms of the evolved variables $u,\psi,w,p$ this translates to  
\begin{subequations}
\begin{alignat}{5}
&u_x(x_L,t)&&=u_x(x_R,t) =0, \quad \psi_x(x_L,t&&)&&=\psi_x(x_R,t) &&=0, \\
&w_x(x_L,t)&&=w_x(x_R,t) =0, \quad  p(x_L,t&&)&&=p(x_R,t) &&=0. 
\end{alignat}
\label{eq:PNBC}
\end{subequations}
\subsubsection{One-Soliton solution (KdV equation)}
\label{test:1soliton}
In this example, the  OIVP is given by 
\begin{equation}
\begin{cases}
u_t + 6uu_x  = \gamma u_{xxx}, \\
u(x,0) = u_0(x) = \frac{V}{2} \mathrm{sech}^2\prn{\sqrt{\frac{V}{4|\gamma|}} \, x}. 
    \end{cases}
    \label{eq:IVP_1sol}
\end{equation}
The exact solution to \eqref{eq:IVP_1sol} is a traveling wave, {i.e.}, $u(x,t) = u_0(x-Vt)$. The computational domain is $I_c=[-2,2]$ with $N=1000$ cells. We take $\gamma=-10^{-2}$. The values of the relaxation parameters are $\alpha =10^3$ and $\beta = 10^{-6}$. We take $V=1$ and the final time is $T=100$. Here, we use periodic boundary conditions so that the final time corresponds to $25$ periods. The periodization of the solution naturally introduces negligible errors at the boundaries since the solution is exponentially decaying but not perfectly flat. The numerical results are provided in Figure \ref{fig:1_sol}.
\begin{figure}[h!]
    \centering
    \begin{subfigure}{0.49\textwidth}
        \centering
        \includegraphics[width=\linewidth]{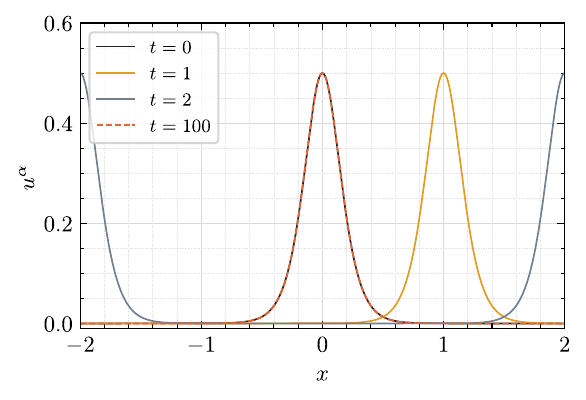}
    \end{subfigure}
    \begin{subfigure}{0.49\textwidth}
        \centering
        \includegraphics[width=\linewidth]{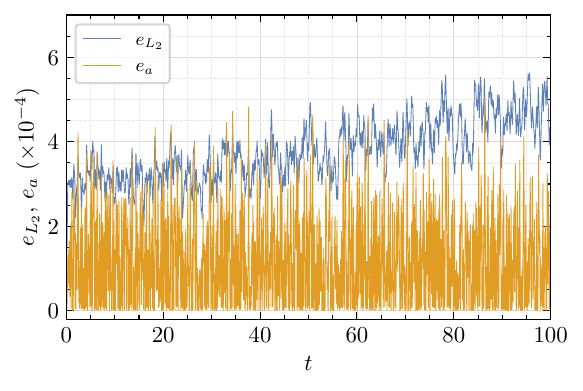}
    \end{subfigure}
\caption{Numerical results for the AIVP corresponding to \eqref{eq:IVP_1sol}. Left: The numerical solution $u(x)$ at different times up to the final time $T=100$. Right: The time evolution of the amplitude error $e_a$ as well as the absolute $L^2$ error between the numerical solution and the exact solution to the OIVP \eqref{eq:IVP_1sol}. $I_c=[-2,2]$ with $N=1000$ cells, $\gamma=-10^{-2}, \alpha =10^3$ and $\beta = 10^{-6}$. }
\label{fig:1_sol}
\end{figure}
The results show that the soliton remains stable for long times and the soliton does not break, not even after $25$ periods.  The numerical solution seemingly behaves as a traveling wave, with the same speed and profile as the solution to \eqref{eq:IVP_1sol}. The long-time behavior of the numerical solution is quantitatively illustrated in the right part of Figure \ref{fig:1_sol}, where the amplitude error $e_a(t)$ and the absolute 
$L^2$-error between the numerical solution of the AIVP and the exact solution of the OIVP $e_{L^2}(t)$, given by
\begin{equation*}
e_a(t^n) = \abs{\max_i(u_i^n)-V/2}, \quad e_{L^2}(t^n) = \sqrt{\sum_i \prn{u_i^n - u_0(x_i-Vt^n)}^2},
\end{equation*}
are plotted over time. The evolution of the error over the time  interval $[0,100]$ shows no significant increase in the order of magnitude.\\
This shows the validity of our numerical approach even for large times. We conjecture that the long-time dynamics of the solution of the approximate system \eqref{hyperbolic_system}  is also governed by a stable soliton solution under small perturbations, due to both periodization and relaxation. Thus, the approximation   
resembles qualitative properties of the original  equation \eqref{eq:dispersive}.
\subsubsection{Two-Soliton solution (KdV equation)}
\label{test:2soliton}
To address a more complex example, we consider  an  OIVP given by 
\begin{equation}
\begin{cases}
u_t + 6uu_x  = - u_{xxx} , \\
u(x,0) =
\frac{
  2 (V_1 - V_2)
  \prn{
    V_1 \cosh^{2}\theta_2(x)
    + V_2 \sinh^{2}\theta_1(x)
  }
}{
  \prn{
    (\sqrt{V_1} + \sqrt{V_2}) 
    \cosh\left(\theta_1(x) - \theta_2(x)\right)
    + (\sqrt{V_1} - \sqrt{V_2}) 
    \cosh\left(\theta_1(x) + \theta_2(x)\right)
  }^2
},
    \end{cases}
    \label{eq:IVP_2sol}
\end{equation}
where $\theta_j(x) = \sqrt{V_j}(x-x_j)/2,\ j=1,2$.
The computational domain is $I_c=[-15,15]$ with $N=1000$ cells. The values of the relaxation parameters are $\alpha =4\times10^3$ and $\beta = 10^{-6}$. Periodic boundary conditions are used and we set the final time to $T=60$. The solution consists of two solitons that move with different velocities and pass each other. Upon interacting, a phase shift occurs before each soliton resumes its course. Different aspects of the numerical solution are provided in Figure \ref{fig:2-Soliton}.\\
Again, we observe that our numerical approach provides  valid results for large times. 
We point out that the approximate system \eqref{hyperbolic_system} allows for crossings of solitons just like the dispersive target problem \eqref{eq:dispersive}.
\begin{figure}[h!]
    \centering
    \begin{subfigure}{0.49\textwidth}
        \centering
        \includegraphics[width=\linewidth]{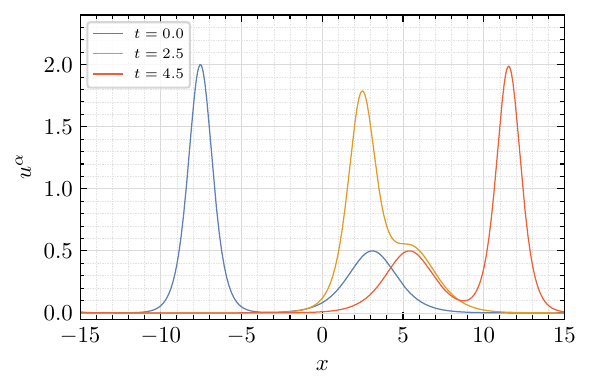}
        \caption{Numerical solution $u\rel$ for $t\in\{0,2.5,4.5\}$}
        \label{fig:2-soliton_a}
    \end{subfigure}
    \begin{subfigure}{0.49\textwidth}
        \centering
        \includegraphics[width=\linewidth]{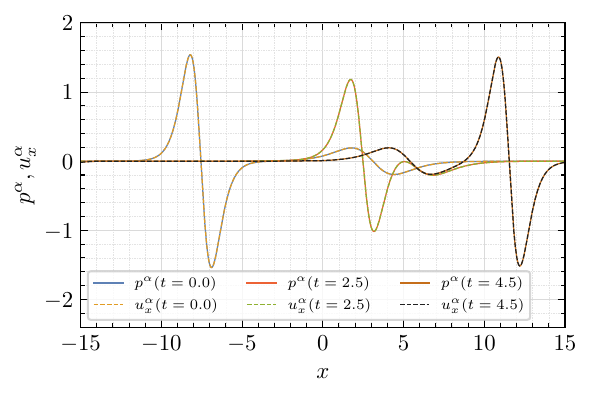}
        \caption{Comparison of $p^\alpha$ and $u_x\rel$ for $t\in\{0,2.5,4.5\}$}
        \label{fig:2-soliton_b}
    \end{subfigure}

        \begin{subfigure}{0.49\textwidth}
        \centering
        \includegraphics[width=1.1\linewidth]{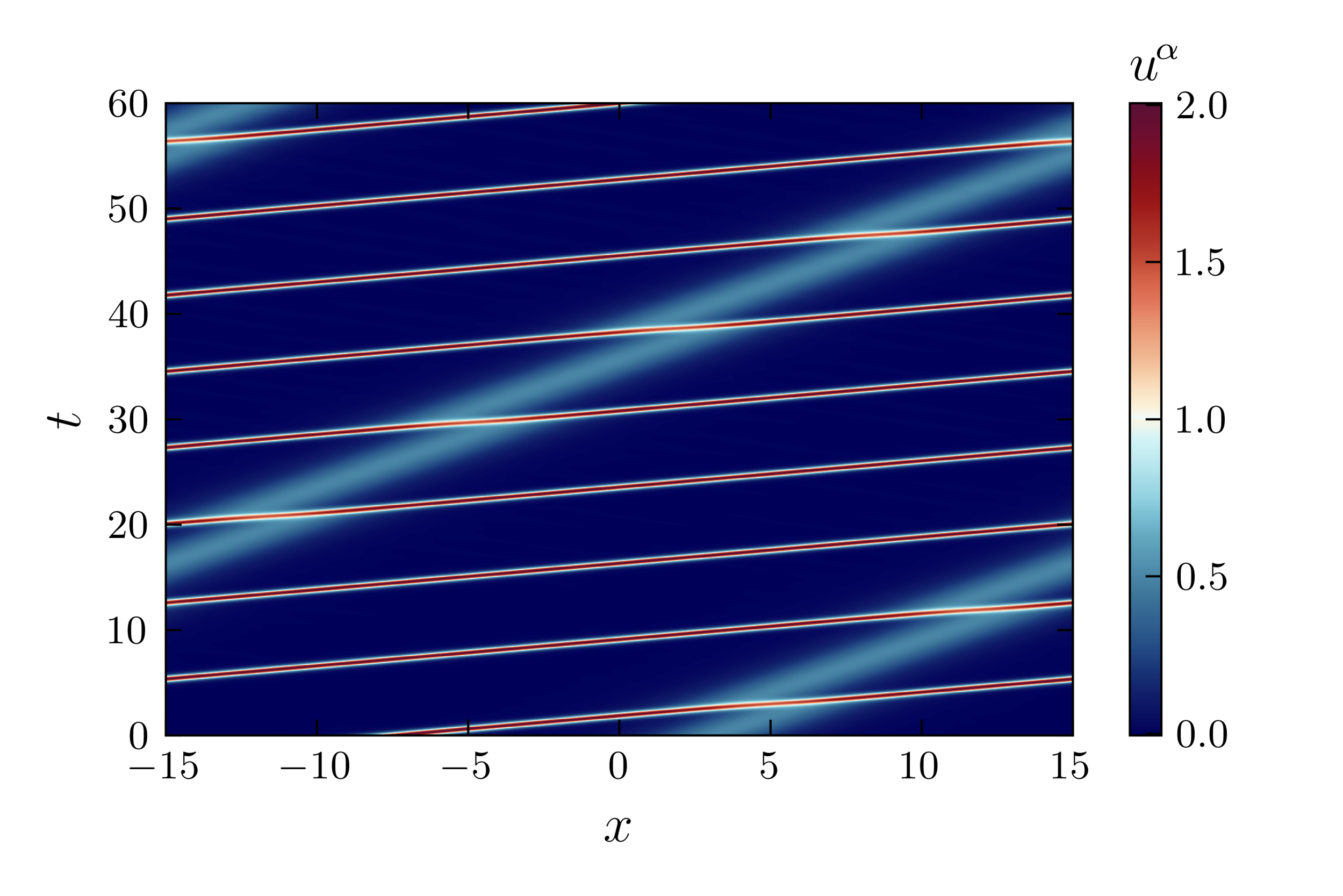}
        \caption{$x-t$ diagram of $u\rel$ for $t\in[0,60]$}
        \label{fig:2-soliton_c}
    \end{subfigure}
        \begin{subfigure}{0.49\textwidth}
        \centering
        \includegraphics[width=1.1\linewidth]{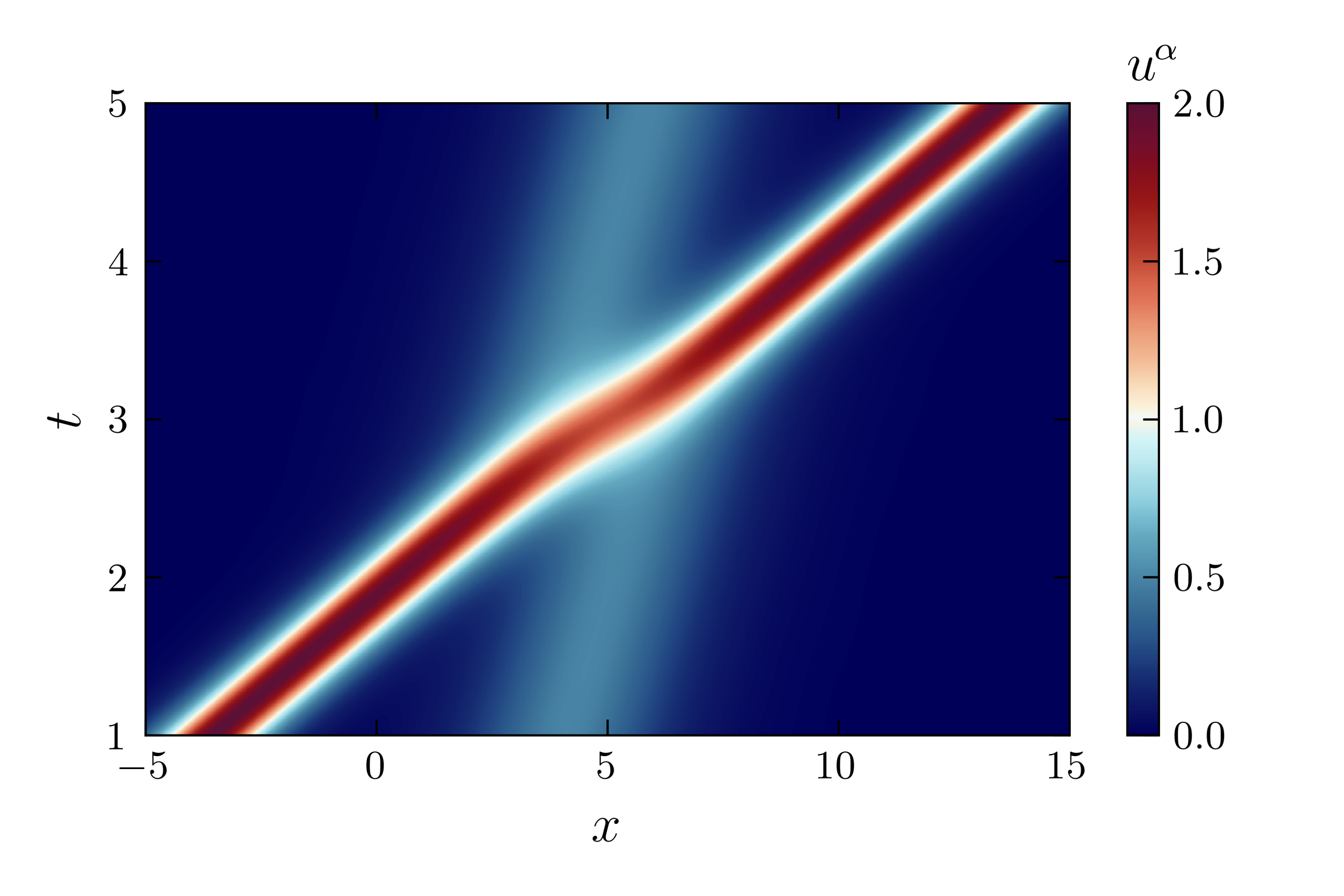}
        \caption{$x-t$ diagram of $u\rel$, zoom on the interaction}
        \label{fig:2-soliton_d}
    \end{subfigure}
\caption{Numerical results for the AIVP corresponding to \eqref{eq:IVP_2sol}, showing the main dynamics of the two-soliton solution. The parameters are $I_c=[-15,15]$ with $N=1000$ cells. $\gamma=-1$, $\alpha =4\times10^3$, $\beta = 10^{-6}$. $T=60$ and boundary conditions are periodic.}
\label{fig:2-Soliton}
\end{figure}
In Figure \ref{fig:2-Soliton_E}, we show the evolution over time of the energy error  with respect to its initial value, that is $E\rel-E_0\rel$. We deliberately do not use absolute value, only to emphasize that the total energy computed from the numerical solution, increases from its initial value and does not show monotonicity, given that the used numerical scheme is not entropy stable. Nevertheless, we report in Table  \ref{tab:conv_E2sol} the values of $\nrm{E\rel-E_0\rel}_{L^\infty(0,T)}$, corresponding to the amplitude of the periodic peaks, and which show second order convergence rate.
\begin{figure}[h!]
    \centering
    \includegraphics[width=0.5\linewidth]{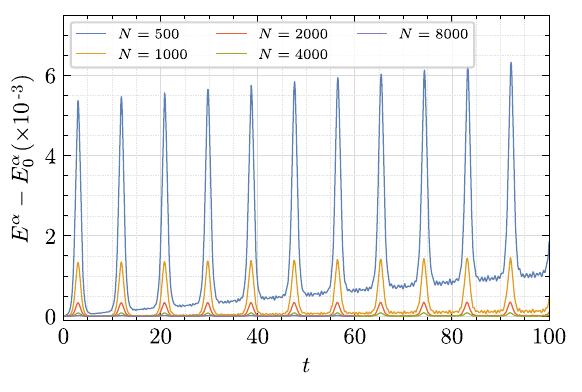}
    \caption{Time evolution of the total energy error with respect to its initial value $E\rel-E\rel_0$ for different mesh sizes. The parameters are $I_c=[-15,15]$, $\gamma=-1$, $\alpha =2\times10^3$, $\beta = 10^{-8}$, $T=100$, and boundary conditions are periodic.}
    \label{fig:2-Soliton_E}
\end{figure}
\begin{table}[h!]
\centering
\begin{tabular}{|c|c|c|}
\hline
$N$ & $\nrm{E\rel-E_0\rel}_{L^\infty(0,T)}$ & Order \\
\hline
$500$  & $6.32\times 10^{-3}$ & --   \\
$1000$ & $1.46\times 10^{-3}$ & 2.11 \\
$2000$ & $3.50\times 10^{-4}$ & 2.06 \\
$4000$ & $8.57\times 10^{-5}$ & 2.02 \\
$8000$ & $2.10\times 10^{-5}$ & 2.02 \\
\hline
\end{tabular}
\caption{Convergence of $\nrm{E\rel-E_0\rel}_{L^\infty(0,T)}$, corresponding to the curves shown in Figure \ref{fig:2-Soliton_E}.}
\label{tab:conv_E2sol}
\end{table}

\subsubsection{Negative hump DSW (KdV equation)}
\label{test:sech_DSW}
The OIVP is given by 
\begin{equation}
\begin{cases}
u_t + 6uu_x=  \gamma u_{xxx} , \\
u(x,0) = -\mathrm{sech}^2(x). 
    \end{cases}
    \label{eq:IVP_dsw_sech}
\end{equation}
This test case is adapted from \cite{grava2007numerical}, where the
initial value problem \eqref{eq:IVP_dsw_sech} is analyzed. The computational domain is $I_c=[-5,5]$ with $N=8000$ cells. We take $\gamma=-10^{-4}$. The values of the approximation parameters are $\alpha =10^3$ and $\beta = 10^{-7}$. The final time is $T=0.4$. The solution dynamics are such that the smooth initial datum evolves up to a given finite time, where it develops a wave-train of rapid oscillations in a region of space expanding over time. This is usually referred to as a Collisionless  \cite{gurevich1973nonstationary} or Dispersive Shock Wave (DSW). The main dynamics of the numerical solution are shown in Figure \ref{fig:DSW_sech}. In particular, in Figure \ref{fig:sech-a}, we show the numerical solution at different times $t\in\{0,0.1,t_c,0.4\}$ where $t_c=0.216$ is the predicted time marking the onset of the DSW, see~\cite{grava2007numerical}. In Figure \ref{fig:sech-b}, we compare $p^\alpha$ and $u^\alpha_x$ in the DSW region at the final time $t=0.4$. Then, we also compare the numerical solution at the latter time with the corresponding Whitham envelope for the solution to the OIVP \eqref{eq:IVP_dsw_sech} in Figure \ref{fig:sech-c}, which shows excellent agreement. Finally, the space-time evolution of the DSW region is summarized in the $x-t$ diagram of Figure \ref{fig:sech-d}, superimposed with the asymptotic $(x,t)$ coordinates of the DSW edges. All these results show an excellent performance of the method, since the resulting numerical solution to the AIVP is seemingly indistinguishable from the exact solution of the  OIVP \eqref{eq:IVP_dsw_sech}, in terms of space-time evolution but also regarding the dispersion effects in a test case where they are not negligible.        
\begin{figure}[!h]
    \centering
    \begin{subfigure}{0.5\textwidth}
        \centering
        \includegraphics[width=\linewidth]{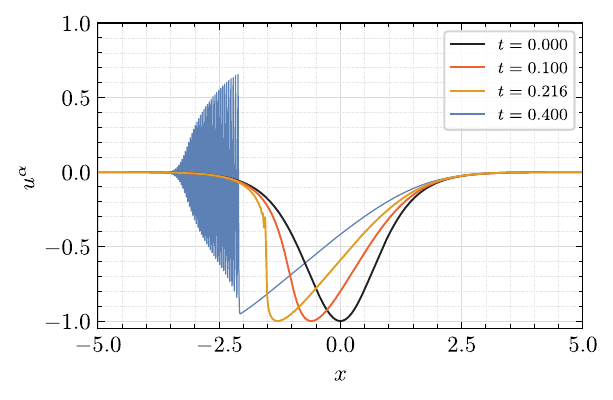}
        \caption{Numerical solution $u$ at $t\in\{0,0.1,0.216,0.4\}$.}
                    \label{fig:sech-a} 
    \end{subfigure}%
    \begin{subfigure}{0.5\textwidth}
        \centering
        \includegraphics[width=\linewidth]{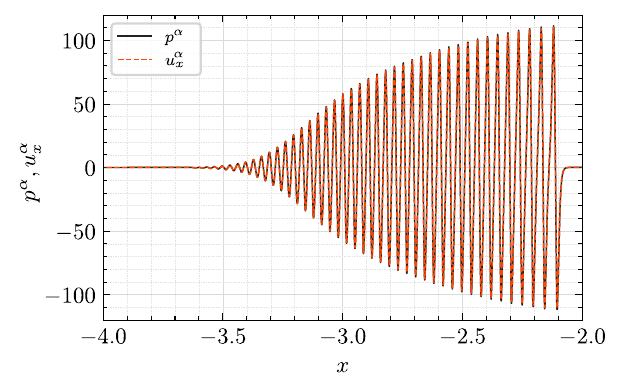}
                \caption{$p^\alpha$ and $u_x$ in the DSW region at $t=0.4$.}
                \label{fig:sech-b}
    \end{subfigure}

    \begin{subfigure}{0.5\textwidth}
        \centering
        \includegraphics[width=\linewidth]{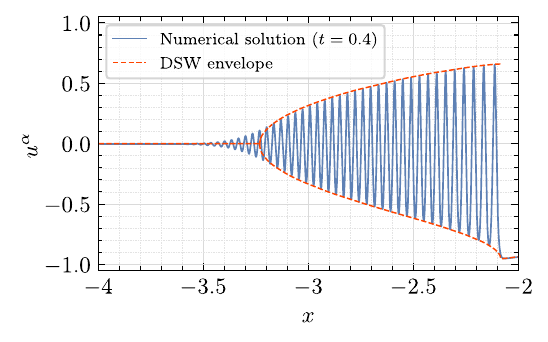}
                \caption{Comparison of the numerical solution with \\the Whitham envelope for the OIVP \eqref{eq:IVP_dsw_sech}.}
                \label{fig:sech-c}
    \end{subfigure}%
    \begin{subfigure}{0.5\textwidth}
        \centering
        \includegraphics[width=1.1\linewidth]{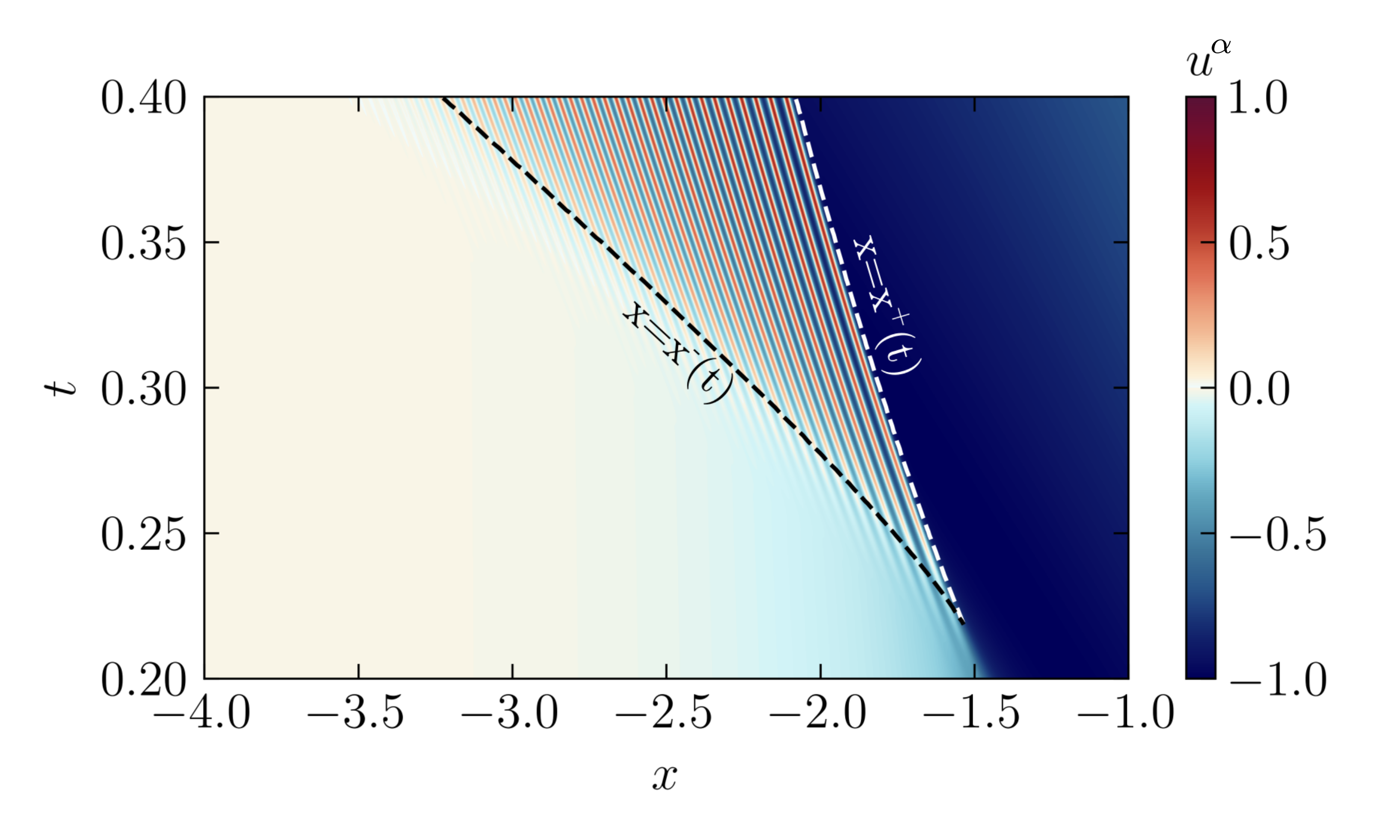}
                \caption{$x-t$ diagram of the numerical solution at the onset of the DSW formation.}
                \label{fig:sech-d}
    \end{subfigure}
\caption{Numerical results for the AIVP corresponding to \eqref{eq:IVP_dsw_sech}. The parameters are $\gamma=-10^{-4}$, $\alpha=10^3$, $\beta=10^{-7}$. The computational domain is $I_c=[-5,5]$ with $N=8000$ cells, and the final time is $T=0.4$. Periodic boundary conditions are used. }
    \label{fig:DSW_sech}
\end{figure}
\subsubsection{Riemann Problem DSW (KdV equation)}
\label{test:DSW}
The OIVP is given by 
\begin{equation}
\begin{cases}
u_t + uu_x  = \gamma u_{xxx}, \\
u(x,0) = \begin{cases}
            1 \quad \text{if} \quad x <0, \\
            0 \quad \text{if} \quad x\ge 0. 
         \end{cases} 
    \end{cases}
    \label{eq:IVP_RPDSW}
\end{equation} 
The computational domain is $I_c=[-8,2]$ with $N=10000$ cells. We take $\gamma=-10^{-4}$. The values of the relaxation parameters are $\alpha =10^3$ and $\beta = 10^{-7}$. The final time is $T=3$. With $\gamma\ne0$, the initial discontinuity develops a DSW (or undular bore). The derivative of the initial data in the sense of distributions is a minus Dirac delta function, which does not belong to ${L}^{2}(I)$. The total energy~\eqref{convex-entropy} would be formally infinite should \(p\) be initialized as \(p = u_{0,x}\) for the AIVP. Instead, we consider an analytic approximation of the initial step function given by 
\begin{equation*}
\tilde{u}_0(x) = \hlf \prn{1-\mathrm{tanh}\prn{x/\omega}}, \qquad \omega=10^{-3}.
\end{equation*}
The asymptotic solution to the OIVP  (for $t\to+\infty$) was studied in \cite{gurevich1973nonstationary} and is given by 
\begin{equation}
u(x,t)
 = 2\,\mathrm{dn}^2\!\left(
   \frac{1}{\sqrt{6}} 
   \left( x - \frac{1 + s^2}{3} t \right),
   s
 \right)
 - (1 - s^2),
 \label{eq:elliptic_sol}
\end{equation}
where $\mathrm{dn}$ is the Jacobi delta amplitude function and $s$ is the elliptic modulus, and which obeys the implicit equation
\begin{equation}
\frac{1 + s^2}{3}
 - \frac{2}{3}\,
   \frac{s^2(1 - s^2)\mathrm{K}(s)}
        {\mathrm{E}(s) - (1 - s^2)\mathrm{K}(s)}
 = \frac{x}{t},
 \label{eq:DSW_tau}
\end{equation}
with $\mathrm{K}$ and $\mathrm{E}$, the complete elliptic integrals of first and second kind, respectively. The asymptotic bounds of the DSW region are then determined by taking the limits $s\to0$ and $s\to1$ in \eqref{eq:DSW_tau} and which yields \cite{gurevich1973nonstationary}
\begin{equation*}
\frac{x}{t} = \tau^- = -1, \quad \frac{x}{t} = \tau^+ = \frac{2}{3}.
\end{equation*}
Upon computing $s$ as a function of $x/t$ in the DSW region, the local extrema of \eqref{eq:elliptic_sol} define the upper and lower asymptotic envelopes $\mathrm{A}_{\pm}(x/t)$ defined as
\begin{equation}
\mathrm{A}_\pm(x/t) = 1 \pm s^2(x/t).
\end{equation}
The numerical results are given in Figure \ref{fig:RPDSW}. The comparison shows that both the amplitude of the oscillations as well as their position and velocity match well with the asymptotic values of the original equation, confirming once again the quality of the approximation. Given the structure of the solution, the vanishing oscillations propagating from the trailing edge of the DSW to the leftmost boundary are usually a source of concern from the numerical point of view especially when the boundary conditions are not completely transparent \cite{besse2022perfectly}. 
\begin{figure}[!h]
    \centering
    \begin{subfigure}{0.5\textwidth}
        \centering
        \includegraphics[width=\linewidth]{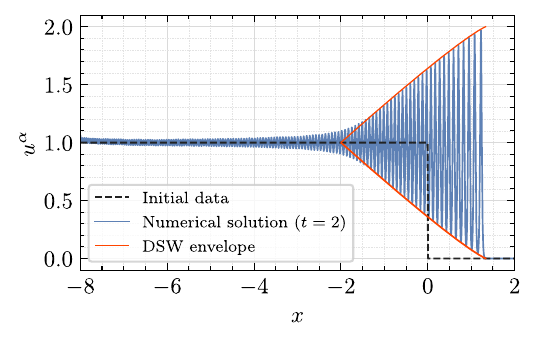} 
    \end{subfigure}%
    \begin{subfigure}{0.5\textwidth}
        \centering
        \includegraphics[width=1.12\linewidth]{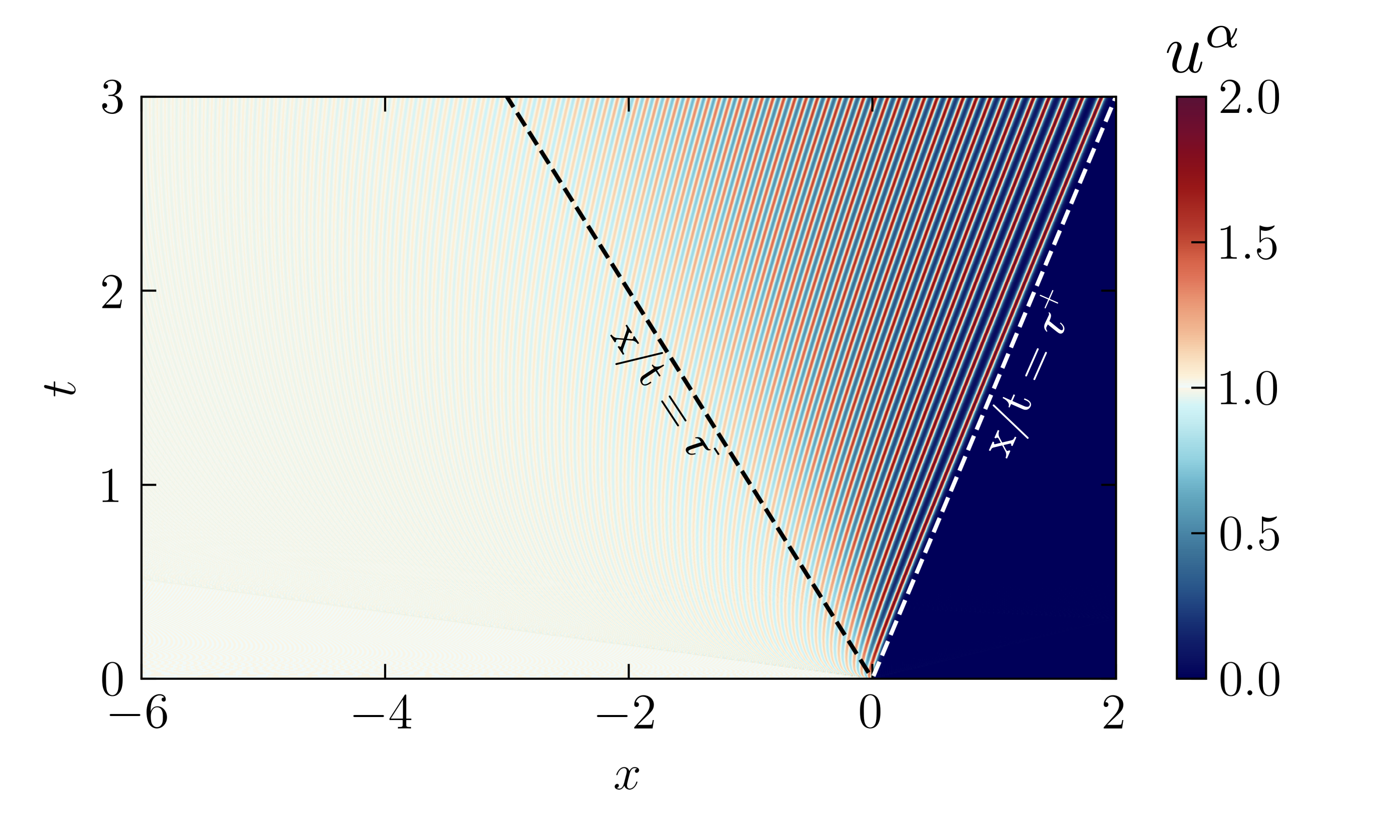}
    \end{subfigure}
\caption{Numerical results for the AIVP corresponding to \eqref{eq:IVP_dsw_sech}. Left: Numerical solution at $t=2$ compared with the asymptotic envelopes $\mathrm{A}_\pm$. Right: $x-t$ diagram of the numerical solution, superimposed with the asymptotic DSW bounds. The parameters are $\gamma=-10^{-4}$, $\alpha=10^3$, $\beta=10^{-7}$. The computational domain is $I_c=[-8,2]$ with $N=10000$ cells, and the simulation is run up to $T=3$. Pseudo Neumann boundary conditions are used.}
    \label{fig:RPDSW}
\end{figure}
Nevertheless, the numerical solution seems to perform well in that regard, even by simply implementing pseudo-Neumann boundary conditions. To further highlight the relevancy of this choice, we compare this same test case using pseudo-Neumann boundary conditions against classic Neumann boundary conditions, for which $p_x(x_L,t) = p_x(x_R,t) =0$. The comparison is reported on Figure \ref{fig:RPDSW_BC} and shows that classic Neumann boundary conditions fail to maintain the correct structure of the solution near the boundary. The consequences of this erroneous choice are visibly more significant on the variable $u\rel$ than on $p\rel$ itself.  
\begin{figure}[!h]
    \centering
    \begin{subfigure}{0.5\textwidth}
        \centering
        \includegraphics[width=\linewidth]{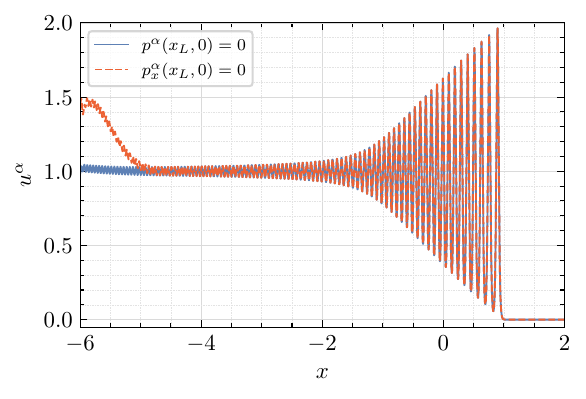} 
    \end{subfigure}%
    \begin{subfigure}{0.5\textwidth}
        \centering
        \includegraphics[width=1.05\linewidth]{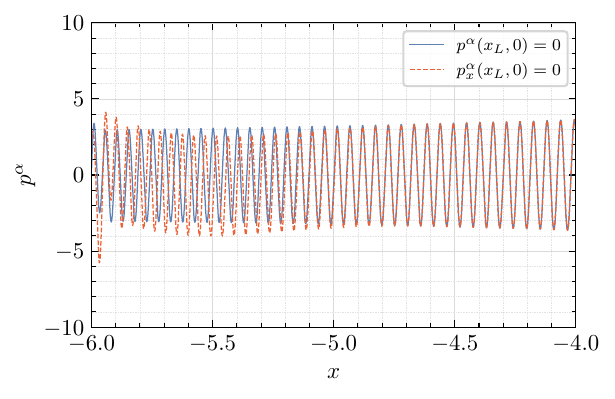}
    \end{subfigure}
\caption{Comparison of pseudo Neumann boundary conditions \eqref{eq:PNBC} with classic Neumann boundary conditions for which $p_x(x_L,t) = p_x(x_R,t) =0$, for the DSW test case. The computational domain is $I_c=[-6,2]$ with $N=10000$ cells. The time is $T=1.5$. The rest of the parameters are $\alpha=10^3$, $\beta=10^{-8}$, $\gamma=-10^{-4}$.}
    \label{fig:RPDSW_BC}
\end{figure}

\subsubsection{Traveling wave front of KdV-Burgers\label{num:KdV}}
The OIVP is given by 
\begin{equation}
\begin{cases}
u_t + uu_x  = \varepsilon u_{xx}  + \gamma u_{xxx} , \\
u(x,0) = u_0(x) = \frac{-3\varepsilon^2}{25
   \gamma }\prn{2 + 2 \tanh \prn{\frac{\varepsilon x}{10\gamma} }+ \mathrm{sech}^2\prn{\frac{\varepsilon x}{10\gamma} }}.
    \end{cases}
    \label{eq:IVP_sechtanh}
\end{equation}
The exact solution to this IVP is a traveling wave $u_0(x-Vt)$ where $V=-\frac{6 \varepsilon^2}{25\gamma}$. We take for this test case $I_c = [-3,1]$, discretized over $N=2000$ computational cells. We take $\gamma=10^{-4}$, $\varepsilon=10^{-2}$ and the values of the relaxation parameters are $\alpha=2\times10^3$ and $\beta=10^{-6}$.
The final time is $T=10$.
Pseudo Neumann boundary conditions \eqref{eq:PNBC} are used. 
The numerical results are given in Figure \ref{fig:sechtanh}. In particular, we show the numerical solution $u\rel$ at different times as well as the time evolution of $E\rel$. To validate the decay rate of the solution, we compared it with the reference decay law \eqref{parabolic_estimate}, integrated over the computational domain and in time using an explicit Euler scheme. 
\begin{figure}[!h]
    \centering
    \begin{subfigure}{0.5\textwidth}
        \centering
        \includegraphics[width=\linewidth]{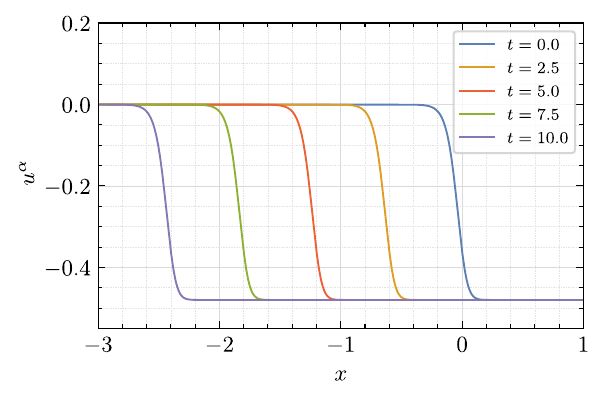}
    \end{subfigure}%
    \begin{subfigure}{0.5\textwidth}
        \centering
        \includegraphics[width=1\linewidth]{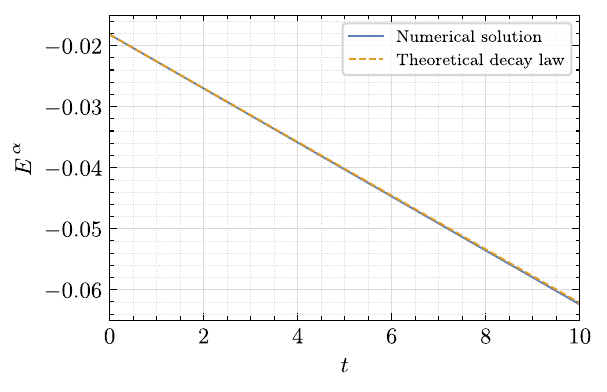}
    \end{subfigure}
    \caption{Left: The numerical solution to the AIVP corresgivenponding to \eqref{eq:IVP_sechtanh} at different times. Right: the evolution of the energy computed from the numerical solution, compared with the reference decay law, computed by solving the ODE in time. The parameters are $N=2000, \gamma=10^{-4}, \varepsilon=10^{-2}, \alpha=2\times10^3, \beta=10^{-6}$ and the final time is $T=10$. }
    \label{fig:sechtanh}
\end{figure}
\begin{figure}[h!]
        \begin{subfigure}{0.5\textwidth}
        \centering
        \includegraphics[width=\linewidth]{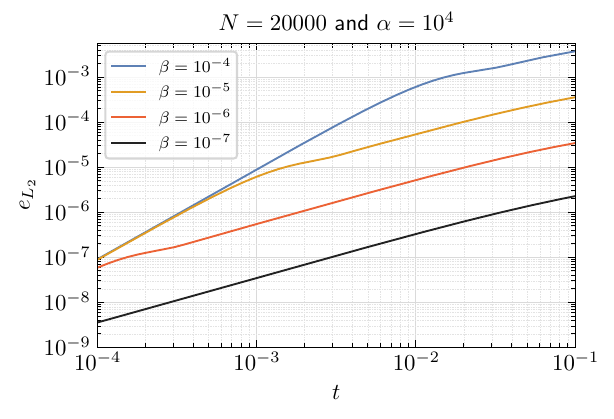}
    \end{subfigure}%
    \begin{subfigure}{0.5\textwidth}
        \centering
        \includegraphics[width=1\linewidth]{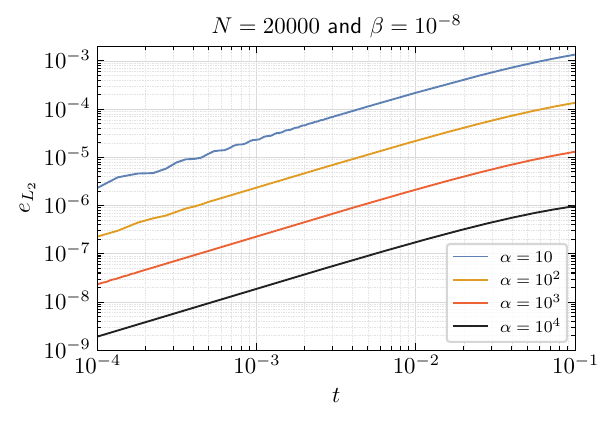}
    \end{subfigure}
\caption{Left: $L^2$-error as a function of time at fixed $\alpha=10^4$ with variable $\beta$. Right: $L^2$-error as a function of time at fixed $\beta$ with variable $\alpha$. Other parameters are $N=20000, \gamma=10^{-4}, \varepsilon=10^{-2}$ and $T=0.1$. } 
    \label{fig:sechtanh_relaxation}
\end{figure}
In Figure \ref{fig:sechtanh_relaxation}, we display errors for different pairings of the approximation parameters $\alpha$ and $\beta$. The error 
grows linearly in time as expected. For  fixed $\alpha$ or $\beta$ we observe a clear error decay if $\beta$ tends to zero or $\alpha$ increases, respectively. 
\begin{figure}[h!]
    \centering
    \begin{subfigure}{0.32\textwidth}
        \includegraphics[width=\textwidth]{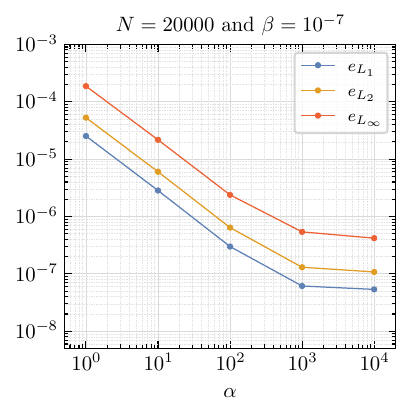}
    \end{subfigure}
        \begin{subfigure}{0.32\textwidth}
        \includegraphics[width=\textwidth]{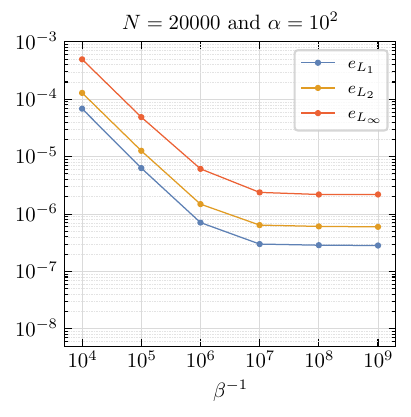}
    \end{subfigure}
        \begin{subfigure}{0.32\textwidth}
        \includegraphics[width=\textwidth]{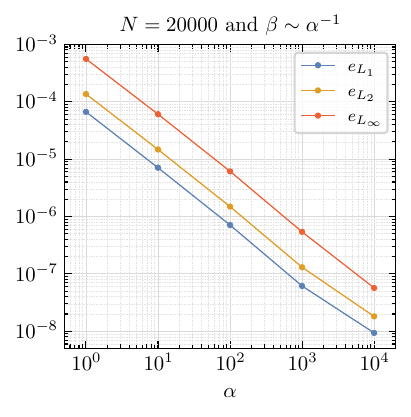}
    \end{subfigure}
    \caption{Convergence plots in different error norms. The choice of parameters is as in Figure \ref{fig:sechtanh_relaxation} above. Left: Error decay and saturation for for fixed $\beta$ and $\alpha \to \infty$. Middle:  Error decay and saturation for for fixed $\beta$ and $\alpha \to \infty$. Right: Linear error decay for $\beta = \alpha^{-1}$ and $\alpha \to \infty$.}
    \label{fig:placeholder}
\end{figure}

The plots in  Figure \ref{fig:placeholder} support our analytical findings from Theorem \ref{convergence_ratesdd}. As expected,  to decrease the error, both parameters $\alpha$ and $\beta$  have to be tuned. The linear error 
decay that can be observed from the right plot in Figure \ref{fig:placeholder} is  computed using the scaling $\beta = \gamma\alpha^{-1}$, which corresponds to the scaling $\beta\sim\alpha^{-1}$
from Theorem \ref{convergence_ratesdd}. In Table \ref{table:eoc}, we display corresponding numerical quantities from which we compute the 
experimental order of convergence  $O_{L^2}$.

\begin{table}[H]
\centering
\begin{tabular}{|ccc|ccc|ccc|}
\hline
\multicolumn{3}{|c|}{Fixed $\beta$, variable $\alpha$} &
\multicolumn{3}{c|}{Fixed $\alpha$, variable $\beta$} &
\multicolumn{3}{c|}{Scaled values $\beta\sim\alpha^{-1}$} \\
\hline
$\alpha$ & $e_{L^2}$ & $O_{L^2}$ &
$\beta$ & $e_{L^2}$ & $O_{L^2}$ &
$\alpha$ & $e_{L^2}$ & $O_{L^2}$ \\
\hline
$10^0$     & $5.29 \times 10^{-5}$  & --    & $10^{-5}$ & $1.27 \times 10^{-5}$ & --    & $10^0$     & $1.36 \times 10^{-4}$ & -- \\
$10^1$    & $6.03 \times 10^{-6}$  & 0.94  & $10^{-6}$ & $1.49 \times 10^{-6}$ & 0.93  & $10^1$    & $1.47 \times 10^{-5}$ & 0.97 \\
$10^2$   & $6.42 \times 10^{-7}$  & 0.97  & $10^{-7}$ & $6.42 \times 10^{-7}$ & 0.37  & $10^3$   & $1.49 \times 10^{-6}$ & 1.00 \\
$10^4$  & $1.31 \times 10^{-7}$  & 0.69  & $10^{-8}$ & $6.09 \times 10^{-7}$ & 0.02  & $10^3$  & $1.31 \times 10^{-7}$ & 1.06 \\
$10^4$ & $1.08 \times 10^{-7}$  & 0.08  & $10^{-9}$ & $5.98 \times 10^{-7}$ & 0.01  & $10^4$ & $1.82 \times 10^{-8}$ & 0.86 \\
\hline
\end{tabular}
\caption{Experimental order of convergence for the setting as in Figure \ref{fig:placeholder}. The right column clearly displays a first-order convergence whereas we could prove in Theorem \ref{theo:dd} only square-root behaviour.   \label{table:eoc}}
\end{table}

\subsubsection{Undercompressive shock  wave(modified KdV-Burgers)}
The next OIVP is given  for $\gamma >0 $ by 
\begin{equation}
\begin{cases}
u_t + u^2u_x  = \varepsilon u_{xx} - \gamma u_{xxx}, \\
u(x,0) = u_0(x) = \frac{1}{2}\prn{ u_+ + u_- - \abs{u_+-u_-} \tanh\prn{A x/\sqrt{\gamma}}},
    \end{cases}
    \label{eq:IVP_mkdV}
\end{equation}
where 
\begin{equation*}
u_- =  -\sqrt{\frac{2\varepsilon^2}{\gamma}}, \quad u_+ = -u_- -\, \sqrt{\frac{2\varepsilon^2}{9\gamma}}, \quad A =\frac{u_- - u_+}{2\sqrt{2}}. 
\end{equation*}
For the given values of the solution parameters, this corresponds to a traveling wave solution $u(x,t) = u_0(x-Vt)$, of velocity $V=\prn{u_+^3-u_-^3}/\prn{u_+ - u_-}$\cite{el2017dispersive} and consists in a transition from $u_-$ to $u_+$, occurring over a region of space of a length scaling with $\sqrt{\gamma}$. We consider for this test case a sharp transition by setting $\gamma=10^{-5}$. The computational domain is $I_c = [-0.1,2]$, discretized over $N =10000$ cells. We set $\varepsilon =10^{-2}$. Note that for these values of the parameters, we have approximately $u_- \simeq -4.47$, $u_+ \simeq 3.97$, $A \simeq -2.99 $ and  $V\simeq 18.02$. The numerical results are gathered in Figure~\ref{fig:mKdV_tanh}. Besides, we show here the influence of the relaxation parameters on the solution, and we consider values of $\alpha$ ranging from $100$ to $2500$ and values of $\beta$ from $10^{-6}$ down to $10^{-11}$ and for which the results are shown on Figure~\ref{fig:mKdV_tanh_rel}.  \\
We clearly observe that the first-order approximate system is able to display the undercompressive shock wave in the underlying cubic conservation 
law.  The numerical computation of undercompressive shock waves is difficult
because  its precise form depends sensitively on the ratio between diffusive and dispersive parameters, see \cite{el2017dispersive}. These can hardly be 
controlled in numerical schemes unless the shock layer is resolved by the mesh \cite{LR00}. Our new hyperbolic-parabolic approximation \eqref{hyperbolic_parabolic_system}  allows for stable computations even for underresolved wave structures.

\begin{figure}[h!]
    \centering
    \begin{subfigure}{0.5\textwidth}
    \includegraphics[width=\linewidth]{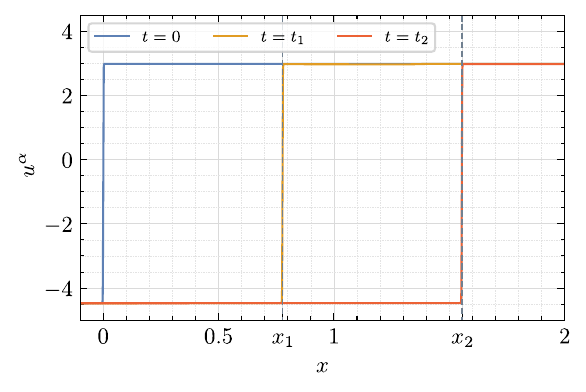}
    \end{subfigure}
    \begin{subfigure}{0.49\textwidth}
    \includegraphics[width=\linewidth]{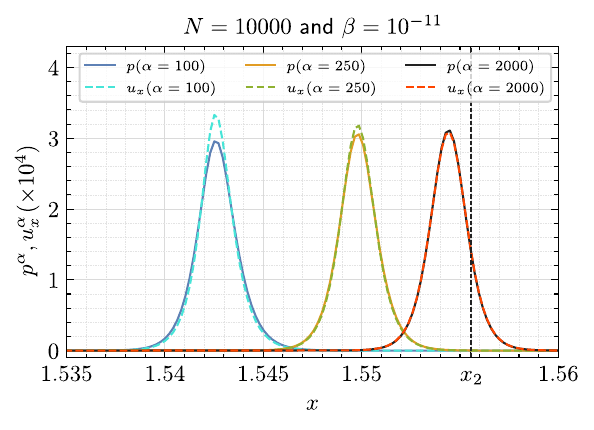}
    \end{subfigure}
    \caption{Left: Numerical solution for $u^\alpha$ at times $t_0 = 0$, $t_1 = 0.05$, and $t_2 = 0.1$, illustrating the evolution of the undercompressive shock. $x_{1,2}=Vt_{1,2}$ are the theoretical positions of the shock at $t=t_{1,2}$. Right: Comparison between $p^\alpha$ and the finite-difference approximation $u^\alpha_x$ at $t_2 = 0.1$ for several values of $\alpha$. 
}
    \label{fig:mKdV_tanh}
\end{figure}
\begin{figure}[h!] 
    \begin{subfigure}{0.5\textwidth}
        \centering
        \includegraphics[width=\linewidth]{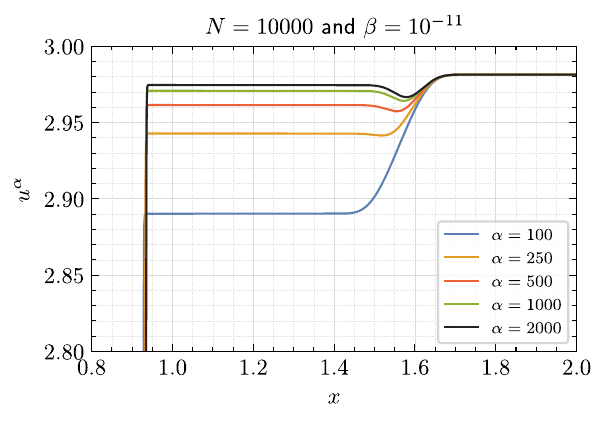}
    \end{subfigure}%
            \begin{subfigure}{0.5\textwidth}
        \centering
        \includegraphics[width=\linewidth]{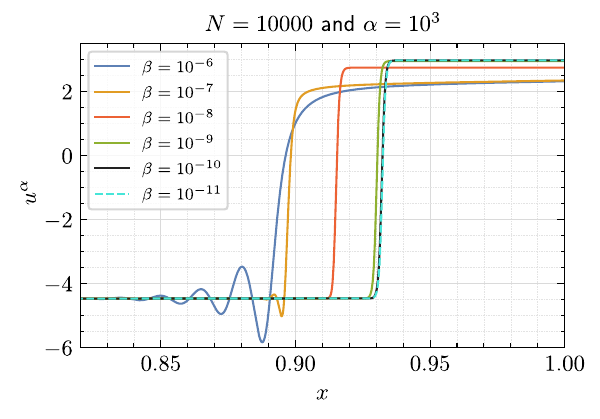}
    \end{subfigure}%

    \caption{Left:  Zoom at the trailing edge of the wave. For $\alpha \to \infty$ we observe convergence to the limit traveling wave. For smaller values of $\alpha$, another rarefaction wave gets visible. Note that the approximate system \eqref{hyperbolic_parabolic_system} may trigger up to four elementary waves.  Right: for fixed value of $\alpha$ and $\beta \to 0$, we observe oscillations and spurious wave speeds for larger values of  $\beta$ which are eliminated with $\beta$ vanishing.}
    \label{fig:mKdV_tanh_rel}
\end{figure}
The results show that different values of the relaxation parameters exhibit different solution structures, but ultimately show convergence towards the solution of the AIVP. Given that the approximate system is a $(4\times 4)$-system, the appearance of a rarefaction wave of vanishing amplitude as shown in Figure \ref{fig:mKdV_tanh_rel} (Right) in the limit $\alpha \to \infty$ is not surprising. 
On Figure \ref{fig:mKdV_tanh_rel} (Left), the numerical solution shows more drastic changes in its structure as well as the development of spurious oscillations of non-negligible amplitude for $\beta$ far from the stiff limit. This confirms the fact that both $\alpha$ and $\beta$ have to be scaled carefully in order to correctly capture the original solution's behavior. 
\subsubsection{Dark and bright solitons (Gardner equation)}
As final example, we consider as OIVP  a  Gardner equation  given by 
\begin{equation}
\begin{cases}
u_t + 6(u - k u^2)u_x + u_{xxx} = 0, \\
u(x,0) = u_{d,b}(x),
    \end{cases}
    \label{eq:IVP_gardnersol}
\end{equation}
where the initial conditions $u_d(x)$ and $u_b(x)$, corresponding to dark or bright solitons, respectively, write as \cite{kamchatnov2012undular}
\begin{align*}
& u_d(x) = u_3 - \frac{u_3 - u_2}{
\cosh^{2}\prn{\theta(x)} - \dfrac{u_3 - u_2}{u_3 - u_1}\sinh^{2}\prn{\theta(x)}}, \\
& u_b(x) = u_2 + \frac{u_3 - u_2}{
\cosh^{2}\prn{\theta(x)} - \dfrac{u_3 - u_2}{u_4 - u_2}\sinh^{2}\prn{\theta(x)}
}.
\end{align*}
Here $u_1\le u_2\le u_3 \le u_4$ and $\theta(x) = \frac{1}{2}\sqrt{k(u_3 - u_1)(u_4 - u_2)}x$. The exact solution to \eqref{eq:IVP_gardnersol} is then given by $u_{b,d}(x-V t)$ where the velocity $V$ is given as 
\begin{equation}
V = k \prn{u_1 u_2 + u_1 u_3 + u_1 u_4 + u_2 u_3 + u_2 u_4 + u_3 u_4}.
\end{equation}
These solutions are obtained from a broader class of solutions to the Gardner equations in the limits $u_3\to u_4$ for $u_d$ and $u_2\to u_1$ for $u_b$, see \cite{kamchatnov2012undular}. For the numerical simulation of the associated AIVP, we consider a small parameter $\epsilon>0$, and we take for the dark soliton solution 
\begin{equation*}
    k=1, \quad u_1 = 0, \quad u_2 = \epsilon, \quad u_3 = u_4 = 1-\epsilon,
\end{equation*}
and analogously for the bright soliton 
\begin{equation*}
    k=1, \quad u_1 = u_2 = \epsilon, \quad u_3 = 1-\epsilon, \quad u_4 = 1.
\end{equation*}
This results in table-top-like solitons for $\epsilon \ll1$. For both parameterizations, we take $\epsilon=10^{-4}$ and the value of the traveling wave velocity is $V=1-\epsilon^2\simeq 1$. 
The rest of the parameters for both simulations are as follows. The computational domain $I_c = [-50,50]$ is  discretized over $N=2000$ cells. We then choose $\alpha=10^3$ and $\beta=10^{-6}$. Periodic boundary conditions are used, and we set the final time to $T=500/V$, corresponding to $10$ periods of the solution over the computational domain before returning to its initial state. The numerical results are reported in Figure \ref{fig:gardner_solitons} and underline the validity of our overall approach to approximate diffusive-dispersive equations by extended hyperbolic-parabolic systems.\\

\begin{figure}[h!]
    \centering
    \includegraphics[width=0.49\linewidth]{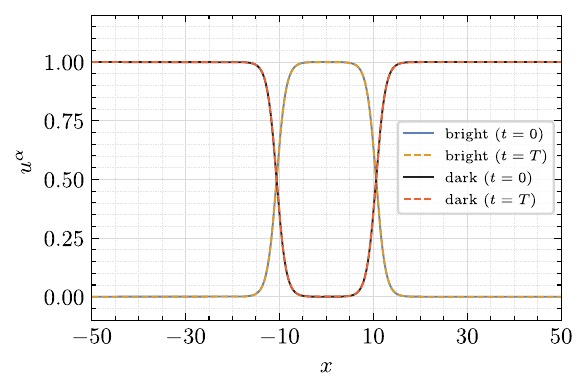}
    \includegraphics[width=0.49\linewidth]{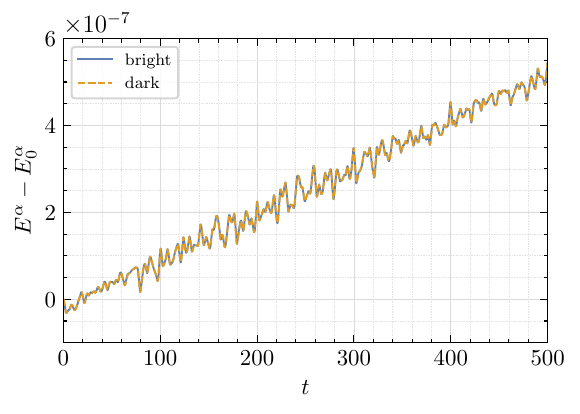}
    \caption{Left: Numerical solutions to the AIVP corresponding to \eqref{eq:IVP_gardnersol} for both bright and dark solitons. Right: Evolution of the total energy error $E\rel-E\rel_0$ over time. $I_c = [-50,50], N=2000, \alpha=10^3, \beta=10^{-6}$ and $T=500/V$.  }
    \label{fig:gardner_solitons}
\end{figure}
\section{Conclusions and future outlook}\label{sec: conclusions}
We have introduced a novel framework for the hyperbolic and hyperbolic-parabolic approximation for a broad class of nonlinear dispersive and diffusive-dispersive equations. 
The hyperbolic approximation \eqref{hyperbolic_system} is constructed such that a complete set of Riemann invariants can be determined. It is shown to have a Hamiltonian structure that is consistent with the limit dispersive equations.
Enforcing the monotonicity on the flux derivative,
we have found that the hyperbolic approximate system \eqref{hyperbolic_system} is equipped with an entropy/entropy flux pair that resembles the energy of the limit dispersive equations.
Moreover, solutions of the hyperbolic–parabolic approximation \eqref{hyperbolic_parabolic_system}
dissipate this entropy. Based on these findings, we have employed the relative entropy framework to prove the convergence of solutions of the approximate systems towards smooth solutions of the dispersive and diffusive-dispersive limit equations.\\
Due to the likewise standard hyperbolic and hyperbolic structure of the approximate systems, it becomes possible to use high-resolution methods from the realm of hyperbolic balance laws for the sake of numerical simulation. 
For a wide range of diffusive-dispersive equations, we display the effectiveness, accuracy, and robustness of a semi-implicit second-order two-step Lax-Wendroff method. In particular, our focus in the numerical examples was to capture the dynamics of non-classical shock waves, including dispersive and undercompressive shock waves, which are very rare to find in the literature. The numerics suggest convergence of our approximate systems for a significantly larger class of fluxes, independent of the nonlinearity of the flux.   
The study refers to dispersive equations with nonconvex energy, such as the KdV equation. 


As mentioned, the proposed first-order system \eqref{hyperbolic_system}  possesses a full set of Riemann invariants. This structure opens the door to possible global existence results for $L^\infty$-initial data via compensated compactness, as well as the development of numerical schemes based on exact Riemann solvers. It would be interesting to analyze whether one can obtain the entropy solution of the hyperbolic system \eqref{hyperbolic_system} from the vanishing diffusion (viscosity) limit of \eqref{hyperbolic_parabolic_system}, i.e., $\varepsilon\rightarrow 0$. In this work, we have chosen the scaling between diffusion and dispersion to be $1$. For the alternative scaling $\varepsilon^2$, one can also study the diffusion–dispersion limit of the proposed systems as in \cite{corli2012singular, corli2014parabolic}. We leave these issues for future investigation.\\
Furthermore, this approach of relaxing higher-order PDEs via energy-consistent lower-order approximations can also be applied to more complex higher-order PDEs, having, for example, nonlinearities in higher-order terms \cite{barthwal_thinfilm,dhaouadi2025first}. Moreover, this relaxation approach can be readily adapted to coupled multi-dimensional systems such as Navier-Stokes-Korteweg-type systems, where the energy structure of the underlying mechanisms can be crucial in designing accurate and stable approximate systems \cite{dhaouadi2022first,HRYZ,KMR}.\\\\
\textbf{Acknowledgements}~ Funding by the Deutsche Forschungsgemeinschaft (DFG, German Research Foundation) - SPP 2410 Hyperbolic Balance Laws in Fluid Mechanics: Complexity, Scales, Randomness (CoScaRa) is gratefully acknowledged.\\
F.D. acknowledges the support from NextGenerationEU, Azione 247 MUR
Young Researchers – SoE line.\\\\
\textbf{CRediT authorship contribution statement}~\\
{\bf{Rahul Barthwal:}} Modeling, mathematical analysis, investigation, methodology, writing— original draft, review \& editing, {\bf{Firas Dhaouadi:}} modeling, methodology,  numerical simulations, investigation, validation, writing -original draft, review \& editing, {\bf{Christian Rohde:}} investigation, writing-original draft, review \& editing, supervision. 
\bibliographystyle{mystyle}
\bibliography{references}

\end{document}